\numberwithin{equation}{subsection}
\theoremstyle{plain} 
\newtheorem{theorem}{Theorem}
\numberwithin{theorem}{subsection}
\newtheorem*{theorem*}{Theorem}
\newtheorem*{notationsimplyfy}{Simplification of Notations}
\newtheorem{prop}[theorem]{Proposition}
\newtheorem{lemma}[theorem]{Lemma}
\theoremstyle{definition}
\newtheorem{definition}[theorem]{Definition}
\theoremstyle{remark}
\newtheorem{remark}[theorem]{Remark}
\theoremstyle{definition}
\newcommand{\bbC}{\mathbb{C}}
\newcommand{\Q}{\mathbb{Q}}
\newcommand{\Z}{\mathbb{Z}}
\newcommand{\cA}{\mathcal{A}}
\newcommand{\cB}{\mathcal{B}}
\newcommand{\cC}{\mathcal{C}}
\newcommand{\cF}{\mathcal{F}}
\newcommand{\cH}{\mathcal{H}}
\newcommand{\cK}{\mathcal{K}}
\newcommand{\cL}{\mathcal{L}}
\newcommand{\cM}{\mathcal{M}}
\newcommand{\cN}{\mathcal{N}}
\newcommand{\cO}{\mathscr{O}}
\newcommand{\cQ}{\mathcal{Q}}
\newcommand{\cT}{\mathcal{T}}
\newcommand{\rarr}{\longrightarrow}
\newcommand{\sF}{\mathscr{F}}
\newcommand{\fM}{\mathfrak{M}}
\newcommand{\sO}{\mathscr{O}}
\newcommand{\sA}{\mathscr{A}}
\newcommand{\sD}{\mathscr{D}}
\newcommand{\Var}{\textup{Var}}
\newcommand{\Sym}{\textup{Sym}}
\newcommand{\Ker}{\textup{Ker}}
\newcommand{\DR}{\textup{DR}}
\newcommand{\Gr}{\textup{Gr}}
\DeclareMathOperator{\codim} {codim}
\DeclareMathOperator{\red}{red}
\mathchardef\ordinarycolon\mathcode`\:
\title{Isotriviality of smooth families of varieties of general type}
\author{Chuanhao Wei}
\address{Department of Mathematics, Stony Brook University, 100 Nicolls Rd, Stony Brook, NY 11794, USA}
\email{\tt chuanhao.wei@stonybrook.edu}
\author{Lei Wu}
\address{Department of Mathematics, University of Utah,
155 S 1400 E,  Salt Lake City, UT 84112, USA}
\email{\tt lwu@math.utah.edu}
\begin{document}

\begin{abstract}
In this paper, we proved that a log smooth family of log general type klt pairs with a special (in the sense of Campana) quasi-projective base is isotrivial. As a consequence, we proved the generalized Kebekus-Kov\'acs conjecture \cite[Conjecture 1.1]{WW19}, for smooth families of general type varieties as well as log smooth families of log canonical pairs of log general type, assuming the existence of relative good minimal models.
\end{abstract}

\maketitle
\section{introduction}

We start with $f_V:(U,D^U)\to V$, a log smooth family of projective log pairs of log general type, over a smooth quasi-projective variety $V$, with the coefficients of $D^U$ are in $[0,1)$. Here, log smoothness of $f_V$ means that $f_V$ and each stratum of $D^U$ is smooth over $V$ (see \S \ref{subsec:3.1}). Due to \cite{BCHM}, $f_V$ has a relative canonical model, denoted by $f_V^c$. The relative canonical model is a stable family by the invariance of log plurigenera \cite[Theorem 4.2]{HMX18}, and hence it induces a moduli map $\mu:V\to \mathfrak{M}$, where $\mathfrak{M}$ is the coarse moduli space of the corresponding stable family (with a fixed set of coefficients) as in \cite[\S6]{KP16}. Note that $\mathfrak{M}$ is a projective variety, \cite[Theorem 1.1]{KP16}. 

Following \cite[Definition 6.16]{KP16}, we denote the variation of $f_V$ by 
$$\Var(f_V)=\Var(f^c_V):=\dim(\mu(V)).$$ 
We say that $f_V$ is of maximal variation if $\Var(f_V)= \dim V$ and that $f_V$ is isotrivial if $\Var(f_V)=0$.
Since fibers of $f_V$ are of log general type, the above definition is compatible with the variation defined in \cite{Vie83} and \cite{Kaw85}. We write the log Kodaira dimension of $V$ by $\bar\kappa(V)=\kappa(Y, E^Y)$, where the latter is defined to be the Iitaka dimension of the log canonical sheaf $\omega_Y(E^Y)$ and  
$Y$ is a projective compactification of $V$ with $E^Y=Y\setminus U$ a normal crossing divisor. We call $(Y, E^Y)$ a log smooth compactification of $V$.
Note that $\bar\kappa(V)$ does not depend on the choice of $(Y, E^Y)$ and it is a birational invariant, that is, $\bar\kappa(V)$ remains the same, if one replaces $V$ by a smooth birational model. In this paper, we prove
\begin{theorem}\label{T:KK conjecture}
With notations above, we have:
\begin{enumerate}
    \item if $\bar\kappa(V)>-\infty$, then $\bar\kappa(V)\geq \Var(f_V),$
    \item if $\bar\kappa(V)=-\infty$, then $\dim V> \Var(f_V)$.
\end{enumerate}
\end{theorem}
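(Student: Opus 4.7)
The plan is to derive Theorem \ref{T:KK conjecture} from the paper's main isotriviality result -- that a log smooth family of log general type klt pairs over a \emph{special} (in Campana's sense) quasi-projective base is isotrivial -- by reducing through Campana's core fibration. In both cases, the moduli map $\mu:V\to\fM$ will be shown to factor through a fibration whose base has bounded dimension; this bounds $\Var(f_V)$, and the two claims will follow from the geometric meaning of that base.

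After a birational modification compatible with $(U,D^U)$, I would invoke Campana's core map $c:V\to C$. Its very general fibers are special, and the base, equipped with the orbifold divisor $\Delta_C$ recording multiple fibers of $c$, is of log general type: $\kappa(C,K_C+\Delta_C)=\dim C$. Fix a log smooth compactification $(Y,E^Y)$ of $V$ and a simultaneous compactification of $c$ with $C$ smooth projective. Now apply the main theorem to the restriction of $f_V$ over a very general fiber $F$ of $c$: since $F$ is smooth, quasi-projective and special, the family $f_V|_F$ is a log smooth family of log general type klt pairs over a special base, hence isotrivial. Therefore $\mu$ is constant on very general fibers of $c$, and since $\fM$ is projective and separated, a standard rigidity argument lets $\mu$ descend to a morphism $\mu_C:C\to\fM$ with $\mu=\mu_C\circ c$. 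This yields
$$\Var(f_V)=\dim\mu(V)\ \leq\ \dim C.$$

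For part (1), where $\bar\kappa(V)\geq 0$, I would invoke the easy direction of the orbifold Iitaka inequality for the core fibration (due to Campana), namely $\bar\kappa(V)\geq \kappa(C,K_C+\Delta_C)$, which combined with $\kappa(C,K_C+\Delta_C)=\dim C$ gives $\bar\kappa(V)\geq\dim C\geq\Var(f_V)$. For part (2), where $\bar\kappa(V)=-\infty$, note that $V$ is not of log general type; since Campana's core is birational to $V$ if and only if $V$ is itself of log general type, we must have $\dim C<\dim V$, hence $\Var(f_V)\leq \dim C<\dim V$. (In the subcase $V$ special, one already gets $C$ a point and $\Var(f_V)=0$ directly from the main theorem.)

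The principal obstacle is the factorization step: establishing that the isotriviality obtained fiber by fiber upgrades to an algebraic factorization $\mu=\mu_C\circ c$, which requires care because the restricted family over each $F$ need only have log smooth and klt structure after verifying that log smoothness of $f_V$ restricts to very general fibers of $c$, and because the rigidity/extension of the coarse moduli map through the fibration must use projectivity of $\fM$. The other nontrivial ingredient is the orbifold Iitaka-type inequality $\bar\kappa(V)\geq \kappa(C,K_C+\Delta_C)$ for the core, which is where the orbifold formulation of $(C,\Delta_C)$ and Campana's weak positivity for $c_*\omega_{V/C}^{\otimes m}(\text{log})$ enter the argument.
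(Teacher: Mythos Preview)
Your approach is correct but takes a genuinely different route from the paper. The paper does not pass through the core fibration at all: it works directly with the Stein factorization $s:(Y,E^Y)\to M$ of the extended moduli map, proves (this is the \emph{first} statement of Theorem~\ref{T:isotri}) that $s$ is a fibration of general type in Campana's sense, and then applies Campana's additivity of log Kodaira dimension for general-type fibrations \cite[Theorem~6.3]{Ca11} to $s$ itself. Since $\dim M=\Var(f_V)$ by construction, both parts of Theorem~\ref{T:KK conjecture} drop out in one line.

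Your route instead uses the \emph{second} statement of Theorem~\ref{T:isotri} (isotriviality over special bases) on very general fibers of the core, then argues that $\mu$ factors through $c$ to get $\Var(f_V)\le\dim C$, and finally appeals to the same Campana additivity, now for the core fibration, to bound $\dim C$ by $\bar\kappa(V)$. The factorization step is fine (the dimension bound $\dim\mu(V)\le\dim C$ needs only the image of $(\mu,c)$ in $\fM\times C$ to have zero-dimensional fibers over very general points of $C$, which semicontinuity upgrades to general points), and your part~(2) is correct once one observes that $\dim C=\dim V$ forces $(Y,E^Y)$ to be of log general type. One caveat: the inequality $\bar\kappa(V)\ge\kappa(C,K_C+\Delta_C)$ is not the ``easy direction'' of Iitaka---it is exactly the hard additivity for general-type orbifold base, as you yourself note at the end. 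The cost of your approach is that it invokes the existence and structural properties of the core map, a deep theorem of Campana in its own right, on top of the same additivity theorem the paper already uses; what it buys is the conceptually pleasing statement that the moduli map factors through the core. The paper's argument is strictly shorter because the moduli fibration is already the right object to feed into additivity.
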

When $D^U$ is empty, we particularly have:
\begin{theorem}\label{T:KK conjecture nonpair}
Let $f_V:U\to V$ be a smooth family of projective varieties of general type. Then,
\begin{enumerate}
    \item if $\bar\kappa(V)>-\infty$, then $\bar\kappa(V)\geq \Var(f_V),$
    \item if $\bar\kappa(V)=-\infty$, then $\dim V> \Var(f_V)$.
\end{enumerate}
\end{theorem}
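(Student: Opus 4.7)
The plan is straightforward: Theorem \ref{T:KK conjecture nonpair} is the special case of Theorem \ref{T:KK conjecture} obtained by taking $D^U = \emptyset$. I would therefore simply verify that the hypotheses of the two statements line up and then invoke Theorem \ref{T:KK conjecture}. The coefficient condition in $[0,1)$ on the boundary is vacuous when $D^U$ is empty; the log smoothness requirement on $f_V$---that both $f_V$ and each stratum of $D^U$ be smooth over $V$---reduces to the smoothness of $f_V:U\to V$, which is exactly the hypothesis of Theorem \ref{T:KK conjecture nonpair}; and a smooth projective variety of general type is tautologically a klt pair of log general type with trivial boundary, since smoothness forces klt singularities and bigness of $\omega_X$ is exactly what ``log general type'' means when the boundary is zero.

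With the hypotheses matched, the notion of variation $\Var(f_V)$ and the log Kodaira dimension $\bar\kappa(V)$ used in the two statements coincide, since both are defined purely in terms of $V$, its log smooth compactification, and the moduli map associated to the relative canonical model of $f_V$, independently of any boundary. Consequently, each of the two numerical conclusions in Theorem \ref{T:KK conjecture nonpair} is literally the corresponding conclusion of Theorem \ref{T:KK conjecture} applied to $(f_V, D^U = \emptyset)$. There is no technical obstacle to overcome: the proof reduces to a one-line appeal to Theorem \ref{T:KK conjecture}, and the only ``work'' is the bookkeeping just described to confirm that the setup of the non-pair theorem is a genuine instance of the pair-theoretic setup.
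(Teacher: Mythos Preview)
Your proposal is correct and matches the paper's treatment: the paper introduces Theorem \ref{T:KK conjecture nonpair} with the phrase ``When $D^U$ is empty, we particularly have,'' presenting it as the immediate specialization of Theorem \ref{T:KK conjecture} with trivial boundary, and gives no separate argument. Your bookkeeping that the log smoothness, klt, log general type, variation, and log Kodaira dimension notions all reduce correctly when $D^U=\emptyset$ is exactly the verification implicit in that one-line deduction.
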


When $f_V$ is a smooth family of canonical polarized varieties, the above statement is conjectured by Kebekus and Kov\'acs \cite[Conjecture 1.6]{KK08}, which is a natural extension of Viehweg's hyperbolicity conjecture \cite{Vie01}. Meanwhile, Campana made a related conjecture for smooth families of canonical polarized varieties \cite[Conjecture 12.19]{Cam} (see also \cite[Conjecture 1.4]{JK11a}), the Isotriviality Conjecture, which implies the Kebekus-Kov\'acs conjecture. 
The Kebekus-Kov\'acs conjecture for  smooth families of canonical polarized varieties is proved by Taji \cite{Taj16} by proving the Isotriviality Conjecture of Campana.

In the special case that $\Var(f_V)=\dim V,$ Campana's isotriviality conjecture is also known as Viehweg's hyperbolicity conjecture. For smooth families of canonically polarized varieties, it was proved by Campana and P\u{a}un \cite{CP15}. Their result has been extended to the case for smooth families of general type varieties by Popa and Schnell using Hodge modules \cite[Theorem A]{PS17}. They \cite[\S4.3]{PS17} implicitly gave an extension of the Kebekus-Kov\'acs conjecture for families with geometric generic fibers admitting good minimal models (particularly for smooth families of general type varieties). They proved in \emph{loc. cit.} a special case, with additionally assuming an abundance-type conjecture of Campana–Peternell. Based on the Hodge-module construction of Popa and Schnell, we further extended Viehweg's hyperbolicity to smooth families of log general type pairs (see \cite[Theorem A(1)]{WW19}). We then extended the Kebekus-Kov\'acs conjecture to the case for smooth families of log general type pairs (see \cite[Conjecture 1.1]{WW19}). 

%It is then natural to ask whether the Kebekus-Kov\'acs conjecture as well as the Isotriviality Conjecture of Campana holds in the more general setting for smooth families of general type varieties and log general type pairs (see \cite[Conjecture 1.1]{WW19}), which is the motivation of this paper.
Theorem \ref{T:KK conjecture} gives a confirmative answer to the generalized Kebekus-Kov\'acs conjecture. 
It is worth mentioning that the generalized Kebekus-Kov\'acs conjecture even for smooth families of general type varieties was not known before. 

%, as the Isotriviality Conjecture of Campana implies the Kebekus-Kov\'acs conjecture.
There are three reasons why we consider Theorem \ref{T:KK conjecture} (the pair version) instead of Theorem \ref{T:KK conjecture nonpair} (the non-pair version):\\
(1) The statement of Theorem \ref{T:KK conjecture nonpair} is about the log Kodaira dimension of the base, where the log Kodaira dimension is defined by embedding the base into a pair. \\
(2) To prove Theorem \ref{T:KK conjecture nonpair}, we first need to compactify $f_V$
\[
\begin{tikzcd}
U \arrow[r,hook]\arrow[d,"f_V"]& (X,D^X)\arrow[d,"f"]\\
V \arrow[r,hook] & (Y,D^Y),
\end{tikzcd}
\]
and then do stable reductions (see \S \ref{S:Geo const}), where $f$ is a morphism of log smooth pairs.\\
(3) The proof of Theorem \ref{T:KK conjecture nonpair} needs to use Campana's fibrations, but they are naturally defined as morphisms of pairs (see \S \ref{subS:2.1}).

\begin{remark}
Actually, we can consider the case that $f_V$ is a log smooth family of log smooth pairs of log general type with the coefficients of $D^U$ in $[0,1]$, if we assume that $f_V$ admits a relative good minimal model over a Zariski open subset $V_0\subseteq V$. Then, we have the smooth family $f^\epsilon_V: (U,D^U_\epsilon:=(1-\epsilon)D^U)\to V$ of log smooth pairs of log general type with the coefficients of $D^U_\epsilon$ in $[0,1)$ and $\Var(f^\epsilon_V)\ge \Var(f_V)$ by \cite[Lemma 3.1]{WW19}. See \cite[\S3]{WW19} for the definition of variation in the log canonical case. The inequality  $\Var(f^\epsilon_V)\ge \Var(f_V)$ implies that Theorem \ref{T:KK conjecture} still holds for smooth families of log canonical pairs of log general type, assuming the existence of relative good minimal models.
\end{remark}

In order to prove Theorem \ref{T:KK conjecture}, we prove Campana's Isotriviality Conjecture for log smooth families of log general type pairs, Theorem \ref{T:isotri} below. We continue to use the notations in Theorem \ref{T:KK conjecture}. We now fix a log smooth compactification $(Y, E^Y)$ of $V$, such that  the moduli map $\mu:V\to \mathfrak{M}$ extends to $\bar{\mu}:Y\to \mathfrak{M}$. Considering the Stein factorization of $\bar{\mu}$, we get a morphism $s:Y\to M$ with $M$ a normal variety so that $\bar\mu$ factors through $s$ and $s$ has generically connected fibers. In particular, $s:(Y,E^Y)\to M$ is a fibration of $(Y,E^Y)$ over $M$ with $\dim M=\Var(f_V)$ (see Definition \ref{def:Campfib}). 
%In \cite[Definition 4.16]{Ca11}, 
Campana defined the \emph{fiberations of general type} and \emph{specialty} in the category of geometric orbifolds; see Definition \ref{def:specialty} for a simplified version in our setting.
%; we also refer to \cite[Chapter 4]{Ca11} for more details on this topic.
%Using fibrations of general type, Campana defined \emph{specialty} for geometric orbifolds \cite[Definition 4.17]{Ca11} (see Definition \ref{def:specialty}(2) for the simplified definition in our setting). 
%A smooth quasi-projective variety $V$ is called \emph{special} if a log smooth compactification $(Y,E^Y)$ of a birational model of $V$ is special.
%These two definitions play key roles in the following isotriviality result.

\begin{theorem}\label{T:isotri}
With notations as above, the fibration $s$ is of general type. 
%In particular, $\bar\mu$ factors through the core of $(Y, E^Y)$.
As a consequence, if $V$ is a special quasi-projective variety, then $f_V$ is birationally isotrivial, i.e. $\Var(f_V)=0$.
\end{theorem}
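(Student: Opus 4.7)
The plan is to produce a Viehweg-Zuo-type big subsheaf inside the symmetric log cotangent bundle of $Y$ that respects Campana's orbifold structure along the Stein factor $s:Y\to M$, and to invoke Campana's criterion that such a subsheaf forces $s$ to be of general type. First I would compactify $f_V$ to a morphism of log smooth pairs $f:(X,D^X)\to (Y,E^Y)$ via the stable reduction procedure developed in Section~\ref{S:Geo const}, so that the moduli map $\mu$ extends to $\bar\mu:Y\to\mathfrak{M}$, which factors through $s$ as in the excerpt. The Popa--Schnell Hodge-module construction \cite{PS17}, in the log pair form proved by the authors in \cite[Theorem A]{WW19}, then produces a line bundle $\mathcal{A}$ on $M$ that is big on $\mu(V)=M$, together with a nontrivial $\mathscr{O}_Y$-linear map
\[
s^*\mathcal{A} \longrightarrow \Sym^N \Omega^1_Y(\log E^Y)
\]
for some $N\gg 0$; the bigness of $\mathcal{A}$ is exactly the statement $\dim M=\Var(f_V)$, which is built into the Stein factorization.

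The heart of the argument is to upgrade this injection to one into the sheaf of orbifold symmetric differentials associated with $s$ and its Campana base divisor $\Delta_s$. Concretely, along each component $F\subset M$ of the discriminant of $s$ with Campana multiplicity $m_F$, the factor $1/m_F$ along the corresponding components of $s^{-1}(F)$ must be absorbed by the inclusion. In the smooth canonically polarized case this is the main technical input of Campana--P\u{a}un \cite{CP15}; in our log smooth setting I would argue by stable reduction over an appropriate cover of $M$, so that the pulled-back family becomes genuinely stable and the Hodge-module source reduces to the one already analyzed in \cite{WW19}, then descend the resulting orbifold inclusion to $M$ to obtain
\[
s^*\mathcal{A} \hookrightarrow \Sym^N \Omega^1\bigl(Y, E^Y/(M,\Delta_s)\bigr).
\]
With such a big orbifold subsheaf in hand, the orbifold Bogomolov--Sommese inequality in the form proved in \cite{CP15,WW19} yields $\kappa\bigl(M, K_M+\Delta_s\bigr)\geq \kappa(\mathcal{A})=\dim M$, which is exactly the statement that $s$ is a fibration of general type in the sense of Definition~\ref{def:specialty}.

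For the consequence: by Definition~\ref{def:specialty}, a special quasi-projective $V$ admits no fibration of general type $(Y,E^Y)\to M'$ with $\dim M'>0$. Since $s$ is of general type by what we have just shown, necessarily $\dim M=0$, i.e.\ $\Var(f_V)=0$, which is birational isotriviality. The main obstacle is the orbifold refinement: the Hodge-theoretic map lands \emph{a priori} only in ordinary log symmetric differentials on $Y$, and matching the local model of the Kodaira--Spencer map with Campana's combinatorial multiplicities along the singular fibers of $s$, in the full log smooth pair setting and after the stable reductions of Section~\ref{S:Geo const}, is where the substantive work lies.
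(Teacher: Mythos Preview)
Your overall strategy is the same as the paper's --- produce a Viehweg--Zuo sheaf, refine it so that it sees the orbifold structure of the Stein factor $s$, and then invoke the Campana--P\u{a}un/Taji orbifold semipositivity --- but the mechanism you propose for the refinement step is not the one that actually works, and this is where the gap lies.

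First, a technical correction: the Hodge-module construction does not produce a big line bundle $\cA$ on $M$; it produces a line bundle $\cA$ on $Y$ with $\kappa(\cA)\ge \Var(f_V)$, together with an inclusion $\cA\hookrightarrow \Sym^{[n]}\Omega_Y(\log E^Y)$. There is no a priori reason $\cA$ descends to $M$, and your phrase ``big on $\mu(V)=M$'' conflates Kodaira dimension on $Y$ with bigness on the base. The whole point of the refinement is to arrange that $\cA$ behaves as if it came from $M$.

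Second, and more seriously, your proposed refinement via ``stable reduction over an appropriate cover of $M$'' and ``absorbing the multiplicities $1/m_F$'' is not a workable route here. The family lives over $Y$, not over $M$; there is no stable family over $M$ to reduce, and matching Campana's combinatorial multiplicities fiberwise along $s$ is not how the orbifold comparison is achieved. What the paper does instead is prove the \emph{refined} Viehweg--Zuo statement (Theorem~\ref{T:refined VZ}): the inclusion of $\cA$ actually lands in $\Sym^{[n]}\cB_{(Y,E^Y)}$, where $\cB_{(Y,E^Y)}\subset\Omega_Y(\log E^Y)$ is the saturation of the image of $s^*\Omega_M$. This is proved by showing that the Higgs field on the relevant Hodge module factors through $\cB_{(Y,E^Y)}$, using the auxiliary maximal-variation base $Y^\sharp$ built in the stable reduction of \S\ref{SS:Ass to max}: the tangent directions along the fibers of $\eta:Y'\to Y^\sharp$ kill the Higgs field, and these are exactly the sections of $\cQ_{(Y',D^{Y'})}=\ker(d\eta^*)$. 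Once one has $\cA\subset\Sym^{[n]}\cB_{(Y,E^Y)}$, the Jabbusch--Kebekus isomorphism $\Sym^{[n]}_{\cC}\Omega_M(\log\Delta^s)\simeq s_*\Sym^{[n]}\cB_{(Y,E^Y)}$ (after passing to a neat model) pushes $\cA$ down to a line bundle $\cA_M\subset\Sym^{[n]}_\cC\Omega_M(\log\Delta^s)$ with $\kappa_\cC(\cA_M)=\dim M$, and then Taji's \cite[Theorem 5.2]{Taj16} gives that $(M,\Delta^s)$ is of log general type. The orbifold structure on $M$ thus enters through the Jabbusch--Kebekus comparison, not through any direct local analysis of multiplicities along the discriminant of $s$ as you suggest.

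In short, you have correctly located where the substantive work lies, but you have not identified the tool that does it: the sheaf $\cB_{(Y,E^Y)}$ and the factorization of the Higgs field through it, combined with the Jabbusch--Kebekus pushforward formula. Your final paragraph on the consequence for special $V$ is correct.
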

The first statement of Theorem \ref{T:isotri} implies Theorem \ref{T:KK conjecture}, thanks to the additivity of the log Kodaira dimension for fibrations of general type, \cite[Theorem 6.3]{Ca11}. 

%\subsection*{Sketch of the proof of Theorem \ref{T:isotri}}
When $f_V$ is a smooth family of canonically polarized variety, the second statement of Theorem \ref{T:isotri} is the Isotriviality Conjecture of Campana as we mentioned above. In the case that $\dim V\leq 3$, it is proved by Jabbusch and Kebekus \cite{JK11a}. Their proof uses Campana's theory of geometric orbifolds and the so-called Viehweg-Zuo sheaves constructed in \cite{VZ02} as well as a refine result on Viehweg-Zuo sheaves in \cite{JK11b}, which roughly asserts that Viehweg-Zuo sheaves factor through the moduli. 
%They also use Campana's geometric orbifold theory. 
Based on the strategy of Jabbusch and Kebekus and the method of Campana and P\u aun in proving Viehweg's hyperbolicity conjecture, Taji \cite{Taj16} proved the isotriviality conjecture for smooth families of canonically polarized varieties in general.
%by generalizes a positivity result of Campana-P\u aun 
%Taji generalizes a criterion for showing a log pair being of log general type in \cite{CP15} to a geometric orbifold version, and uses the refined Viehweg-Zuo sheaves constructed in \cite{JK11b} to give a proof of Campana's Isotriviality Conjecture \cite{Taj16}.

By using Hodge modules, Popa and Schnell \cite{PS17} constructed the Viehweg-Zuo sheaves for smooth families of general type varieties with maximal variation (or more precisely for smooth families admitting relative good minimal models). 
%To deal with smooth families of log general type pairs, the stable reduction was used in replacing the semi-stable reduction used in \cite{PS17} and \cite{VZ02}. 
We further constructed Viehweg-Zuo sheaves for families of pairs with maximal variation in \cite{WW19}. A key step to prove Theorem \ref{T:isotri} is to construct the Viehweg-Zuo sheaves for families of (log) general type varieties  (pairs) with arbitrary variation and prove that they factor through the ``moduli" in the way analoguous to the result of Jabbusch and Kebekus.

\begin{definition}\label{D:def of B}
In regard to a log smooth compactification $(Y,E)$ of $V$ and an extension $\bar\mu: Y\to \mathfrak{M}$ of the moduli map $\mu$, 
we define the subsheaf 
$$\cB_{(Y, E^Y)}\subseteq \Omega_Y(\log E^Y)$$
by the saturation of the image of the natural morphism $\bar \mu^*\Omega_\mathfrak{M}\to \Omega_Y(\log E^Y)$ in $\Omega_Y(\log E^Y)$, where $\Omega_\mathfrak{M}$ is the sheaf of K\"ahler differentials (over $\bbC$) and $\Omega_Y(\log E^Y)$ the sheaf of log 1-forms with log poles along $E^Y$. 
\end{definition}

The above definition follows \cite[Notation 1.2]{JK11b}. However, they only considered the case when $\mathfrak{M}$ is the moduli space of canonically polarised manifolds in \emph{loc. cit}. 
Note that, since $\bar \mu$ factors through $s:Y\to M$, and $M\to \mathfrak{M}$ is quasi-finite, we have that $\cB_{(Y, E^Y)}$ is also the saturation of the image of $s^*\Omega_M \to \Omega_Y(\log E^Y)$, and the rank of $\cB_{(Y, E^Y)}$ is equal to the dimension of $M$, which is also the variation of $f_V$. 
Now, we state the main technical result of this paper.

\begin{theorem}\label{T:refined VZ}
Let $f_V:(U,D^U)\to V$ be a log smooth family of projective log pairs of log general type, over a smooth quasi-projective variety $V$, with the coefficients of $D^U$ in $[0,1)$. If $\Var(f_V)>0$, then after replacing $V$ by a further log resolution, we have a log smooth compactification $(Y,E^Y)$ of $V$, such that there exists an invertible subsheaf $\cA\subset \Sym^{[n]} \cB_{(Y,E^Y)}$, for some positive integer $n$, with $\kappa(\cA)\geq \Var(f_V)$.  
\end{theorem}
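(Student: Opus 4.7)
The strategy is to bootstrap from the already available maximal-variation Viehweg-Zuo sheaf constructions of \cite{PS17} and \cite{WW19} by passing to the base $M$ of the Stein factorization $s:Y\to M$ of the extended moduli map $\bar\mu$. Since $\dim M=\Var(f_V)$, any family one manages to build over $M$ that recovers the moduli-content of $f_V$ has maximal variation, hence admits a big Viehweg-Zuo sheaf on $M$. Pulled back to $Y$, such a sheaf automatically lands inside $\Sym^{[n]}\cB_{(Y,E^Y)}$, by the very definition of $\cB_{(Y,E^Y)}$ as the saturated image of $s^*\Omega_M$ in $\Omega_Y(\log E^Y)$.

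Concretely, I would first extend $f_V$ to a morphism $f:(X,D^X)\to(Y,E^Y)$ of log smooth pairs and perform a stable reduction as in \S\ref{S:Geo const}, so that, after further log resolution, both $\bar\mu$ and the factorization $Y\to M\to\fM$ become morphisms in our setup. Next, I would produce a log smooth family $(\widetilde U,D^{\widetilde U})\to(\widetilde M,E^{\widetilde M})$ of projective log pairs of log general type, with coefficients in $[0,1)$, on an appropriate log resolution of a generically finite cover of $M$, whose induced moduli map is generically finite onto the image of $\bar\mu$. Its variation then equals $\dim\widetilde M=\Var(f_V)$, so the pair version of the maximal-variation Viehweg-Zuo sheaf construction of \cite{WW19} furnishes an invertible subsheaf $\widetilde\cA\subset\Sym^{[n]}\Omega_{\widetilde M}(\log E^{\widetilde M})$ with $\kappa(\widetilde\cA)=\dim\widetilde M$. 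Blowing up $Y$ so that the rational map $Y\dashrightarrow\widetilde M$ becomes a morphism of log smooth pairs, the pullback of $\widetilde\cA$ saturates to the desired $\cA\subset\Sym^{[n]}\cB_{(Y,E^Y)}$, with $\kappa(\cA)\geq\Var(f_V)$ since the map to $\widetilde M$ is surjective.

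The principal obstacle is the construction of the auxiliary family over $\widetilde M$: because $\fM$ is only a coarse moduli space there is no universal family along $\mu$, and one must manufacture a log smooth family over $\widetilde M$ whose stable reduction matches $f_V^c$ under the moduli map. A natural route is to take a generically \'etale cover of $\fM$ carrying a stable family (after a refinement), base-change $f_V$, invoke the relative canonical model theorem \cite{BCHM} together with invariance of log plurigenera \cite{HMX18}, and take a log resolution, while preserving log smoothness and the $[0,1)$-coefficient constraint. Reconciling these auxiliary choices with $f_V$ — in particular ensuring that the resulting $\widetilde M$-family has variation equal to $\dim\widetilde M$ and the correct log structure — is the technically delicate heart of the argument; once this is in place, the transfer of $\widetilde\cA$ to $Y$ via the definition of $\cB_{(Y,E^Y)}$ is essentially formal.
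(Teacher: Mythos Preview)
Your strategy of reducing to the maximal-variation situation over a base $\widetilde M$ of dimension $\Var(f_V)$ and then pulling back is natural, and at the level of geometry the paper follows the same outline: it constructs such a base $Y^\sharp$ together with a log smooth family $f^\sharp$ of maximal variation over it (Propositions~\ref{P:stable reduction} and~\ref{P:stable reduction log smooth}). But the step you flag as ``the technically delicate heart of the argument'' is not merely delicate --- it cannot be carried out along the lines you sketch, and the paper takes a genuinely different route past it.

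The obstruction is the log structure on $\widetilde M$. The auxiliary family you build from a finite cover of $\fM$ carrying a stable family will, after log resolution, be log smooth only over $\widetilde M\setminus E^{\widetilde M}$, where $E^{\widetilde M}$ records where simultaneous log resolution of the (possibly singular) stable fibers breaks down; this locus bears no relation to the image of $V$. This is exactly what separates the general-type setting from the canonically polarized case of \cite{JK11b,Taj16}, where the stable family is already smooth over $\mu(V)$. Consequently the sheaf $\widetilde\cA$ produced by \cite{WW19} lives in $\Sym^{[n]}\Omega_{\widetilde M}(\log E^{\widetilde M})$ with the \emph{wrong} pole divisor. Moreover, since $\widetilde M$ is a nontrivial finite cover of $M$, there is in general no rational map $Y\dashrightarrow\widetilde M$ at all; one must pass to a finite cover $\psi:Y'\to Y$, and the pullback of $\widetilde\cA$ to $Y'$ then sits in $\Sym^{[n]}\Omega_{Y'}(\log D^{Y'})$ rather than in $\psi^*\Sym^{[n]}\cB_{(Y,E^Y)}$. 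Saturation does not repair this, because the two sheaves differ along divisors, and twisting down to match the poles can kill the Iitaka-dimension bound you need.

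The paper therefore does \emph{not} black-box \cite{WW19}. It reopens the Viehweg--Zuo construction on $Y'$ and builds a single graded Higgs sheaf $\cN_{g'}$ (Lemma~\ref{lem:seqmAm}) that is simultaneously a quotient of $\psi^*\cM_{f_0,\cL}$ --- so its Higgs field has poles only along $\psi^{-1}E^Y$ --- and a subobject of a module $\hat\cM_{\cA'}$ pulled back from $Y^\sharp$ --- so the Higgs field factors through $\cB$. The bridge linking these two incarnations is a birational-invariance statement for pure Hodge modules (Theorem~\ref{T:common HM} and Proposition~\ref{P:bir invar of M}), which rests on Saito's decomposition theorem. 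This Hodge-module comparison, advertised in the introduction as the ``key input'', is precisely the idea your proposal is missing.
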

%The fibration $s:(Y,E^Y)\to M$ in the above theorem is exactly the one in Theorem \ref{T:isotri}. 
We call the invertible sheaf $\cA$ the refined Viehweg-Zuo Sheaf of $f_V$ on $(Y, E^Y)$. 
The proof of Theorem \ref{T:refined VZ} is built upon the refinement of the stable reduction used in \cite{WW19} (see Section \ref{S:refined VZ} for details). Another key input of its proof is the use of Hodge modules. Roughly speaking, we essentially need Saito's decomposition theorem for pure Hodge modules to compare Viehweg-Zuo sheaves before and after base-changes, see Theorem \ref{T:common HM} in Section \ref{S:twisted} for details. 

\subsection*{Structure of the paper}
In Section \ref{S:birational equiv}, we recall some of Campana's definitions and results about orbifold fibrations, and concludes the proof of Theorem \ref{T:isotri} using Theorem \ref{T:refined VZ}. In Section \ref{S:twisted}, we fixed the notations and show some useful results using Saito's theory of Hodge modules. In Section \ref{S:Geo const}, we prove results related to stable reductions, and using them to make the geometric constructions that are needed in Section \ref{S:refined VZ} to construct the refined Viehweg-Zuo Sheaf in our setting.

\subsection*{Acknowledgement}
We would like to thank Christian Schnell for useful discussions during the preparation of the paper. The first author also gets some inspiration from a workshop held in Shanghai Center for Mathematical Sciences.

\section{Birationally equivalent fibrations }\label{S:birational equiv}
%and refined Viehweg-Zuo sheaves
\subsection{Birationally equivalent fibration in the sense of Campana}\label{subS:2.1}

We now recall Campana's birational equivalence of fibrations.
We mainly restrict ourselves to the setting of Theorem \ref{T:isotri} for our application while Campana works more generally in the category of geometric orbifolds in \cite{Ca11}. See also \cite{JK11a} for a more approachable introduction.

\begin{definition}\label{def:Campfib}
We say that $s:(Y,E^Y)\to M$ is \emph{a fibration of a log pair $(Y,E^Y)$ over $M$}, if $s$ is a dominate projective morphism with generically connected fibers, 
%$(Y,E^Y)$ log smooth, $E^Y$ reduced, 
and $W$ a normal variety. For simplicity we always assume that $Y$ is a smooth quasi-projective variety. 
%and the divisor $E^Y$ is reduced. 
Given two fibrations $s:(Y,E^Y)\to M$ and $s':(Y',E^{Y'})\to M'$, we say that $s'$ is \emph{dominant} over $s$, if we have the following commutative diagram
\begin{equation*}
    \begin{tikzcd}
    (Y,E^Y)\arrow[d, "s"]& (Y',E^{Y'})\arrow[d, "s'"]\arrow[l, "u"]\\
    M & M'\arrow[l, "v"],
    \end{tikzcd}
\end{equation*}
with both $u$ and $v$ are birational, and $u_*E^{Y'}=E^Y$. We say two fibrations $s$ and $s'$ are \emph{birationally equivalent} if they both can be dominated by a third fibration $s'':(Y'',E^{Y''})\to M''$.
\end{definition}
Using the recipe in \cite[Definition 3.2]{Ca11} (see also \cite[Construction and Definition 5.3]{JK11a}), we obtain the $\cC$-base $(M, \Delta^s)$ associated to the fibration $s$. 
%To obtain a meaningful $\cC$-base, we assume by default that the base space $M$ of the fibration $s$ satisfies $\dim(M)>0$. 
%From now on, we will use $s:(Y,E^Y)\to (M,\Delta^s)$ to replace our initial $s$.
%To compare the $\cC$-differential $\Omega_M(\log \Delta^s)$ with $\cB_{Y,s}$ as in the next proposition, we need to replace $s$ by a better birationally equivalent model. 
We say that a fibration $s:(Y,E^Y)\to (M, \Delta^s)$ is a \emph{neat} model if $(Y,E^Y)$ and $(M, \Delta^s)$ are log smooth, and for all of the divisors $F$ with $\codim s(F)\geq 2$, $F \subset E^Y$, \cite[Assumption 5.4]{JK11a}. The name of neat model is adopted from \cite[Definition 4.1]{Taj16}, which serves a similar purpose as ``strictement nette et haute'' model in \cite{Ca11}, in the general case. By definition, starting from an arbitrary fibration $s:(Y,E^Y)\to M $, the associated fibration $s:(Y,E^Y)\to (M, \Delta^s)$ over the associated $\cC$-base $(M, \Delta^s)$ is not always neat even when the pairs $(Y,E^Y)$ and $(M, \Delta^s)$ are log smooth. We recall the following result in \cite{Taj16}, which is essentially proved in \cite[Section 10]{JK11a}.
%We will not use the general case in our application. 
\begin{prop}\cite[Proposition 4.2]{Taj16}\label{prop:neatmodel}
Every fibration $s:(Y,E^Y)\to (M,\Delta^s)$ is dominated by a neat model.
\end{prop}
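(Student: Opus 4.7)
The plan is to produce the required neat model by combining flattening of the morphism $s$ with log resolutions on both the total space and the base. The obstruction to neatness is the presence of divisors on $Y$ that are $s$-contracted to loci of codimension at least two in $M$ but that do not already lie in $E^Y$; the strategy is to eliminate such divisors by enlarging the base until $s$ becomes equidimensional, and then to restore log smoothness by resolution.

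First I would apply Raynaud--Gruson (or Hironaka) flattening to the morphism $s$, producing a birational morphism $v_0:M_0\to M$ such that the proper transform $Y_0\subset Y\times_M M_0$ is equidimensional over $M_0$. After this step, every prime divisor of $Y_0$ either dominates a prime divisor of $M_0$ or is exceptional over $Y$, since the image $s(F)\subset M$ of any $s$-contracted divisor $F\subset Y$ is absorbed into the exceptional locus of $v_0$, which is now divisorial in $M_0$. I would then apply log resolution to $\bigl(M_0,\supp\Delta^{s_0}\cup \Exc(v_0)\bigr)$ to obtain a log smooth pair $(M'',\Delta^{s''})$, pull $Y_0$ back to $M''$, and take the main component. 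A further log resolution adapted to the preimage of $\supp\Delta^{s''}$ then yields a smooth projective variety $Y''$ with a birational morphism $u:Y''\to Y$. Taking $E^{Y''}$ to be the union of the strict transform of $E^Y$ with every $u$-exceptional prime divisor guarantees that $(Y'',E^{Y''})$ is log smooth and that $u_* E^{Y''}=E^Y$, so the induced $s'':(Y'',E^{Y''})\to (M'',\Delta^{s''})$ dominates $s$ in the sense of Definition~\ref{def:Campfib}.

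It remains to verify the neat condition. Equidimensionality of $Y_0\to M_0$ is preserved under the birational base change to $M''$, so every divisor on the main component of $Y_0\times_{M_0}M''$ either dominates a divisor of $M''$ or dominates $M''$. Divisors introduced by the subsequent resolution $Y''\to(\text{main component})$ are $u$-exceptional by construction and hence lie in $E^{Y''}$. Consequently, if $F\subset Y''$ is a prime divisor with $\codim_{M''} s''(F)\geq 2$, then $F$ must be $u$-exceptional, whence $F\subset E^{Y''}$, as required.

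The main technical obstacle will be the bookkeeping needed to check that Campana's recipe for the $\cC$-base behaves well under these birational modifications, so that $(M'',\Delta^{s''})$ is genuinely the $\cC$-base attached to $s''$ and the diagram is compatible with the original data, i.e.\ one may simultaneously control the multiplicities of $\Delta^{s''}$ along components of $\Exc(v_0)$ and of $\supp\Delta^{s^{}}$. This compatibility is essentially the content of the construction carried out in \cite[Section~10]{JK11a}, which I would invoke (or adapt) to close the argument.
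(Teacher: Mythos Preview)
The paper does not give a proof of this proposition at all: it is stated with a citation to \cite[Proposition 4.2]{Taj16} and the remark that it ``is essentially proved in \cite[Section 10]{JK11a}.'' Your sketch, based on Raynaud--Gruson flattening followed by log resolutions on base and total space and absorbing the newly created exceptional divisors into the boundary, is precisely the construction carried out in those references, so you are reproducing the cited argument rather than offering an alternative.
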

%We need the following proposition from \cite{JK11a}.
%\begin{prop}\cite[Proposition 3.3, 5.5 and 5.7]{JK11a}\label{P:iso bet B and orbifold Omega}
%Assume that a fibration $s:(Y,E^Y)\to (M,\Delta^s)$ is neat. Then we have the natural morphism 
%$$ds^m:s^{[*]}(\Sym^{[m]}_\cC \Omega_M(\log \Delta^s))\to \Sym^{[m]} \Omega_Y(\log E^Y),$$
%that naturally extends $s^{[*]}(\Sym^{[m]} \Omega_M)\to \Sym^{[m]}\Omega_Y$. Then $\Sym^{[m]}\cB_{Y,s}$ is the saturation of the image of $ds^m$, and
%this induces a natural inclusion 
%$$l:\Sym^{[m]}_\cC \Omega_M(\log \Delta^s) \to s_*\Sym^{[m]} \cB_{Y,s},$$
%which is an isomorphism.
%\end{prop}

%We can always replace $s$ by a neat model thanks to \cite[Section 10]{JK11a}, see also \cite[Proposition 4.2]{Taj16}.
%\begin{prop}
%Every projective fiberation $s:(Y,E^Y)\to (M,\Delta^s)$ is dominated by a neat model.
%\end{prop}
%We recall the following
\begin{definition}(\cite[Definition 4.10, 4.16 and 4.17]{Ca11})\label{def:specialty}
Let $s:(Y,E^Y)\to M$ be a fibration with  $(Y, E^Y)$ log smooth and $M$ projective. Let $\tilde{s}:(\tilde{Y}, E^{\tilde{Y}})\to (\tilde{M}, \Delta)$ be neat model dominant over $s$, with $ (\tilde{M}, \Delta)$ the induced $\cC$-base of $\tilde{s}$.  We define the canonical dimension of $s$ by $\kappa(\tilde{M},\Delta^{\tilde{s}})$. It does not depend on the choice of the neat model $\tilde{s}$ dominant over $s$ (see \cite[Corollaire 4.11]{Ca11}). Then we define:
\begin{enumerate}
    \item a fibration $s$ is \emph{of general type} if its canonical dimension is the same as the dimension of the base,
    \item  $(Y,E^Y)$ is \emph{special} if there exists no fibration $s:(Y,E^Y)\to M$ of general type with $\dim(M)>0$.
\end{enumerate} 
We also say the smooth quasi-projective variety $V=Y\setminus E^Y$ is special if $(Y,E^Y)$ is so. One can easily check that the specialty of $V$ does not depend on the choice of the compactification $(Y,E^Y)$ and that specialty is a birational invariant, that is, $V$ is special if and only if $V'$ is so, where $V'$ is a smooth quasi-projective variety properly birational to $V$.
\end{definition}
%If $s:(Y,E^Y)\to M$ is neat, then we denote by
%\begin{equation}\label{eq:BYE}
%\cB_{(Y, E^Y)}\subseteq \Omega_Y(\log E^Y)
%\end{equation} 
%the saturation of the image of the induced morphism of sheaves of (log) differential, $s^*\Omega_M\to \Omega_Y(\log E^Y)$; see \S \ref{subsec:3.1} for definitions. 

\subsection{Using the refined Viehweg-Zuo sheaf to prove the main theorem}

We use Theorem \ref{T:refined VZ} to give a proof of Theorem \ref{T:isotri}. The proof follows the strategy in \cite{JK11a}, which is also used by Taji in proving the Isotriviality conjecture of Campana \cite[Theorem 1.5]{Taj16}. 
%The proof in \emph{loc. cit.} is based on, among other things, the refined Viehweg-Zuo sheaves of Jabbusch and Kebekus constructed in \cite{JK11b}, while we use Theorem \ref{T:refined VZ} instead.

\begin{proof}[Proof of Theorem \ref{T:isotri}]
%(assuming Theorem \ref{T:refined VZ})]
When $f_V$ is birationally isotrivial, the first statement of Theorem \ref{T:isotri} is obvious. We hence assume $\Var(f_V)>0$.
Using Proposition \ref{prop:neatmodel}, we fix a neat model dominating $s$, $\tilde{s}:(\tilde{Y}, E^{\tilde{Y}})\to (\tilde{M}, \Delta^{\tilde{s}})$, . Let $\mu:(\tilde{Y}, E^{\tilde{Y}})\to (Y, E^Y)$ be the induced birational morphism. Without loss of generality, we further assume that $E^{\tilde{Y}}\supset \mu^{-1}E^Y$ by adding more components to $E^{\tilde{Y}}$ (by doing this the $\cC$-base $(\tilde M, \Delta)$ stays the same).

Recall that $\cB_{(Y, E^Y)}$ is defined to be the saturation of the image of the natural morphism $\tilde{s}^*: \Omega_{M}\to \Omega_{Y}(\log E^{Y})$.
We similarly define $\cB_{(\tilde{Y}, E^{\tilde{Y}})}$ to be the saturation of the image of the natural morphism $\tilde{s}^*: \Omega_{\tilde{M}}\to \Omega_{\tilde{Y}}(\log E^{\tilde{Y}})$. By \cite[Proposition 3.3]{JK11a}, we have that $\Sym^{[n]}\cB_{(\tilde{Y}, E^{\tilde{Y}})}$ is also the saturation of the image of the composed natural morphisms
$$\mu^*\Sym^{[n]}\cB_{(Y, E^Y )}\to \mu^*\Sym^{[n]}\Omega_Y(\log E^Y)\to \Sym^{[n]}\Omega_{\tilde{Y}}(\log E^{\tilde{Y}}).$$
Hence, the refined Viehweg-Zuo sheaf $\cA \subset  \Sym^{[n]} \cB_{(Y,E^Y)}$ on $(Y, E^Y)$ lifted to $\mu^* \cA\subset \Sym^{[n]}\cB_{(\tilde{Y}, E^{\tilde{Y}})}$.
Hence, to make notations simple, we can assume that $s:(Y, E^Y)\to (M, \Delta^s)$ itself is neat.

By definition, to prove the fibration $s: (Y,E^Y)\to M$ is of general type, one only needs to show that $(M, \Delta^s)$ is of log general type. 
%According to the discussion in the previous subsection, we can assume that both $(Y, E^Y)$ and $(M, \Delta)$ are log smooth, and satisfying Theorem \ref{T:refined VZ}.
%By using Proposition \ref{prop:neatmodel}, we can assume that $s$ is a neat model.
Since $\cB_{(Y, E^Y )}$ is saturated, $\Sym^{[n]}\cB_{(Y, E^Y )}\subseteq \Sym^{[n]}\Omega_Y(\log E^Y)$ is also a saturated subsheaf by \cite[Proposition 3.3]{JK11a}. Due to \cite[Proposition 5.7]{JK11a}, see \cite[Notation 4.1]{JK11a} for the notations, we have a natural isomorphism 
$$\iota: \Sym^{[n]}_{\cC}\Omega_M(\log \Delta^s)\to s_* \Sym^{[n]}\cB_{(Y, E^Y )}$$
for all $n$. Using Theorem \ref{T:refined VZ}, we have the refined Viehweg-Zuo sheaf $\cA\subset \Sym^{[n]}\cB_{(Y, E^Y )}$, and we can further assume that $\cA$ is saturated. Let $\cA_M$ be the saturation of $s_*\cA$ in $\Sym^{[n]}_\cC \Omega_M(\log \Delta^s)$, and in particular, it is a line bundle with  $\kappa_\cC(\cA_M)=\kappa(\cA)=\dim M$, by \cite[Proposition 5.7, Corollary 5.8]{JK11a}. 
 By applying \cite[Theorem 5.2]{Taj16}, we get that $(M, \Delta^s)$ is of log general type.
\end{proof}

\section{Construction of Higgs sheaves}\label{S:twisted}
In this section, we use Seito's theory of Hodge modules to show some results about Hodge bundles that will be used to construct the Viehweg-Zuo Sheaves. Some constructions are inspired by \cite{PS17}. 
%From now on, we will not consider Compana's orbifold structure, so we first fix the notations.
\subsection{Notations and remarks on log smooth morphism}\label{subsec:3.1}
For a log pair $(X,D^X)$, we mean that $X$ is a normal variety with $D^X$ a $\Q$-divisor and the log canonical divisor $K_X+D^X$ is $\Q$-Cartier. We also write $\omega_X(D^X)$ the $\Q$-line bundle given by the $\Q$-Cartier divisor $K_X+D^X$. We follow the terminology of singularities of pairs as in \cite[\S2.3]{KM98}. 

%Fix a log pair $(X, D^X)$, which is log canonical \cite{KM98}, in particular, $X$ is normal and the log canonical divisor $K_X+D^X$ is $\Q$-Cartier, i.e. $m(K_X+D^X)$ is a Cartier divisor, for sufficient positive and divisible $m$, and we denote  $\omega^m_X(mD^X)$ the associated line bundle. Abusing the notation, we denote the log canonical bundle $\omega_X(D^X)$, which is viewed as a $\Q$-line bundle. 
We say that the pair $(X,D^X)$ is log smooth if $X$ is smooth and the support of $D^X$, denoted by $D^X_{\red}$, is normal crossing. 
%If we further assume that $(X, D^X)$ is log smooth , since it is log canonical, all coefficients of the boundary divisor $D^X$ are in $(0,1]$. 
We denote $\Omega_X(\log D^X):=\Omega_X(\log D^X_{\red})$, the sheaf of log 1-forms with logarithmic poles along $D^X_{\red}$.
Notice that we have used $\Omega_X(\log D^X)$ to denote the sheaf of $\cC$-log forms in the sense of Campana in the proof of Theorem \ref{T:isotri}. However, Campana's $\cC$-sheaves only make their appearance in the proof of Theorem \ref{T:isotri} in Section \ref{S:birational equiv} but not in the rest part of this paper. 

%i.e. from now on, when we write log forms, we only consider the reduced boundary divisor, instead of the orbifold structure as we did in the previous section. We denote $\lceil D^X \rceil:= D^X_{\red}$. Note that $\det \Omega_X(\log D^X)=\omega_X(\lceil D^X\rceil).$
\begin{definition}
We say that $f:(X,D^X)\to (Y,D^Y)$ is a morphism of log smooth pairs, if both $(X, D^X)$ and $(Y, D^Y)$ are log smooth, and $f^{-1}(D^Y):=f^*(D^Y)_{\red}\subset D^X_{\red}$. We say that $f$ is strict if $f^{-1}(D^Y)=D^X_{\red}$.

We say that $f$ (as a morphism of log smooth pairs) is \emph{log smooth} if we further have that $f$ is dominant as a morphism of schemes and the cokernal of the log differential map
$$df: f^*\Omega_Y(\log D^Y)\to \Omega_X(\log D^X)$$
is locally free. In the case that $D^Y$ is empty, we have the following well-known result. For completeness, we give a brief proof here.

\begin{lemma}\label{L:log smooth equivalence}
$f:(X, D^X)\to Y$ being log smooth is equivalent to each stratum (including $X$) of $(X, D^X)$ being smooth over $Y$.
\end{lemma}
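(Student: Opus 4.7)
The plan is to reduce both implications to a simultaneous local computation of the rank of $df$ in adapted coordinates. Near any point $x\in X$, log smoothness of $(X,D^X)$ allows us to pick étale local coordinates $x_1,\dots,x_n$ in which the components of $D^X_{\red}$ passing through $x$ are the divisors $\{x_j=0\}$ for $j$ in some subset $J\subseteq\{1,\dots,n\}$, so that $\Omega_X(\log D^X)$ has the local frame $\{dx_j/x_j\}_{j\in J}\cup\{dx_i\}_{i\notin J}$. Choosing coordinates $y_1,\dots,y_m$ on $Y$ near $f(x)$, the log differential $df:f^*\Omega_Y\to\Omega_X(\log D^X)$ is represented by an $m\times n$ matrix $M$ whose $(k,j)$-entry equals $x_j\cdot \partial f^*(y_k)/\partial x_j$ for $j\in J$ and $\partial f^*(y_k)/\partial x_j$ for $j\notin J$.

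The key observation is that when evaluated at $x$ itself, where $x_j=0$ for $j\in J$, every column of $M(x)$ indexed by $J$ vanishes, while the remaining columns form precisely the Jacobian matrix at $x$ of the restricted morphism $f|_{D_J}:D_J\to Y$, where $D_J=\bigcap_{j\in J}D_j$ is the deepest closed stratum through $x$. Since $f^*\Omega_Y$ and $\Omega_X(\log D^X)$ are locally free and $f$ is dominant, local freeness of the cokernel of $df$ is equivalent to the pointwise rank of $M$ being locally constant and equal to its generic value $\dim Y$.

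For the direction ($\Leftarrow$), if every stratum is smooth over $Y$, then in particular $f|_{D_J}$ is smooth at $x$, so its Jacobian at $x$ has rank $\dim Y$; since this matrix appears as a submatrix of $M(x)$, we get $\operatorname{rank} M(x)\geq\dim Y$, and $\operatorname{rank} M(x)\leq\operatorname{rank}(f^*\Omega_Y)=\dim Y$ holds automatically, so equality is forced and the cokernel is locally free. Conversely, if the cokernel is locally free, then $\operatorname{rank} M(x)=\dim Y$ at every $x$, and the vanishing of the columns indexed by $J$ forces the Jacobian of $f|_{D_J}$ at $x$ to have rank $\dim Y$, so $f|_{D_J}$ is smooth at $x$. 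For any closed stratum $D_I$ containing $x$ (necessarily with $I\subseteq J$), the Jacobian of $f|_{D_I}$ at $x$ contains the Jacobian of $f|_{D_J}$ at $x$ as a submatrix and therefore also has rank $\dim Y$, so $f|_{D_I}$ is smooth at $x$ as well.

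The main step is the bookkeeping identifying the non-vanishing part of $M(x)$ with the Jacobian of the deepest stratum through $x$; once this is in place, both implications reduce to the standard Jacobian criterion for smoothness together with the standard fact that, for a morphism between locally free sheaves, local freeness of the cokernel is equivalent to local constancy of the rank. I do not anticipate any serious obstacle.
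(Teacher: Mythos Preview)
Your argument is correct and takes a genuinely different route from the paper's proof. The paper proceeds by induction on the number of components of $D^X$, peeling off one component $D^X_1$ at a time via two $3\times 3$ commutative diagrams built from the residue exact sequence for $\Omega_X(\log D^X)$; local splitting of the relevant short exact sequences drives the induction in both directions. Your proof, by contrast, avoids induction and diagram chasing entirely: you work in adapted \'etale-local coordinates, write down the matrix of $df$ in the log frame, and observe that at a point $x$ the columns indexed by the components through $x$ vanish identically, leaving exactly the Jacobian of the deepest stratum. This reduces both directions to the pointwise Jacobian criterion and the standard fact that a map of vector bundles has locally free cokernel if and only if its rank is constant. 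Your approach is more elementary and makes the geometric content (log smoothness is controlled by the smallest stratum) completely explicit; the paper's approach is more structural and would adapt more readily to situations where one wants to avoid choosing coordinates. One small point worth making explicit in your write-up: in the direction $(\Leftarrow)$, dominance of $f$ (which is part of the definition of log smooth) follows because $X$ itself is one of the strata assumed smooth over $Y$.
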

\begin{proof}
Fix one component $D^X_1$ of $D^X$. Consider the following commutative diagram, and define $\cA$ and $\cB$ with all horizontal and vertical complexes forming short exact sequences
\begin{equation*}
    \begin{tikzcd}
        f^*\Omega_Y(-D^X_1)\arrow[r]\arrow[d] & f^*\Omega_Y\arrow[r]\arrow[d] & (f^*\Omega_Y)|_{D^X_1}\arrow[d]\\
        \Omega_X(\log D^X)(-D^X_1)\arrow[r]\arrow[d] & \Omega_X(\log D^X-D^X_1)\arrow[r]\arrow[d] & \Omega_{D^X_1}(\log (D^X-D^X_1)|_{D^X_1})\arrow[d]\\
        \Omega_f(-D^X_1)\arrow[r]& \cA \arrow[r]& \cB.
    \end{tikzcd}
\end{equation*}
%where the middle the short exact sequence inductively, \cite[2.3. Properties]{EV92},
Working locally on $X$, if each stratum of $(X, D^X)$ being smooth over $Y$, then by induction on the number of components of $D^X$, we can assume that the right two vertical complexes split, and hence the left complex splits which implies that $f$ is log smooth.

On the other hand, consider the following commutative diagram, with all horizontal and vertical complexes forming short exact sequences
\begin{equation*}
    \begin{tikzcd}
        f^*\Omega_Y\arrow[r]\arrow[d] & f^*\Omega_Y\arrow[r]\arrow[d] & 0 \arrow[d]\\
        \Omega_X(\log D^X-D^X_1)\arrow[r]\arrow[d] & \Omega_X(\log D^X)\arrow[r]\arrow[d] & \cO_{D^X_1}\arrow[d]\\
        \cA\arrow[r]&  \Omega_f \arrow[r]& \cO_{D^X_1}.
    \end{tikzcd}
\end{equation*}
If $f$ is log smooth, then locally on $X$ the middle vertical short exact complex splits, which implies the left one splits. This implies the right vertical complex in the first commutative diagram splits. Hence, by induction, each stratum of $(X, D^X)$ is smooth over $Y$.
\end{proof}

% More precisely, we use the terminology ``log smooth" for morphisms of pairs satisfying the locally free condition for the cokernal of $df$ and ``smooth" for morphisms from pairs to varieties or from varieties to varieties when each strata is smooth over the base. 
%The following lemma is immediate by local computation and the proof is left to interested readers.

%We say that $f$ is a fibration, if $f_* \cO_X=\cO_Y$. 
In the case that the morphism $f:(X,D^X)\to (Y,D^Y)$ is dominant, 
%(see Definition \ref{def:Campfib})
we further denote by $D_h^X$, the horizontal part of $D^X$, which means it contains all components that are dominant over $Y$, and by $D_v^X:=D^X-D^X_h$, the vertical part of $D^X$. 
%In the case of $f$ is a fibration, 
In this case, we are always making the following assumption in this pape: \\
(Assumption. 0) $D^Y$, the boundary divisor of the log smooth base, and $D_v^X$, the vertical boundary divisor, are reduced, that is, their coefficients are $1$, and the coefficients of $D_h^X$, the horizontal boundary divisors are always in $(0,1]$. 

Notice that (Assumption. 0) is enough for the proof of Theorem \ref{T:KK conjecture} as the log smooth morphism $f_V$ has no boundary on the base and $D^U$ is dominant over $V$.

Moreover we write by $\lceil f \rceil: (X, \lceil D^X\rceil)\to  (Y,\lceil D^Y\rceil=D^Y)$ the morphism with rounding up the pairs. Hence $f$ is log smooth if and only if $\lceil f\rceil$ is.
\end{definition}

When we write $\Sym^i\sF$ (or $\Sym^{[i]}\sF$ to be constent with notations from references), we always consider the reflexive hull of the $i$-th symmetric power of the coherent sheaf $\sF$; similarly, for $\det \sF$, we also consider the reflexive hull of the determinant of $\sF$. Actually, due to the following important remark, taking the reflexive hull is also not necessary, since it only modifies the sheaf over a closed subset of codimension $\geq 2$. The next remark will be frequently used in later sections.

\begin{remark}\label{R:codim 2}
To prove Theorem \ref{T:refined VZ} we only need to prove it over a \emph{big open subset} of $Y$, a Zariski open subset with its complement of codimension $\geq 2$, so, we can feel free to ignore a \emph{small closed subset} of $Y$, a Zariski closed subset of codimension $\geq 2$. In particular, we can always assume the divisors on $Y$ are smooth. Furthermore, if we have the morphism of log smooth pairs, $f:(X,D^X)\to (Y,D^Y)$, is log smooth outside of the vertical boundary, i.e. $f$ is log smooth over $Y\setminus D^Y$, then by ignoring a small closed subset of $Y$, we have that $f$ itself is log smooth by the following lemma. 
\end{remark}

\begin{lemma}\label{L:log smooth out of small}
Fix a morphism of log smooth pairs, $f:(X,D^X)\to (Y,D^Y)$. Assume that $f|_{X\setminus f^{-1}D^Y}$ is log smooth over $Y\setminus D^Y$, then it is log smooth over a big open subset of $Y$.
\end{lemma}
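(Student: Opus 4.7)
The plan is to verify log smoothness at the generic point $\eta$ of each component $E$ of $D^Y$; by Remark \ref{R:codim 2} this will suffice. First I would shrink $Y$ and assume $D^Y$ is smooth. The locus where the cokernel of $df\colon f^*\Omega_Y(\log D^Y)\to \Omega_X(\log D^X)$ fails to be locally free is closed in $X$ and contained in $f^{-1}(D^Y)$ by hypothesis, so it suffices to show that $f$ is log smooth at every $x\in f^{-1}(\eta)$.

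Next I would fix such $x$ and choose local coordinates $s_1,\ldots,s_m$ at $x$ with $D^X_{\red}=\{s_1\cdots s_k=0\}$, ordered so that $D^X_1,\ldots,D^X_l$ are precisely the components through $x$ contained in $f^{-1}(D^Y)$ (so $l\ge 1$), together with coordinates $t_1,\ldots,t_n$ on $Y$ near $\eta$ with $E=\{t_1=0\}$. Then $f^*t_1=u\,s_1^{a_1}\cdots s_l^{a_l}$ for a unit $u$ and integers $a_i\ge 1$. A direct computation in the bases $\{dt_1/t_1,dt_2,\ldots,dt_n\}$ and $\{ds_1/s_1,\ldots,ds_k/s_k,ds_{k+1},\ldots,ds_m\}$ shows that the matrix $M(x)$ of $df$ at $x$ has first row $(a_1,\ldots,a_l,0,\ldots,0,\ast,\ldots,\ast)$ and remaining $n-1$ rows of the shape $(0,\ldots,0,J_Z)$, where $J_Z$ is the Jacobian of $f|_Z\colon Z\to E$ at $x$ with $Z:=D^X_1\cap\cdots\cap D^X_k$. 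The vanishing in the first $k$ columns of the lower rows comes from the identity $d(f^*t_j)=\sum_{i\le k}s_i\,\partial_{s_i}(f^*t_j)\cdot ds_i/s_i+\sum_{i>k}\partial_{s_i}(f^*t_j)\,ds_i$ evaluated at $x$ (using $s_i(x)=0$ for $i\le k$).

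From this block structure $\operatorname{rank}M(x)=1+\operatorname{rank}J_Z$, so log smoothness of $f$ at $x$ is equivalent to smoothness of $f|_Z\colon Z\to E$ at $x$. Since $Z\subseteq D^X_1\subseteq f^{-1}(D^Y)$, the morphism $f|_Z$ factors through $D^Y$ (which equals $E$ locally), and because $\eta\in f(Z)$ is the generic point of $E$, $f|_Z$ is dominant onto $E$. Generic smoothness in characteristic zero then yields a dense open of $E$ over which $f|_Z$ is smooth, so every point of $f|_Z^{-1}(\eta)$ is smooth for sufficiently generic $\eta$. Taking the union over the finitely many strata $Z$ that can arise produces a codimension-$\ge 2$ subset of $Y$ outside of which $f$ is log smooth. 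The main obstacle is the matrix computation establishing the block form; once that is in place, the generic-smoothness step is routine.
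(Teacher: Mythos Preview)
Your proof is correct and follows essentially the same approach as the paper: both compute $df$ in local log coordinates, observe that $dt_1/t_1$ maps to $\sum a_i\,ds_i/s_i$ plus terms vanishing at $x$ (hence contributes rank one), and reduce the remaining rank condition to generic smoothness of strata of $D^X$ over the components of $D^Y$. The only organizational difference is that the paper first restricts to a vertical stratum $E\subset D^X_v$ and then invokes generic (log) smoothness of $f|_E:(E,D^X_h|_E)\to F$, whereas you go directly to the deepest stratum $Z=D^X_1\cap\cdots\cap D^X_k$ and invoke ordinary generic smoothness of $f|_Z:Z\to E$; by Lemma~\ref{L:log smooth equivalence} these reductions are equivalent.
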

\begin{proof}
Since we can ignore all components in $D^X$ that maps to a small closed subset of $Y$, we only need to show that, for any closed point $p\in E$, where $E$ is any smooth stratum of $D^X$ that dominates a smooth component $F$ of $D^Y$, the cokernal of the natural morphism 
$df:f^*\Omega_Y(\log D^Y)\to \Omega_X(\log D^X)$
is locally free, on an open neighborhood $U$ of $p$. However, restricted on $U$, the local sections of $\Omega_X(\log D^X)|_U$ are generated by the sections from $\Omega_E(\log D^E)$ and $\{dx_1/x_1,..., dx_k/x_k\}$, where $D^E:=D^X_h|_E$, (noting that $E$ is contained in $D^X_v$,) and $x_i$ are local regular functions that defines $E$. Similarly, locally around $f(p)$, local sections of $\Omega_Y(\log D^Y)$ are generated by the local sections of $\Omega_F$ and $dy/y$, where $y$ is a local function that defines $F$. However, on $U$, we have $f^*y=u\Pi_i x_i^{a_i}$, where $u$ is a local unit, and $a_i$'s are positive integers. Hence we have, $df(f^*dy/y)=\sum_i a_i dx_i/x_i$, which is a local section of $\Omega_X(\log D^X)|_U$ without zero locus. Now we are only left to show that the cokernal of the natural morphism
$f|_E^*\Omega_F\to \Omega_E(\log D^E)$ is locally free, over a Zariski open subset of $F$, which is true by generic smoothness.
\end{proof}

\begin{definition}
Given a log smooth pair $(X,D)$, we denote
\begin{align*}
    \cT_{(X, D)}:&= \Omega_X(\log D)^\vee\\
    \sA^\bullet_{(X, D)}:&=\bigoplus_i\Sym^i\cT_{(X, D)}, 
\end{align*}
that is, $\cT_{(X, D)}$ is the sheaf of (algebraic) vector fields with logarithmic zeroes along $D^X$ and $\sA^\bullet_{(X, D)}$ is the associated graded algebra of symmetric powers. The sheaves $\cT_{(X, D)}$ and $\sO_X$ generate a subalgebra $\sD_{X,D}$ of $\sD_X$, the sheaf of (algebraic) differential operators. We call $\sD_{(X,D)}$ the sheaf of (algebraic) log differential operators. The order filtration $F_\bullet$ on $\sD_X$ induces the order filtration on $\sD_{(X,D)}$. With this filtration, we have a canonical isomorphism 
\[\sA^\bullet_{(X, D)}= \Gr^F_\bullet \sD_{(X, D)}\]
where $\Gr^F_\bullet \sD_{(X, D)}$ denotes the associated graded algebra. 
\end{definition}
%On $(X,D)$, we have a log differential algebra $\sD_{(X, D)}$, with a natural filtration $F_\bullet$ given by the degree of the differential operators. We have $\sA_{(X, D),\bullet}= \Gr^F_\bullet \sD_{(X, D)}$.

\begin{definition}
Given a log smooth morphism of log smooth pairs $f:(X,D^X)\to (Y,D^Y)$, 
\begin{align*}
\Omega_f&:=\text{coker}(f^*\Omega_Y(\log D^Y)\to \Omega_X(\log D^X)), \\
\Omega^i_f&:= \wedge^i \Omega_f,\\
\DR_f&:=[\cO_X\to\Omega^1_f\to ...\to \Omega_f^k],
\end{align*}
with cohomology degrees at $-k,...,0$, $k=\dim X-\dim Y$. We call $\DR_f$ the relative log de Rham complex of $f$.

We set $F_p\Omega^i_f=\Omega^i_f$ for $p\ge 0$ and $F_p \Omega^i_f=0$ for $p<0$. Then we have the induced filtration on the complex $\DR_f$:
$$F_\bullet \DR_f:=[F_{\bullet-k}\cO_X\to F_{\bullet-k+1}\Omega^1_f\to ...\to F_\bullet\Omega_f^k].
$$
%Furthermore, we denote 
%$$F_\bullet \DR_f:=[F_{\bullet+k}\cO_X\to F_{\bullet+k-1}\Omega^1_f\to ...\to F_\bullet\Omega_f^k],
%$$
%the filtered relative log de Rham complex, with the increasing filtration $F_\bullet$, induced by truncation, starting from $F_0 \DR_f=\Omega_f^k$. 
The associated graded complexes are $\Gr^F_i \DR_f=\Omega^{k-i}_f[i]$ for all $i$. 

Taking the push-forward functor $Rf_*$ on $(\DR_f,F_\bullet\DR_f)$, we then have the relative log Hodge-to-de Rham spectral squence. See Theorem \ref{thm:logHtdH} for further disucssions. 

Since $D^X$ and $D^Y$ are possibly $\Q$-divisors, we write the $\Q$-line bundle by
$$\omega_f:=\omega_X(D^X)\otimes f^*(\omega^{-1}_Y(-D^Y)).$$
Using $\lceil f\rceil$, we particularly have $\omega_{\lceil f\rceil}=\Omega_f^k$. When $D^X$ and $D^Y$ are empty, $\omega_f=\omega_X\otimes f^*(\omega^{-1}_Y)$, the relative canonical sheaf of $f$.
\end{definition}

Everything defined as above except $\omega_f$, only depends on the the information of $\lceil f\rceil$. We still use the sub-index $f$ to make notations simpler.
%but note that we have $\Omega_f^k=\omega_{\lceil f\rceil},$ instead of $\omega_f$.

\subsection{Direct image of graded $\sA^\bullet$-modules}\label{SS:Direct image of graded log D-modules}
Given a morphism of log smooth pairs $f:(X, D^X)\to (Y, D^Y)$, fixing a line bundle $\cL$ on $X$, we denote 
$$\mathcal{M}^i_{f,\cL ,\bullet}:=\mathbf{R}^if_*(\cL^{-1}\otimes \omega_X(\lceil D^X\rceil)\otimes^\mathbf{L}_{\sA^\bullet_{(X,D^X) }}f^*(\sA^\bullet_{(Y,D^Y)}\otimes \omega^{-1}_Y(-\lceil D^Y\rceil)))$$
for each $i$.
By definition, $\mathcal{M}^i_{f,\cL ,\bullet}$ is a graded $\sA^\bullet_{(Y,D^Y)}$-modules. 
Hence it possesses a Higgs-type morphism $$\theta_{f,\cL }^i:\cM_{f,\cL,\bullet}^i\to \cM_{f,\cL, \bullet+1}^{i}\otimes \Omega_{(Y,D^Y)}.$$
Denote $\mathcal{K}^i_{f,\cL, \bullet}:=\Ker(\theta^i_{f,\cL})$.
By repeatedly composing $\theta_{f,\cL }^i\otimes \textup{Id}$ with itself $k$ times, we have the induced morphism
$$\theta_{f,\cL,\bullet}^{i,k}:\cM_{f,\cL,\bullet}^i\to \cM_{f,\cL, \bullet+k}^{i}\otimes \Sym^k\Omega_{(Y,D^Y)}.$$

The following proposition is the pair analogue of \cite[Lemma 6.2]{WW19}.
%which can be proved by exactly the same argument. We leave its detail for interested readers.  
\begin{prop}\label{P:iso M to relative DR}
If $f$ is log smooth, we have the following quasi-isomorphism of graded $\sA^\bullet_{(Y,D^Y)}$-modules
\begin{align*}\label{E:relative DR comparison}
    \cM_{f,\cL,\bullet}^i&\stackrel{q.i.}{\simeq}\mathbf{R}^if_*(\cL^{-1}\otimes \Gr_\bullet \DR_f).
\end{align*}
In particular, we have that $\mathcal{M}^i_{f,\cL ,\bullet}$ are coherent over $\cO_Y$. 
\end{prop}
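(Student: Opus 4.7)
The plan is to compute the derived tensor product on the left-hand side via an explicit Koszul-type resolution coming from the log smoothness of $f$. By log smoothness, the short exact sequence
$$0 \to f^*\Omega_Y(\log D^Y) \to \Omega_X(\log D^X) \to \Omega_f \to 0$$
has $\Omega_f$ locally free of rank $k=\dim X-\dim Y$. Dualizing yields $0\to \cT_f\to \cT_{(X,D^X)}\to f^*\cT_{(Y,D^Y)}\to 0$ with $\cT_f:=\Omega_f^\vee$, and this identifies $f^*\sA^\bullet_{(Y,D^Y)}$ with $\sA^\bullet_{(X,D^X)}/(\cT_f\cdot \sA^\bullet_{(X,D^X)})$. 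Since $\cT_f$ is locally free and the inclusion is locally split, $\cT_f$ forms locally a Koszul-regular sequence in $\sA^\bullet_{(X,D^X)}$, producing the Koszul resolution
$$K^\bullet:\; \sA^\bullet_{(X,D^X)}\otimes_{\cO_X}\wedge^\bullet \cT_f \to f^*\sA^\bullet_{(Y,D^Y)}$$
of left $\sA^\bullet_{(X,D^X)}$-modules, in which $\sA^\bullet_{(X,D^X)}\otimes_{\cO_X}\wedge^i \cT_f$ sits in cohomological degree $-i$ with generators in internal $\sA^\bullet$-grade $i$.

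Next I would apply $\omega_X(\lceil D^X\rceil)\otimes_{\sA^\bullet_{(X,D^X)}}-$ to $K^\bullet$. As a graded right $\sA^\bullet_{(X,D^X)}$-module, $\omega_X(\lceil D^X\rceil)$ is concentrated in internal grade $0$, and its $\sA^+$-action vanishes on the associated graded (the Hodge-theoretic manifestation of $\omega_X(\lceil D^X\rceil)$ being a right $\sD_{(X,D^X)}$-module with Hodge filtration starting at index zero). Consequently, the grade-$p$ part of $\omega_X(\lceil D^X\rceil)\otimes_{\sA^\bullet_{(X,D^X)}}^{\mathbf{L}} f^*\sA^\bullet_{(Y,D^Y)}$ is concentrated in cohomological degree $-p$ and equals $\omega_X(\lceil D^X\rceil)\otimes_{\cO_X}\wedge^p \cT_f$, with trivial induced Koszul differential.

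Using the adjunction $\omega_X(\lceil D^X\rceil)=f^*\omega_Y(\lceil D^Y\rceil)\otimes_{\cO_X}\Omega_f^k$ (i.e.\ $\omega_{\lceil f\rceil}=\Omega_f^k$) together with the duality $\Omega_f^k\otimes_{\cO_X}\wedge^p \cT_f\cong \Omega_f^{k-p}$, after further tensoring with $f^*\omega_Y^{-1}(-\lceil D^Y\rceil)$ the grade-$p$ piece becomes $\Omega_f^{k-p}$ in cohomological degree $-p$, which is precisely $\Gr^F_p \DR_f$. Tensoring with $\cL^{-1}$ and applying $\mathbf{R}^if_*$ then gives the stated quasi-isomorphism of graded $\sA^\bullet_{(Y,D^Y)}$-modules. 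Coherence of $\cM^i_{f,\cL,\bullet}$ over $\cO_Y$ is immediate from this presentation: only grades $0\le p\le k$ are nonzero, each $\Omega_f^{k-p}$ is $\cO_X$-coherent, and in our setting $f$ is projective so the higher direct images preserve coherence.

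The step I expect to require the most care is verifying that the graded $\sA^\bullet_{(Y,D^Y)}$-module structure on the left-hand side corresponds, under the above identifications, to the natural Higgs-type structure on $\mathbf{R}^if_*(\cL^{-1}\otimes \Gr^F_\bullet \DR_f)$ coming from the Kodaira-Spencer map --- equivalently, that $\theta^i_{f,\cL}$ agrees with the morphism induced by the $\cO_X$-linear connecting map $\Omega_f^{k-p}\to f^*\Omega_Y(\log D^Y)\otimes_{\cO_X}\Omega_f^{k-p-1}$ arising from the filtered log cotangent sequence on $F_\bullet\DR_f$. This compatibility of Higgs fields can be checked by a local computation in coordinates along the components of $D^X$ and $D^Y$, following the template of the non-pair case in \cite[Lemma 6.2]{WW19}.
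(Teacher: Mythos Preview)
Your approach is correct and is essentially the same as the paper's: both resolve $f^*\sA^\bullet_{(Y,D^Y)}$ by the relative log Spencer/Koszul complex $\sA^\bullet_{(X,D^X)}\otimes_{\cO_X}\wedge^\bullet\cT_f$, which is a resolution precisely because log smoothness makes a local basis of $\cT_f$ a regular sequence in $\sA^\bullet_{(X,D^X)}$, and then identify the resulting complex with $\Gr_\bullet\DR_f$ via the duality $\Omega_f^k\otimes\wedge^p\cT_f\cong\Omega_f^{k-p}$. The paper's proof is terser and does not separately verify the compatibility of the $\sA^\bullet_{(Y,D^Y)}$-module structures that you flag at the end, but otherwise the arguments coincide.
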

\begin{proof}
First, we have the relative log Spencer complex
\[\sA^\bullet_{(X,D^X)}\otimes \cT^k_f\to\sA^\bullet_{(X,D^X)}\otimes \cT^{k-1}_f\to\dots\to \sA^\bullet_{(X,D^X)},\]
given by 
\[P\otimes v_1\wedge\cdots\wedge v_p\mapsto \sum_i Pv_i\otimes v_1\wedge\cdots\wedge\hat{v}_i\wedge\cdots\wedge v_p,\]
for $v_i$ sections of $\cT_f$, where $\cT_f^i$ is the $\sO_X$-dual of $\Omega_f^i$. We locally choose a free basis of $\cT^k_f$, denoted by $(\xi_1,\dots,\xi_k)$. Then by construction, the relative log Spencer complex locally is the Koszul complex of $\sA^\bullet_{(X,D^X)}$, with actions given by multiplications of $\xi_i$, for all $i$. Since $f$ is log smooth, we know that $(\xi_1,\dots,\xi_k)$ is a regular sequence in $\sA^\bullet_{(X,D^X)}$ and the $0$-th cohomology sheaf is $f^*\sA^\bullet_{(Y,D^Y)}$. Therefore, the relative log Spencer complex is a locally free resolution of $f^*\sA^\bullet_{(Y,D^Y)}$. Using the dual-pair $(\Omega_f^i, \cT_f^i)$, we then see that 
$\cM_{f,\cL,\bullet}$ is quasi-isomorphic to 
\[\mathbf{R}^if_*(\cL^{-1}\otimes \Gr_\bullet \DR_f).\]
%The identity is due to that $\sA_{(X,D^X),\bullet} \otimes Gr_\bullet DR_f$ gives a graded resolution of $f^*\sA_{(Y,D^Y),\bullet}$ as flat $\sA_{(X,D^X),\bullet} $-modules.
\end{proof}

\begin{notationsimplyfy}
%For the rest of the paper, we will only consider the case that $i=0$, hence we omit the upper-index $i$. 
To simplify notations, we write
$$\cM_{f,\cL}=\mathbf{R}^0f_*(\cL^{-1}\otimes \Gr_\bullet \DR_f),$$
for the rest of this paper. In the case that $\cL$ is trivial, we omit $\cL$ from the lower-index, that is,  we write 
$$\cM^i_f=\mathbf{R}^if_*(\Gr_\bullet \DR_f) \textup{ and } \cM_f=\cM^0_f.$$
%To further simplify notations, we will also omit the $\bullet$ from the lower-index. 
Similarly, when $\cL$ is trivial, $\cK^0_{f,\cL,\bullet}$ (resp. $\theta^{0, k}_{f,\cL,\bullet}$) is simplified to $\cK_{f,\bullet}$ (resp. $\theta^{ k}_{f,\bullet}$) or $\cK_\bullet$ (resp. $\theta^{k}_{\bullet}$) when there is no ambiguity of $f$.
\end{notationsimplyfy}

\subsection{Direct image of filtered log $\sD$-modules and canonical extensions of variation of mixed Hodge structures}
We now discuss variations of mixed Hodge structures (VMHS) associated to log smooth morphisms. Let us refer to \cite{KasVMHS} for the definition of VMHS and admissible VMHS. In contrast to the definition in \emph{loc. cit.}, we assume the Hodge filtrations and the weight filtrations are both increasing filtrations, to be consistent with the good filtration for $\sD$-modules underlying Hodge modules. 

The following theorem is well-known; see for instance \cite{FF} and \cite{KawH}. We give it an alternative proof by using Saito's mixed Hodge modules. 
\begin{theorem}\label{thm:logHtdH}
Suppose that $f:(X,D^X)\to (Y, D^Y)$ is a projective log smooth morphism between log smooth pairs with $D^Y$ smooth (but not necessarily irreducible). We then have:
\begin{enumerate}
    \item The relative log Hodge-to-de Rham spectral sequence degenerate at $E_1$.
    \item We have an admissible VMHS $V$ underlying the local system $\mathbf{R}^if^o_*\mathbb{Q}_{U^X}$, where $f^o$ denotes the morphism $f^o: U^X\coloneqq X\setminus D^X\to U^Y\coloneqq Y\setminus D^Y$. Moreover, if $f$ is strict, then $V$ is a variation of Hodge structure (VHS).
    \item The associated graded module (with respect to the Hodge filtration) of the upper-canonical extension $V^{\ge 0}$, that is, the logarithmic extension of $V$ with eigenvalues of the monodromy of $\mathbf{R}^if^o_*\mathbb{Q}_{U^X}$ along $D^Y $ in $[0,1)$), is $\cM^i_f$.
\end{enumerate}
\end{theorem}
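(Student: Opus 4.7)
The plan is to derive all three statements from Saito's direct image theorem for mixed Hodge modules, applied to the open inclusion $\tilde j\colon U^X\hookrightarrow X$ and the projective morphism $f\colon X\to Y$. Let $M := \tilde j_{*}^{H}\Q_{U^X}^{H}[\dim X]$ be the mixed Hodge module on $X$; its underlying filtered $\sD_X$-module is $(\cO_X(*D^X),F_\bullet)$, and since $(X,D^X)$ is log smooth, Saito's computation (equivalently, Deligne's filtered log de Rham complex) identifies this filtration with the one induced by $(\Omega_X^\bullet(\log D^X),F_\bullet)$ via the quasi-isomorphism $\Omega_X^\bullet(\log D^X)\simeq \DR(M)$. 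Pushing $M$ forward along $f$ produces a complex $f_+M\in D^b\mathrm{MHM}(Y)$ of mixed Hodge modules, and everything we want is encoded in this object.

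To obtain (1), I would filter $\Omega_X^\bullet(\log D^X)$ by the subcomplexes $F^p := f^{*}\Omega_Y^{\geq p}(\log D^Y)\wedge \Omega_X^{\bullet-p}(\log D^X)$; because $f$ is log smooth the associated graded pieces are $f^{*}\Omega_Y^p(\log D^Y)\otimes \Omega_f^{\bullet-p}[-p]$. Saito's strictness of the Hodge filtration under projective direct image then forces the resulting $\mathbf{R} f_*$-spectral sequence, which is precisely the relative log Hodge-to-de Rham spectral sequence with $E_1^{p,q}=\mathbf{R}^qf_*\Omega_f^p$, to degenerate at $E_1$. For (2), the restriction of $\cH^if_+M$ to $U^Y$ underlies the local system $\mathbf{R}^{i+\dim X-\dim Y}f^o_*\Q_{U^X}[\dim Y]$, so Saito's correspondence between admissible VMHS on $U^Y$ and mixed Hodge modules on $Y$ smooth away from $D^Y$ produces the required admissible VMHS. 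If $f$ is strict then $D_h^X=\emptyset$, the geometric fibers of $f^o$ are smooth and projective, the cohomology is pure, and the VMHS degenerates to a VHS.

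For (3), I would identify the underlying $\sD_Y$-module of $\cH^if_+M$ with the upper canonical extension $V^{\geq 0}$ and its good filtration with the one inherited from Saito's pushforward; granted this, Saito's formula for the associated graded of a projective direct image of filtered $\sD$-modules together with Proposition~\ref{P:iso M to relative DR} yields
$$\Gr_\bullet^F\cH^if_+M\;=\;\mathbf{R}^if_*\bigl(\Gr_\bullet^F\DR_{X/Y}(M,F)\bigr)\;=\;\mathbf{R}^if_*\Gr_\bullet^F\DR_f\;=\;\cM_f^i.$$
The main obstacle is precisely the identification with the \emph{upper} canonical extension: the log pushforward of $(\cO_X(*D^X),F_\bullet)$ a priori produces some logarithmic extension of the VMHS across $D^Y$, and one must verify, via a local $V$-filtration computation at a smooth point of a component of $D^Y$ using that $f$ is log smooth (so locally $f^{*}t=u\prod s_i^{a_i}$ with $a_i\in\Z_{>0}$), that the monodromy eigenvalues of the resulting extension lie in $[0,1)$. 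This matches the defining property of $V^{\geq 0}$ and pins down the comparison with $\cM_f^i$.
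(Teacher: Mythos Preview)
Your overall strategy---push forward the mixed Hodge module $\tilde j_*^H\Q_{U^X}^H[\dim X]$ along $f$ and read off (1)--(3) from Saito's theory---is exactly the paper's approach. Two points deserve correction or sharpening.

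\textbf{The filtration in (1) is the wrong one.} What you wrote, $F^p = f^*\Omega_Y^{\ge p}(\log D^Y)\wedge\Omega_X^{\bullet-p}(\log D^X)$, is the Katz--Oda (Leray) filtration on the \emph{absolute} log de Rham complex; its $E_1$-page is $\Omega_Y^p(\log D^Y)\otimes R^qf_*\DR_f$, not $R^qf_*\Omega_f^p$. The relative log Hodge-to-de Rham spectral sequence comes instead from the b\^ete/Hodge filtration $F_\bullet$ on the \emph{relative} complex $\DR_f$. The paper's argument is to establish a filtered quasi-isomorphism
\[
f_+(\cO_X(*D^X),F_\bullet)\;\simeq\; Rf_*(\DR_f,F_\bullet\DR_f)
\]
(via \cite[Proposition~2.3]{BPW17} when $D^Y=\emptyset$), after which strictness of the Hodge filtration under projective pushforward gives $E_1$-degeneration directly. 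Your invocation of Saito's strictness is the right engine, but it must be applied to this identification, not to the Katz--Oda filtration.

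\textbf{On (3) and the upper canonical extension.} You correctly isolate the real difficulty. The paper does not carry out the local $V$-filtration computation you sketch; instead it separates the two cases. When $D^Y=\emptyset$ there is nothing to check: $\cH^i f_+\cO_X(*D^X)$ is an $\cO_Y$-coherent $\sD_Y$-module, hence locally free, and the claim follows from \cite[3.27]{Sa90}. When $D^Y$ is nonempty (and smooth), the paper invokes the double-strictness of the direct image functor for mixed Hodge modules of normal crossing type \cite[\S3]{Sa90}, specifically \cite[Theorem~15]{W17a}, which packages precisely the compatibility between the Hodge and $V$-filtrations that forces the extension to be $V^{\ge 0}$. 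Your proposed local computation is what underlies that result, but citing it is both shorter and safer than redoing the $V$-filtration analysis by hand.
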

\begin{proof}
We first deal with the case that $D^Y$ is empty. We consider the mixed Hodge module $M$ underlying $Rj_*\Q_{U^X}[n]$, where $j:U^X\hookrightarrow X$. The underlying filtered $\sD_X$-module of $M$ is $\sO_X(*D^X)$ with 
\[\sO_X(*D^X)=\bigcup_{k\in \Z} \sO_X(kD^X)\]
and the filtration 
$$F_p\sO_X(*D^X) = \left\{
  \begin{array}{lr}
    \sO_X((p+1)D^X) ,& q \ge 0\\
    0 ,& q < 0
  \end{array}
\right.
$$
By \cite[Proposition 2.3]{BPW17}, we obtained a filtered quasi-isomorphism 
\begin{equation}\label{eq:qi111}
    f_+(\sO_X(*D^X),F_\bullet)\simeq Rf_*(\DR_f,F_\bullet DR_f)
\end{equation}
where $f_+$ denotes the filtered $\sD$-module direct image functor. By the strictness of the Hodge filtration in \cite[3.3.17]{Sa88} and \cite[2.15]{Sa90}, the filtered complex $f_+(\sO_X(*D^X),F_\bullet)$ is strict or equivalently the relative log Hodge-to-de Rham spectral sequence degenerate at $E_1$ by the filtered quasi-isomorphism \eqref{eq:qi111}. Hence Part (1) follows in this case. 

Since $R^if_*\DR_f$ is $\sO$-coherent and hence it is locally free over $\sO_Y$ as $$R^if_*\DR_f\simeq \cH^if_+\sO_X(*D^X)$$ 
are both $\sD_Y$-modules. Hence, the underlying perverse sheaf $\mathbf{R}^if^o_*\mathbb{Q}_{U^X}[n]$ is locally constant (upto a shift). By \cite[3.27]{Sa90}, we hence obtain Part (2) in this case.

If $D^Y$ is smooth but not empty, then one can use the double-strictness of direct image functor for mixed Hodge modules of normal crossing type (see \cite[\S 3]{Sa90} and also \cite{W17a}). More precisely, one applies for instance \cite[Theorem 15]{W17a} and Part (1), (2) and (3) follow in general.
\end{proof}

%If we assume that the projective log smooth fibration $f:(X,D^X)\to (Y, D^Y)$ has $D^Y$ being smooth, (see also Remark \ref{R:codim 2},) we have that $\mathcal{M}^i_{f}$ corresponds to the associated graded pieces of the canonical extension of the admissible variation of mixed Hodge structures (AVMHS) $\mathbf{R}^if_* i_*\mathbb{C}_U$, over $Y\setminus D^Y$ along $D^Y$, where $i:U:X\setminus D^X\to X$ is the natural embedding, with the real part of the eigenvalues in $[0,1)$, see for example, \cite{W17a}. In particular, $\mathcal{M}^i_{f}$ is a locally-free $\cO_Y$ module. If we further assume that $f^{-1}D^Y=D^X$, we have that   $\mathbf{R}^if|_{U*} \mathbb{C}_U$ is actually a variation of Hodge structures (VHS) on $Y\setminus D^Y$. 

We also need the following weak negativity of Kodaira-Spencer kernals proved in \cite{PW}.
\begin{theorem}\cite[Theorem 4.8]{PW}\label{P:qe for -N-L}
In the situation of Theorem \ref{thm:logHtdH}, if $f$ is strict, then $\mathcal{K}^\vee_{p}$ is weakly positive for each $p\in \Z$, where $\bullet^\vee$ denotes the $\sO$-dual of $
\bullet$.  
\end{theorem}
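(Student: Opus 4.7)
The plan is to recognize $\cK_p$ as the kernel of a graded Kodaira--Spencer map for a polarized variation of Hodge structure, and to deduce positivity of the dual from the curvature properties of the Hodge metric.

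By Theorem \ref{thm:logHtdH}(2)--(3), strictness of $f$ ensures that the sheaves $\cM_p$ are the graded pieces (with respect to the Hodge filtration) of the upper-canonical extension of a polarized VHS on $U^Y$, while $\theta_p$ is the graded piece of the associated logarithmic Kodaira--Spencer morphism. Hence $\cK_p$ is a saturated subsheaf of a Hodge bundle, and over $U^Y$ it inherits a smooth Hermitian metric from the Hodge polarization.

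Griffiths' classical curvature formula then implies that this induced Hodge metric on $\cK_p|_{U^Y}$ is Griffiths seminegatively curved: the curvature of the full Hodge bundle decomposes into a contribution from the Kodaira--Spencer operator and one from its adjoint, and when restricted to the kernel only the adjoint piece survives, with the sign favorable for seminegativity. Dualizing yields a smooth Griffiths semipositive Hermitian metric on $\cK_p^\vee|_{U^Y}$. To extend this positivity across the boundary $D^Y$, I would invoke the asymptotic analysis of Hodge norms --- Schmid's nilpotent and $SL_2$-orbit theorems together with the Cattani--Kaplan--Schmid estimates. Because we work with the upper-canonical extension (monodromy eigenvalues in $[0,1)$), the Hodge norm blows up at worst polylogarithmically along $D^Y$, so the resulting singular Hermitian metric on $\cK_p^\vee$ extends to all of $Y$ with a plurisubharmonic weight having at worst logarithmic poles, and the curvature remains semipositive as a current.

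The final step is the standard conversion of singular Hermitian semipositivity with mild singularities into weak positivity in Viehweg's sense, via the Ohsawa--Takegoshi $L^2$-extension theorem in the form used for instance by P\u aun--Takayama or Cao--P\u aun: the mildness of the singularities guarantees that the multiplier ideal of the weight is trivial on a big open subset of $Y$, which suffices to produce enough global sections of $\Sym^{[m]}\cK_p^\vee\otimes H$ (for $H$ ample and $m$ sufficiently divisible) to verify weak positivity. The main obstacle I anticipate is the careful control of the Hodge metric at codimension-two loci of $D^Y$, in particular self-intersections of components of $D^Y$; this is typically handled by passing to a log resolution and invoking the birational invariance of weak positivity for reflexive sheaves, but making this compatible with the Cattani--Kaplan--Schmid estimates requires some delicate bookkeeping, and this is where I would expect the technical heart of the proof to lie.
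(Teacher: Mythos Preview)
The paper does not give its own proof of this statement: it is quoted verbatim from \cite[Theorem~4.8]{PW} and simply invoked as a black box. So there is no in-paper argument to compare against.

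Your sketch is the correct strategy and is essentially what the cited reference carries out: identify $\cK_p$ as the kernel of the graded Kodaira--Spencer map of a polarized VHS, use the Griffiths curvature computation to see that the Hodge metric is seminegative on that kernel, dualize, extend the resulting singular Hermitian metric across $D^Y$ via the norm asymptotics of Schmid and Cattani--Kaplan--Schmid, and then pass from semipositive curvature in the sense of currents to weak positivity in Viehweg's sense by the P\u aun--Takayama mechanism. One small imprecision: for the upper-canonical extension the Hodge norm does not merely ``blow up at worst polylogarithmically''; rather, the relevant point is that local frames adapted to the canonical extension have norms controlled by powers of $\log|t|$ and $|t|$ in such a way that the induced metric on $\cK_p^\vee$ has locally bounded weight (equivalently, the singular metric has zero Lelong numbers), which is exactly what is needed for the multiplier ideal to be trivial. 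You correctly flag the codimension-two locus as the place requiring care; in practice this is handled either by the full multivariable CKS estimates or, as you suggest, by working over a big open set and using that weak positivity is insensitive to codimension-two modifications for reflexive sheaves.
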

\subsection{Adding extra boundary divisors}\label{subsect:3.4}
We assume that $f\colon (X,D^X)\to (Y,D^Y)$ is a log smooth morphism of log smooth pairs. 
If we add a divisor $D'$ to $D^Y$ (assuming $D'$ is not supported on $D^Y$) and obtain another normal crossing divisor $D'^Y$ over a big open subset of $Y$ (by getting rid of the singular locus of $D'$ and the intersection of $D'$ and $D^Y$), then we define $D'^X$ by $D'^X_h=D^X_h$ and $D'^X_v=f^{-1}D'^Y$, and denote $f':(X,D'^X)\to (Y,D'^Y)$ the new log smooth morphism (by Lemma \ref{L:log smooth out of small}). Since $f$ is smooth away from $D^Y$,  by a local computation one easily obtains that
\begin{equation}
    \Omega_f\simeq \Omega_{f'},
\end{equation}
over a big open subset of $Y$,
which further implies
\begin{equation}\label{E:id for extra boundary}
    \mathcal{M}^i_{f,\cL}\simeq \mathcal{M}^i_{f',\cL},
\end{equation}
as graded $\sA_{(Y,D^Y) }$-modules over a big open subset of $Y$. 

%we enlarge $D^Y$ to another normal crossing divisor $D'^Y$, removing their intersection to keep $D'^Y$ smooth, and define $D'^X$ by $D'^X_h=D^X_h$ and $D'^X_v=f^{-1}D'^Y$, and denote $f':(X,D'^X)\to (Y,D'^Y)$, then locally, $f'$ is just a projection $\bbC^m\to \bbC^n$ onto the first $n$ entries of $\bbC^n$, with $D'^Y$ given by the coordinate functions of $\bbC^n$ and $D^X_h$ given by the coordinate functions of last $m-n$ entries, so we have $(X, D'^X)$ is still log smooth, and by a local computation
%\begin{equation}
%    \Omega_f\simeq \Omega_{f'},
%\end{equation}
%which implies
%\begin{equation}\label{E:id for extra boundary}
%    \mathcal{M}^i_{f,\cL}\simeq \mathcal{M}^i_{f',\cL},
%\end{equation}
%as graded $\sA_{(Y,D^Y) }$-modules. In particular, $\mathcal{M}^i_{f',\cL}$, naturally as a $\sA_{(Y,D'^Y)}$-module, is actually a $\sA_{(Y,D^Y)}$-module.

More generally, we have the following.
\begin{prop}\label{P:id from smooth base change}
Fix a projective log smooth morphism $f:(X, D^X)\to (Y, D^Y)$, and a morphism of log smooth pairs $\eta:(Y', D^{Y'})\to(Y,D^Y)$, with the underlying morphism of schemes $Y'\to Y$ being smooth. Let $X'=X\times_Y Y'$, $D^{Y'}$ be a SNC divisor containing $\eta^{-1}D^Y$ and $f':(X', D^{X'})\to (Y', D^{Y'})$ be the induced morphism of log smooth pairs with $D^{X'}=f'^{-1}D^{Y'} \cup \bar{\eta}^{-1}D^X$ in the following diagram:
\begin{equation*}
\begin{tikzcd}
(X', D^{X'})\arrow[r,"\bar\eta"] \arrow[d,"f'"] &(X, D^X)\arrow[d,"f"]\\
(Y', D^{Y'})\arrow[r,"\eta"] & (Y, D^Y)
\end{tikzcd}
\end{equation*}
Then, over a big open subset of $Y'$, we have %that $f'$ is projective and log smooth. Moreover we have 
an natural isomorphism of graded $\sA^\bullet_{(Y', D^{Y'})}$-modules
$$    \eta^*\cM_{f}\simeq \cM_{f'}.
$$
%In particular, if we consider the lowest graded piece, we have $\eta^*f_*\omega_f\simeq f'_*\omega_{f'}$
\end{prop}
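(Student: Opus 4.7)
The plan is to reduce, via Proposition~\ref{P:iso M to relative DR}, to a statement about relative log Hodge cohomology and then combine flat base change (since $\eta$ is smooth, hence flat) with the boundary-enlargement identity of Section~\ref{subsect:3.4}. Applying Proposition~\ref{P:iso M to relative DR} with trivial $\cL$ to both $f$ and $f'$, and using $\Gr^F_p\DR_f\simeq \Omega^{k-p}_f[p]$, it suffices to produce for each $p$ a natural isomorphism
\[\eta^*R^pf_*\Omega^{k-p}_f\simeq R^pf'_*\Omega^{k-p}_{f'}\]
over a big open subset of $Y'$, compatibly with the Higgs fields $\theta^p$ coming from the $\Gr^F$ of the relative Gauss--Manin connection.

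To construct these, I would factor the passage from $f$ to $f'$ through the intermediate log smooth morphism
\[f''\colon (X',\,\bar\eta^{-1}D^X)\to (Y',\,\eta^{-1}D^Y),\]
obtained as the strict smooth base change of $f$ along $\eta$ (after discarding from $Y'$ a closed subset of codimension $\ge 2$ so that $\eta^{-1}D^Y$ is SNC and $f''$ is log smooth; cf.\ Remark~\ref{R:codim 2} and Lemma~\ref{L:log smooth out of small}). Since $f''$ is a strict smooth base change of $f$, a local computation in log-adapted coordinates gives $\bar\eta^*\Omega_f\simeq \Omega_{f''}$, hence $\bar\eta^*\Omega^j_f\simeq \Omega^j_{f''}$ for all $j$; flat base change for the flat morphism $\eta$ then yields
\[\eta^*R^pf_*\Omega^{k-p}_f\simeq R^pf''_*\Omega^{k-p}_{f''},\]
so that $\eta^*\cM_f\simeq \cM_{f''}$ as graded $\sO_{Y'}$-modules. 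On the other hand, $f'$ differs from $f''$ only by enlarging the vertical boundary on the base from $\eta^{-1}D^Y$ to $D^{Y'}$ (the horizontal part $D^X_h$ is unchanged under base change), so the identification \eqref{E:id for extra boundary} of Section~\ref{subsect:3.4} furnishes $\cM_{f''}\simeq \cM_{f'}$ over a big open subset of $Y'$. Composing yields the desired sheaf-level isomorphism $\eta^*\cM_f\simeq \cM_{f'}$.

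It remains to check that this composition respects the graded $\sA^\bullet_{(Y',D^{Y'})}$-module structure, i.e., that the Higgs fields agree. For the first step $\eta^*\cM_f\simeq \cM_{f''}$, this is the functoriality of the relative Gauss--Manin connection under strict smooth base change---equivalently, by Theorem~\ref{thm:logHtdH}(3), the compatibility of the upper canonical extension of an admissible VMHS with smooth pullback, which is a special case of Saito's base change for mixed Hodge modules under noncharacteristic pullback. For the second step, the Higgs field on $\cM_{f''}$ a priori has log poles only along $\eta^{-1}D^Y$; since the underlying VMHS is already smooth across the extra boundary components $D^{Y'}\setminus \eta^{-1}D^Y$ (the local system extends across them without monodromy), the Higgs field on $\cM_{f'}$ has vanishing residue along these extra components and matches the restriction of the field on $\cM_{f''}$ under the identification of Section~\ref{subsect:3.4}. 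The main technical point I anticipate is precisely this compatibility of the Higgs/Kodaira--Spencer structure under base change plus boundary enlargement: the coherent-sheaf identifications are formal consequences of flat base change and Section~\ref{subsect:3.4}, whereas matching the $\sA^\bullet$-actions relies on the Saito-theoretic naturality of the filtered $\sD$-module direct image, which is why Theorem~\ref{thm:logHtdH} and Proposition~\ref{P:iso M to relative DR} furnish the right framework.
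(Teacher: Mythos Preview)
Your proposal is correct and follows essentially the same route as the paper: reduce via identity~\eqref{E:id for extra boundary} to the strict base change case $D^{Y'}=\eta^{-1}D^Y$, and then combine Proposition~\ref{P:iso M to relative DR} with smooth (flat) base change. The paper simply performs these two steps in the opposite order and in one line.

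One remark: your separate verification of the $\sA^\bullet_{(Y',D^{Y'})}$-module compatibility via Theorem~\ref{thm:logHtdH}(3) and Saito's noncharacteristic base change is more than is needed. The isomorphism $\bar\eta^*\Omega_f\simeq\Omega_{f''}$ upgrades to an isomorphism $\bar\eta^*\Gr_\bullet\DR_f\simeq\Gr_\bullet\DR_{f''}$ of complexes of $f''^*\sA^\bullet_{(Y',\eta^{-1}D^Y)}$-modules directly from the log Spencer resolution in the proof of Proposition~\ref{P:iso M to relative DR}, so the Higgs compatibility is automatic without invoking the VMHS picture.
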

\begin{proof}
We first note that, using  the identity (\ref{E:id for extra boundary}),
%Remark \ref{R:codim 2}, Lemma \ref{L:log smooth out of small},
we only need to show the case when $D^{Y'}=\eta^{-1}D^Y$. In this case we also have $D^{X'}=\bar\eta^{-1}D^X$. The required statement then follows from Proposition \ref{P:iso M to relative DR} and the smooth base-change. 
%The general case can be reduced to the special case, by applying the identity (\ref{E:id for extra boundary}) on $f'$.

%Since $\eta$ is smooth, $\eta^*D^Y$ is reduced and SNC. Hence, the eigenvalues of the residues along each divisor does not change, after pulling back the AVMHS, which implies that the pullback of the Deligne's canonical extension will also get the canonical extension with the same residues.
\end{proof}

\begin{prop}\label{P:morphism due to base change}
Assume that $f:(X, D^X)\to (Y, D^Y)$ is a projective log smooth morphism with $D^X=f^{-1}D^Y$ (that is, $f$ is strict), and $\psi: Y'\to Y$ is a finite morphism branched over $D^Y$ with $\psi^{-1}D^Y$ normal crossing. Let $(X', D^{X'})$ be a log resolution of the normalization of the main component of $X\times_Y Y'$  with $D^{Y'}$ a SNC divisor containing $\psi^{-1}D^Y$ and $D^{X'}=f'^{-1}D^{Y'}$ (we can assume that $D^{X'}$ is also normal crossing after taking further log resolutions), and $f':(X', D^{X'})\to (Y', D^{Y'})$ be the induced projective morphism of log smooth pairs in the following diagram 
\begin{equation*}
\begin{tikzcd}
(X', D^{X'})\arrow[r,"\bar\psi"] \arrow[d,"f'"] &(X, D^X)\arrow[d,"f"]\\
(Y', D^{Y'})\arrow[r,"\psi"] & (Y, D^Y)
\end{tikzcd}
\end{equation*}
%Assume that $f'$ is also log smooth. (Actually, $f'$ is log smooth over $Y'\setminus D^{Y'}$, hence is so over a big open subset of $Y'$, Remark \ref{R:codim 2})
%\begin{equation*}
%\begin{tikzcd}
%(X', D^{X'})\arrow[r,"\bar\psi"] \arrow[d,"f'"] &(X, D^X)\arrow[d,"f"]\\
%(Y', D^{Y'})\arrow[r,"\psi"] & (Y, D^Y)
%\end{tikzcd}
%\end{equation*}
Then, over a big open subset of $Y'$, we have a natural inclusion of $\sA^\bullet_{(Y', D^{Y'})}$-modules
$$    \psi^*\cM_{f}\to \cM_{f'}.
$$
%Given $f:(X, D^X)\to (Y, D^Y)$ a log smooth projective morphism with $D^X=f^{-1}D^Y$, and $\psi_0: Y'\to Y$ a finite morphism only ramifying over $D^Y$, let $(X', D^{X'})$ be a log resolution of the normalization of a main component of $X\times_Y Y'$, and $f':(X', D^{X'})\to (Y', D^{Y'})$ be the induced projective morphism of log smooth pairs, where $D^{Y'}$ is a SNC divisor containing $\psi^{-1}D^Y$ and $D^{X'}=f'^{-1}D^{Y'}$. Assume that $f'$ is also log smooth. (Actually, $f'$ is log smooth over $Y'\setminus D^{Y'}$, hence is so over a big open subset of $Y'$, Remark \ref{R:codim 2})
%\begin{equation*}
%\begin{tikzcd}
%(X', D^{X'})\arrow[r,"\bar\psi"] \arrow[d,"f'"] &(X, D^X)\arrow[d,"f"]\\
%(Y', D^{Y'})\arrow[r,"\psi"] & (Y, D^Y)
%\end{tikzcd}
%\end{equation*}
%Then, over a big open subset of $Y'$, we have a natural inclusion of $\sA_{(Y', D^{Y'})}$-modules
%$$    \psi^*\cM_{f}\to \cM_{f'}.
%$$
\end{prop}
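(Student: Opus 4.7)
The plan is to construct the morphism by pulling back the relative log de Rham complex along $\bar\psi$ and pushing forward, and then to conclude injectivity by reducing to the étale case already handled in Proposition \ref{P:id from smooth base change}. A preliminary reduction is needed in order to apply Proposition \ref{P:iso M to relative DR} to $f'$: since $\psi$ is branched only over $D^Y$, the restriction $\psi|_{U'}\colon U':=Y'\setminus D^{Y'}\to U:=Y\setminus D^Y$ is étale. Combined with the smoothness of $f|_{f^{-1}U}\colon f^{-1}U\to U$, this forces $X\times_Y Y'$ to be smooth above $U'$, and the log resolution $X'\to X\times_Y Y'$ is then an isomorphism there. In particular $f'$ is smooth (hence log smooth) over $U'$, and Lemma \ref{L:log smooth out of small} (via Remark \ref{R:codim 2}) upgrades this to log smoothness of $f'$ on a big open of $Y'$, so that Proposition \ref{P:iso M to relative DR} and Theorem \ref{thm:logHtdH} now apply to both $f$ and $f'$ on that big open.

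Next, the inclusion $\bar\psi^{-1}D^X\subseteq D^{X'}_{\red}$ gives pullback maps of log forms $\bar\psi^*\Omega^i_X(\log D^X)\to\Omega^i_{X'}(\log D^{X'})$, and after quotienting by the pullback of log forms from $Y$ we obtain $\bar\psi^*\Omega^i_f\to\Omega^i_{f'}$, which assemble into a filtered morphism of complexes $\bar\psi^*\DR_f\to\DR_{f'}$. Passing to associated graded and composing with the natural base-change transformation $\psi^*\mathbf{R}^0f_*\to \mathbf{R}^0f'_*\bar\psi^*$ (available because $\psi$ is flat by miracle flatness between smooth equidimensional varieties) produces, after invoking Proposition \ref{P:iso M to relative DR}, the desired morphism
\begin{equation*}
    \psi^*\cM_f\;\longrightarrow\;\mathbf{R}^0f'_*\bar\psi^*\Gr^F_\bullet\DR_f\;\longrightarrow\;\cM_{f'}.
\end{equation*}
Since every step is natural in the Higgs-type differential $\theta^k_\bullet$, this morphism respects the graded $\sA^\bullet_{(Y',D^{Y'})}$-module structures.

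For injectivity, the key observation is that on $U'$, $\psi|_{U'}$ is étale so the construction reduces to a smooth base change; by Proposition \ref{P:id from smooth base change} the map is an isomorphism there. On the big open of $Y'$ where $f'$ is log smooth, both $\psi^*\cM_f$ and $\cM_{f'}$ are locally free, being pullbacks or associated gradeds of upper canonical extensions of admissible variations of mixed Hodge structure (Theorem \ref{thm:logHtdH}(3)). A morphism of locally free sheaves which is an isomorphism on a dense open subset is automatically injective, which finishes the argument. The main subtle point I expect is precisely the preliminary reduction verifying that $f'$ is log smooth on a big open of $Y'$; without that step, neither the identification in Proposition \ref{P:iso M to relative DR} nor the local freeness coming from Theorem \ref{thm:logHtdH} are directly available for $f'$, and the whole construction would have to be carried out at the more cumbersome level of $\mathcal{M}^0_{f',\sO,\bullet}$ via the defining derived tensor product.
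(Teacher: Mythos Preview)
Your argument is correct but follows a genuinely different route from the paper's. Both proofs begin the same way: restrict to $U'=Y'\setminus D^{Y'}$, where $\psi$ is \'etale, and invoke Proposition \ref{P:id from smooth base change} to see that $\psi^*\cM_f$ and $\cM_{f'}$ agree there. From that point the two diverge. The paper does not build the global morphism from the relative log de Rham complexes at all; instead it identifies $\cM_f$ and $\cM_{f'}$ with the Hodge-graded pieces of the upper-canonical extensions $V_f^{\ge 0}$ and $V_{f'}^{\ge 0}$ of the common VHS on $U'$ (Theorem \ref{thm:logHtdH}(3)), cites \cite[Proposition 4.4]{PW} for the inclusion $\psi^*V_f^{\ge 0}\subseteq V_{f'}^{\ge 0}$ of filtered bundles, and then applies \cite[Corollary 4.7]{PW} filtrant by filtrant to pass to the associated graded. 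In other words, the paper produces the map one level up (on the canonical extensions themselves) and only then takes $\Gr^F$.

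Your construction instead manufactures the graded map directly, via $\bar\psi^*\Gr_\bullet\DR_f\to\Gr_\bullet\DR_{f'}$ together with the flat base-change transformation, and then deduces injectivity from local freeness plus the generic isomorphism. This works, and has the virtue of being explicit and self-contained, avoiding the external input from \cite{PW}. The price is that the compatibility with the $\sA^\bullet_{(Y',D^{Y'})}$-structure, which in the paper's approach is automatic (one is taking $\Gr^F$ of an inclusion of filtered $\sD$-modules with logarithmic connection), must in your approach be checked by hand via functoriality of the Kodaira--Spencer maps under base change; your sentence ``every step is natural in the Higgs-type differential'' is the right idea but would benefit from one more line unwinding it through the connecting maps of the short exact sequences defining $\theta_f$ and $\theta_{f'}$. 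Likewise, the local freeness you invoke for injectivity is exactly the Hodge-theoretic content that the paper's route packages inside the canonical-extension formalism, so the two arguments ultimately rest on the same input from Theorem \ref{thm:logHtdH}.
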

\begin{proof}
Since over $Y'\setminus D^{Y'}$ $\psi$ is \'etale, we have $\psi^*\cM_f$ is identical with $\cM_{f'}$ over $Y'\setminus D^{Y'}$, by Proposition \ref{P:morphism due to base change}.
Then by Theorem \ref{thm:logHtdH} (3), we further have that $\cM_f$ (resp. $\cM_{f'}$) is the associated graded module of the upper-canonical extension $V^{\ge 0}_f$ (resp. $V^{\ge 0}_{f'}$ ) of the VHS of the $0$-th cohomology of the smooth fibers of $f$ (resp. $f'$). By \cite[Proposition 4.4]{PW}, we see that 
\[\psi^*V^{\ge 0}_f\subseteq V^{\ge 0}_{f'}.\] 
Then we apply \cite[Corollary 4.7]{PW} for each filtrant of  $V^{\ge 0}_f$ and $V^{\ge 0}_{f'}$ in their Hodge filtrations and obtain the required inclusion.  
%However, for $\psi^*\cM_f$, the range of the real part of the residues along each component of $D^{Y'}$ will be in $[0, d)$, which depends on the ramification degree $d$ of $\psi$ along that divisor, with $d\geq 1$. While $\cM_{f'}$ has the range $[0,1)$, which induces the inclusion.
\end{proof}

%\subsection{Weak positivity}%from Hodge higgs bundle}
%If we further assume that $f:(X,D^X)\to (Y, D^Y)$ has no horizontal boundary divisor, and $D^Y$ is smooth, (see also Remark \ref{R:codim 2}), we have that $\mathcal{M}_{f}$ corresponds to associated graded pieces of Deligne's canonical extension of the VHS of $\mathbf{R}^0 f_* \mathbb{C}$ over $Y\setminus D^Y$ along $D^Y$, with the real part of the eigenvalues of the residues in $[0,1)$.

%\begin{prop}\label{P:qe for -N-L}
%Notations as in \ref{SS:Direct image of graded log D-modules}, under the above assumption, $\mathcal{K}^\vee_{f}$ is weakly positive. 
%\end{prop}

\subsection{Birationally invariant Hodge modules}

\begin{theorem}\label{T:common HM}
Given a projective dominant morphism $f:X\to Y$ with $X$ and $Y$ being smooth, there exists a pure Hodge module $M_f$ on $Y$, which corresponds to a generically defined VHS (by using the equivalence in \cite[3.21]{Sa90}), such that for any projective morphism $f':X'\to Y$ with $X'$ being smooth and birational to $X$, $M_f$ is isomorphic to a direct summand of $\mathcal{H}^0f'_+\Q^H_{X'}$, and its lowest filtered piece in the Hodge filtration is isomorphic to $f_{*}\omega_{f}=f'_{*}\omega_{f'}$, where $\Q^H_{X'}$ is the trivial pure Hodge module on $X'$ and $f'_+$ denotes the (derived) direct image functor for Hodge modules.  
\end{theorem}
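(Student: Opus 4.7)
The plan is to define $M_f$ as the pure Hodge module direct summand with strict support $Y$ in Saito's decomposition of $\mathcal{H}^0 f_+ \Q^H_X$, and to verify birational invariance through the strict-support equivalence between pure Hodge modules with full support $Y$ and generically defined VHS on $Y$.

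By Saito's decomposition theorem applied to the projective morphism $f$, the cohomology $\mathcal{H}^0 f_+\Q^H_X$ decomposes canonically as
\[
\mathcal{H}^0 f_+ \Q^H_X \;\cong\; \bigoplus_Z M_{f,Z},
\]
indexed by irreducible closed subvarieties $Z\subseteq Y$, where each $M_{f,Z}$ is a pure Hodge module with strict support $Z$. Since $f$ is dominant, the summand $M_{f,Y}$ is nonzero, and I set $M_f:=M_{f,Y}$. By the strict-support equivalence \cite[3.21]{Sa90}, $M_f$ corresponds canonically to the generically defined VHS on $Y$ induced by $f$ over the smooth locus of $f$, which takes care of the first claim.

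For birational invariance, let $f'\colon X'\to Y$ be a second projective morphism with $X'$ smooth and birational to $X$. I would pick a dense open $U\subseteq Y$ contained in the smooth loci of both $f$ and $f'$, chosen small enough that the birational map $X\dashrightarrow X'$ restricts to an isomorphism $f^{-1}(U)\xrightarrow{\sim} f'^{-1}(U)$ of smooth projective families over $U$. The generic VHS on $Y$ associated to $f$ therefore agrees with the one associated to $f'$ after restriction to $U$. By the strict-support equivalence, the strict-support-$Y$ summands of $\mathcal{H}^0 f_+\Q^H_X$ and $\mathcal{H}^0 f'_+\Q^H_{X'}$ are isomorphic pure Hodge modules on $Y$; in particular $M_f$ is isomorphic to a direct summand of $\mathcal{H}^0 f'_+\Q^H_{X'}$.

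Finally, to identify the lowest Hodge filtered piece of $M_f$, I would appeal to Saito's formula for filtered direct images realized via the relative de Rham complex (in the spirit of Proposition \ref{P:iso M to relative DR} for the smooth case, together with Kollár-type vanishing for the general case): the lowest nonzero piece of the Hodge filtration on $\mathcal{H}^0 f_+\Q^H_X$ is $f_{*}\omega_{X/Y}=f_{*}\omega_f$. Since each summand $M_{f,Z}$ with $Z\subsetneq Y$ has a lowest filtered piece set-theoretically supported on $Z$, and since $f_{*}\omega_f$ is torsion-free on $Y$ (Kollár), the contributions to the lowest piece from these smaller-support summands must vanish, leaving the lowest filtered piece of $M_f$ equal to $f_{*}\omega_f$. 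The identity $f_{*}\omega_f=f'_{*}\omega_{f'}$ then follows from the classical fact $\pi_*\omega_{\tilde X}=\omega_X$ for any birational projective morphism $\pi$ between smooth projective varieties, applied twice through a common smooth resolution $\tilde X$ of $X$ and $X'$. I expect the most delicate step to be this last identification of the lowest filtered piece of the specific strict-support summand $M_f$ with $f_{*}\omega_f$, as it requires carefully tracking how the Hodge filtration interacts with Saito's strict-support decomposition and ruling out any torsion cancellation between summands.
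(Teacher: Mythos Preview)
Your argument has a genuine gap in the birational invariance step. The claim that one can choose a dense open $U\subseteq Y$ so that the birational map $X\dashrightarrow X'$ restricts to an isomorphism $f^{-1}(U)\xrightarrow{\sim} f'^{-1}(U)$ is simply false: the hypothesis is only that $X'$ is birational to $X$, not that the birational map is fiberwise. For a concrete counterexample, take $f\colon X=\mathrm{Bl}_{\sigma}(S\times\mathbb{P}^1)\to \mathbb{P}^1$ with $S$ a smooth projective surface and $\sigma$ a section, and $f'\colon X'=S\times\mathbb{P}^1\to\mathbb{P}^1$. Then over every $y\in Y$ the fiber $X_y\cong\mathrm{Bl}_{\sigma(y)}S$ is not isomorphic to $X'_y\cong S$. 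In this example your $M_f$, the strict-support-$Y$ summand of $\mathcal{H}^0f_+\Q^H_X$, corresponds to the constant VHS $H^2(\mathrm{Bl}_{\mathrm{pt}}S)\cong H^2(S)\oplus\Q(-1)$, which has strictly larger rank than $\mathcal{H}^0f'_+\Q^H_{X'}$ (the constant VHS $H^2(S)$), so it cannot be a direct summand of the latter. Thus your definition of $M_f$ does not satisfy the conclusion of the theorem.

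The paper avoids this by taking a \emph{smaller} summand. Passing to a common smooth resolution $\tilde X$ of $X$ and $X'$, the decomposition theorem makes both $\mathcal{H}^0f_+\Q^H_X$ and $\mathcal{H}^0f'_+\Q^H_{X'}$ direct summands of $\mathcal{H}^0\tilde f_+\Q^H_{\tilde X}$, and all three share the same lowest Hodge piece $f_*\omega_f=f'_*\omega_{f'}=\tilde f_*\omega_{\tilde f}$ (this is the birationally invariant object, as you correctly note in your last paragraph). One then defines $M_f$ to be the sum of those simple summands of $\mathcal{H}^0\tilde f_+\Q^H_{\tilde X}$ that meet this common lowest piece nontrivially. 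By semisimplicity and minimality, $M_f$ is then forced to sit inside any summand containing the whole lowest piece, in particular inside both $\mathcal{H}^0f_+\Q^H_X$ and $\mathcal{H}^0f'_+\Q^H_{X'}$. Your identification of the lowest filtered piece via torsion-freeness is correct in spirit and is essentially how the paper uses $f_*\omega_f$; the missing idea is to let this birationally invariant sheaf \emph{define} $M_f$, rather than taking the full strict-support-$Y$ part.
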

\begin{proof}
We first fix $\tilde{X}$, a common resolution of $X$ and $X'$.
\begin{equation*}
    \begin{tikzcd}
    X\arrow[rd, "f"] &\tilde{X}\arrow[r, "\psi"]\arrow[d, "\tilde{f}"] \arrow[l,"\phi"]& X'\arrow[ld, "f'"]\\
     & Y 
    \end{tikzcd}
\end{equation*}
Note that $f'_{*}\omega_{f'}=f_{*}\omega_{f}=\tilde{f}_*\omega_{\tilde{f}}$ as $X$ and $X'$ are both smooth. 
Now we define $M_f$ the smallest sub-Hodge module of $\mathcal{H}^0\tilde f_+\Q^H_{\tilde X}$
%$\mathbf{R}^0\tilde{f}_+ (\cO_{\tilde{X}},F)$
that contains $\tilde{f}_*\omega_{\tilde{f}}$.
More precisely, due to the semi-simplicity of pure Hodge modules (see \cite[\S 5]{Sa88}), we define $\cM$ the sub-pure Hodge module of $\mathcal{H}^0\tilde f_+\Q^H_{\tilde X}$  that consists of all the simple sub-pure Hodge modules that intersect $\tilde{f}_*\omega_{\tilde{f}}$ non-trivially. Thanks to the decomposition theorem of the direct image of pure Hodge modules under projective morphism \cite[Th\'eor\`eme 1 and Corollaire 3]{Sa88} (see also \cite[\S16]{Sc14}) and the equivalence in \cite[3.21]{Sa90}, we have that both $\mathcal{H}^0 f_+\Q^H_{X}$ and $\mathcal{H}^0f'_+\Q^H_{X'}$ are direct summands of $\mathcal{H}^0\tilde f_+\Q^H_{\tilde X}$, and all three pure Hodge modules share the same lowest filtered piece. As a consequence, we conclude that $M_f$ satisfies the conditions.
\end{proof}

\begin{prop}\label{P:bir invar of M}
%Fix two birationally equivalent projective fibrations 
Fix two projective morphisms of log smooth pairs
$$f_i:(X_i, D^{X_i})\to (Y, D^Y), \text{ } i=1, 2,$$
%and $(X_i, D^{X_i}), (Y, D^Y)$ being log smooth pairs and
with $D^Y$ a smooth divisor (but not necessarily irreducible) and $X_1$ and $X_2$ birational. Assume that $D^{X_i}=f^{-1}_iD^Y$ (in particular, $D^{X_i}$ are SNC divisors) and $f_i$ are smooth over $Y\setminus D^Y$, for $i=1,2$. Recalling the notations in \S \ref{SS:Direct image of graded log D-modules}, we can find a graded $\sA^\bullet_{(Y, D^Y)}$-module $\hat{\cM}$, which is a direct summand of both $\cM_{f_i}$ for $i=1,2$. Furthermore, all of $\hat{\cM}, \cM_{f_i}$ share the same lowest graded piece.
\end{prop}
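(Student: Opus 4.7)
The strategy is to produce $\hat{\cM}$ as the associated graded of a common direct-summand pure Hodge module, obtained by applying Theorem \ref{T:common HM} over $U^Y:=Y\setminus D^Y$ and then extending across $D^Y$ via the upper canonical extension.

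Set $U^{X_i}:=f_i^{-1}(U^Y)=X_i\setminus D^{X_i}$. Since each $f_i$ is smooth over $U^Y$ with $D^{X_i}=f_i^{-1}D^Y$, the restriction $f_i^o\colon U^{X_i}\to U^Y$ is a smooth projective morphism between smooth varieties, and $U^{X_1}$ and $U^{X_2}$ are birational. The first step is to apply Theorem \ref{T:common HM} to $f_1^o$, using $f_2^o$ as a birational model; this supplies a pure Hodge module $M$ on $U^Y$---in fact a VHS, since each $f_i^o$ is smooth projective---which is a direct summand of $\mathcal{H}^0(f_i^o)_+\Q^H_{U^{X_i}}$ for $i=1,2$, with lowest filtered piece $(f_i^o)_*\omega_{f_i^o}$ in both cases.

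The next step is to extend $M$ across $D^Y$. By Theorem \ref{thm:logHtdH}, each $\mathcal{H}^0(f_i^o)_+\Q^H_{U^{X_i}}$ extends canonically to a mixed Hodge module on $Y$, and the underlying filtered log $\sD_Y$-module of that extension is the upper canonical extension $V^{\ge 0}_{f_i}$, whose associated graded is $\cM_{f_i}$. The decomposition theorem for pure Hodge modules, together with Saito's strictness of the Hodge filtration under morphisms of pure Hodge modules, imply that the direct-summand inclusion $M\hookrightarrow \mathcal{H}^0(f_i^o)_+\Q^H_{U^{X_i}}$ on $U^Y$ extends to a direct-summand inclusion of Hodge modules on $Y$. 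Taking underlying filtered log $\sD_Y$-modules, the upper canonical extension $V^{M,\ge 0}$ of $M$ becomes a direct summand of $V^{\ge 0}_{f_i}$, and passing to associated gradeds yields $\hat{\cM}:=\Gr^F V^{M,\ge 0}$ as a direct summand of $\cM_{f_i}$ as graded $\sA^\bullet_{(Y,D^Y)}$-modules---the $\sA^\bullet_{(Y,D^Y)}$-action being induced by the log $\sD_Y$-module structure on the upper canonical extension.

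Finally, by Proposition \ref{P:iso M to relative DR} and the definition of $F_\bullet\DR_f$, the lowest graded piece of $\cM_{f_i}$ is $(f_i)_*\omega_{f_i}$; restricted to $U^Y$ this agrees with $(f_i^o)_*\omega_{f_i^o}$, which by Theorem \ref{T:common HM} coincides with the lowest filtered piece of $M$. Since upper canonical extension is compatible with the Hodge filtration over a big open subset of $Y$, the lowest graded pieces of $\hat{\cM}$ and of each $\cM_{f_i}$ coincide there, which by Remark \ref{R:codim 2} is enough. The principal technical hurdle is verifying the compatibility between the decomposition theorem on $U^Y$ and the Hodge-module extension to $Y$---namely that the Hodge-module summand decomposition passes faithfully through the upper canonical extension, both as filtered objects and as log $\sD_Y$-modules---which ultimately rests on Saito's theory of Hodge modules of normal crossing type, the same ingredient powering Theorem \ref{thm:logHtdH}.
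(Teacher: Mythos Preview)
Your proposal is correct and follows essentially the same route as the paper: produce a common VHS summand $V$ on $U^Y=Y\setminus D^Y$ via Theorem~\ref{T:common HM}, take its upper canonical extension, and define $\hat{\cM}$ as the associated graded. The only cosmetic difference is that the paper applies Theorem~\ref{T:common HM} to the full morphisms $f_i\colon X_i\to Y$ and then restricts the resulting Hodge module to $U^Y$, whereas you apply it directly to the smooth projective restrictions $f_i^o\colon U^{X_i}\to U^Y$; either way one lands on the same VHS $V$.

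One small simplification: you invoke the decomposition theorem and strictness to carry the direct-summand splitting across $D^Y$, but this is more machinery than needed. Once you have a direct-sum decomposition of polarizable VHS on $U^Y$, the upper canonical extension (Deligne's extension with residues in $[0,1)$) is functorial and additive, and the extended Hodge filtration is defined summand by summand; taking $\Gr^F$ then preserves the splitting as graded $\sA^\bullet_{(Y,D^Y)}$-modules. The paper's proof is correspondingly terse, simply citing Theorem~\ref{thm:logHtdH}(3) for the identification $\cM_{f_i}\simeq\Gr^F V_{f_i}^{\ge 0}$ and letting functoriality do the rest.
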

\begin{proof}
Recall that $\cM_{f_i}$ are isomorphic to the associated graded modules of the upper-canonical extension along $D^Y$ of the VHS given by the restriciton of $\mathcal{H}^0f_{i+}\Q^H_{X_i}$ on $Y\setminus D^Y$
%,  with the real part of the eigenvalues of the residues are in $[0,1)$. 
(see Theorem \ref{thm:logHtdH}(2) and (3)).
By the previous theorem, we get a pure Hodge module $M_{f_1}$ that corresponds to a VHS on $Y\setminus D^Y$, called $V$. 
Now we set $\hat{\cM}$ to be the associated graded module of the canonical extension along $D^Y$ of $V$, with the real part of the eigenvalues of the residues in $[0,1)$. 
\end{proof}

%\subsection{Sub-higgs sheaves from a section}
\subsection{Construction of Higgs sheaves from a section}\label{SS:Sub higgs sheaves from a section}
Suppose that we have the following diagram of projective morphisms of log smooth pairs: $$
\begin{tikzcd}
(Z,D^Z)\arrow[r,"\pi"]\arrow[dr,"g"] &(X,D^X)\arrow[d,"f"]\\  &(Y,D^Y),
\end{tikzcd}
$$
with $\pi$ generically finite, and both $f$ and $g$ being log smooth. By forgetting the horizontal part of $D^Z$ respect to $g$, we denote $g^0:(Z, D^Z_v)\to (Y, D^Y)$. Note that, by Lemma \ref{L:log smooth equivalence} and Lemma \ref{L:log smooth out of small}, $g^0$ is log smooth at least over a big open subset of $Y$. For simplicity, in the rest of this subsection, we assume $g^0$ is log smooth by removal a small closed subset of $Y$.

We have a natural morphism $\pi^*\Omega_X(\log D^X)\to \Omega_Z(\log D^Z),$ which induces a natural graded morphism of graded complexes
\begin{equation}\label{E:mor of rel DR induced by mor}
    \pi^*\Gr_\bullet \DR_f\to \Gr_\bullet \DR_{g}.
\end{equation}

For invertible sheaves $\cL$ and $\cA$ on $X$ and $Y$ respectively, we assume the following two assumptions:\\
(Assumption. 1)  $H^0(Z, \pi^*\mathcal{L}\otimes g^*\cA^{-1})\neq 0.$\\
(Assumption. 2) the support of the effective Cartier divisor $(\theta)$ contains $D^Z_h$.

By (Assumption. 1), fixing one non-trivial section 
$$\theta\in H^0(Z, \pi^*\mathcal{L}\otimes g^*\cA^{-1})\neq 0,$$  
we have an induced inclusion
$$\pi^*\mathcal{L}^{-1} \to g^*\cA^{-1}.$$
We then have an induced graded morphism 
\begin{equation}\label{E:mor bet relative log from cyclic}
    \pi^*(\Gr_\bullet \DR_f\otimes \mathcal{L}^{-1})\to \Gr_\bullet \DR_{g} \otimes g^*\cA^{-1}.
\end{equation}
as the composition of the natural morphisms
\[\pi^*(\Gr_\bullet \DR_f\otimes_\sO \mathcal{L}^{-1})\to\Gr_\bullet \DR_{g} \otimes_\sO \pi^*\cL^{-1}\to \Gr_\bullet \DR_{g} \otimes_\sO g^*\cA^{-1}.\]
%This induces a morphism $\mathcal{M}_{f,\mathcal{L}}\to \mathcal{M}_{g, g^*\cA},$ as graded $\sA^\bullet_{(Y, D^Y)}$-modules by Proposition \ref{P:iso M to relative DR}. 
%Another way to get this morphism has been shown in \cite[Proposition 11.2]{PS17}.

By (Assumption. 2), the morphism (\ref{E:mor bet relative log from cyclic}) naturally factors through 
$$\pi^*(\Gr_\bullet \DR_f\otimes \mathcal{L}^{-1})\to \Gr_\bullet \DR_{g}(-D^Z_h) \otimes g^*(\cA^{-1}).$$
Note that the natural inclusion $\Omega_Z(\log D^Z)(-D^Z_h)\to \Omega_Z(\log D^Z_v)$ induces a natural inclusion of complexes:
$$\Gr_\bullet \DR_{g}(-D^Z_h)\to \Gr_\bullet \DR_{g^0}.$$
We then have an induced morphism 
$$\pi^*(\Gr_\bullet \DR_f\otimes \mathcal{L}^{-1})\to \Gr_\bullet \DR_{g^0} \otimes g^*\cA^{-1}.$$
This further induces a morphism 
$$\eta_\theta:\mathcal{M}_{f,\mathcal{L}}\to \mathcal{M}_{g^0, g^*\cA},$$ 
as graded $\sA^\bullet_{(Y, D^Y)}$-modules by Proposition \ref{P:iso M to relative DR}. Furthermore, the lowest graded piece of $\eta_\theta$:
$$g_*\pi^*(\omega_f\otimes \cL^{-1})\to g_*\omega_{g^0}\otimes \cA^{-1},$$
is induced by the morphism
$\pi^*(\omega_f\otimes \cL^{-1})\to \omega_{g^0}\otimes g^*\cA^{-1}$. Since $\pi^*(\omega_f\otimes \cL^{-1})\to \omega_{g^0}\otimes g^*\cA^{-1}$ is induced by $\theta$ and hence injective, by the left exactness of $g_*$, we see that 
$$g_*\pi^*(\omega_f\otimes \cL^{-1})\to g_*\omega_{g^0}\otimes \cA^{-1},$$
is also injective.

\section{Geometric construction}\label{S:Geo const}
%\subsection{Associate to a stable family with maximal variation}
\subsection{Stable reduction for families with arbitrary variation}
\label{SS:Ass to max}
In \cite[\S 4]{WW19}, we introduced stable reduction for log smooth families with maximal variation. In this section, we discuss stable reduction for log smooth families with arbitrary variation.

Suppose that $f_V\colon (U,D^U) \to V$ is a log smooth family of projective log smooth pairs of log general type, with $(U,D^U)$ being klt and $V$ smooth. Consider the relative canonical model 
\[f_{V,c}\colon (U_c, D^{U_c})\to V.\]
We write $v$ the volume of $K_{U_{c,y}}+D^{U_c}_y$ for $y\in V$ (by invariance of pluri-genera, $v$ is constant over $V$). Let $I$ be a finite coefficient set $I$ closed under addition and containing the coefficients of $D^{U_c}$. We consider the coarse moduli space, denoted by $\fM$, for stable log varieties of a fixed dimension, volume $v$ and the coefficient set $I$. Let us refer to \cite[\S 6]{KP16} for the construction of $\fM$ and related properties. It is proved in \emph{loc. cit.} that $\fM$ is a projective reduced scheme and the corresponding moduli stack is an Deligne-Mumford stack. The relative canonical model $f_{V,c}$ (, which is a stable family thanks to invariance of pluri-genera in the log smooth case \cite[Theorem 4.2]{HMX18},) induces a moduli map $V\rarr \fM.$
Since $\fM$ is projective, we take a projective compactification $Y$ of $V$ so that $E^Y=Y\setminus V$ is normal crossing and the moduli map extends to a projective map $Y\to\fM$ (we probably need to replace $f_V$ by a birational model).  
\begin{prop}\label{P:stable reduction}
Under the above construction, replacing $Y$ by a further resolution and removing a small closed subset, we can construct the following commutative diagram
\begin{equation}\label{E:reddiagram}
\begin{tikzcd}
(X_c,D^{X_c}) \arrow[d, "f_c"]\arrow[drr, phantom, "\square_1"] & (X'_{1,c}, D^{X'_{1,c}})\arrow[rd, "f'_{1,c}"]\arrow[l, swap, "\psi_c"]\arrow{rr}{\rho}& &(X'_{2,c}, D^{X'_{2,c}})\arrow[dr, phantom, "\square_2"] \arrow[ld, "f'_{2,c}"]\arrow[r,"\eta_c"] & (X^\sharp_c, D^{X^\sharp_c})\arrow[d, "f^\sharp_c"]  \\
Y & & Y'\arrow[ll, "\psi"] \arrow[rr, "\eta"] & & Y^\sharp 
\end{tikzcd}
\end{equation}
such that
\begin{enumerate}
\setcounter{enumi}{-1}
\item $(X^\bullet_c,D^{X_c^\bullet})$ are all klt pairs with $\bullet= \emptyset, '_1, '_2, ^\sharp$;
   \item $f^\sharp_c$ is a stable family with maximal variation;
   \item the square $\square_1$ is Cartesian over $V$;
   \item the square $\square_2$  is Cartesian;
   \item $\eta$ is a dominant smooth morphism, not necessarily proper;
   \item $\psi$ is finite and flat; 
   \item  for all sufficiently positive and divisible $m$, we have isomorphisms
   \begin{align*}
          f'_{2,c*}\omega^m_{X'_{2,c}/Y'}(mD^{X'_{2,c}}) &\simeq \eta^* f^\sharp_*\omega^m_{X^\sharp_c/Y^\sharp}(mD^{X^\sharp_c});\\
       \det f'_{2,c*}\omega^m_{X'_{2,c}/Y'}(mD^{X'_{2,c}}) &\simeq \eta^*\det f^\sharp_*\omega^m_{X^\sharp_c/Y^\sharp}(mD^{X^\sharp_c});
   \end{align*}
   \item for sufficiently positive and divisible $N_m$ (depending on $m$), we can find a line bundle $\cA_m$ on $Y$ such that 
   $$\psi^*\cA_m \simeq (\det f'_{2,c*}\omega^m_{X'_{2,c}/Y'}(mD^{X'_{2,c}}))^{N_m}.$$
\end{enumerate}
\end{prop}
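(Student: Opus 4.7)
The plan is three-fold: first, build a ``moduli model'' $f^\sharp_c$ of maximal variation over an auxiliary smooth base $Y^\sharp$ extracted from the moduli map $Y\to\fM$; second, form a common finite flat cover $Y'$ interpolating between $Y$ and $Y^\sharp$ with a smooth map $\eta\colon Y'\to Y^\sharp$; and finally, verify (6) by smooth base change along $\eta$ and (7) by Galois descent along $\psi$. The Cartesian condition for $\square_2$ is forced by taking $X'_{2,c}$ to be the literal pull-back of $X^\sharp_c$, while $\square_1$ will only be Cartesian over $V$ because the base change of $X_c$ along $\psi$ is not klt on the boundary in general and must be replaced by its relative canonical model.

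For the construction, I would first apply Stein factorization to the extended moduli map $Y\to\fM$, obtaining $Y\xrightarrow{s} M\hookrightarrow \fM$ with $M$ normal and $s$ having connected fibers. Applying the stable reduction theory for KSBA moduli developed in \cite{KP16} to (a log resolution of) a suitable finite Galois cover of $M$, one obtains a smooth $Y^\sharp$ together with a stable family $f^\sharp_c\colon (X^\sharp_c,D^{X^\sharp_c})\to Y^\sharp$ of maximal variation with $(X^\sharp_c,D^{X^\sharp_c})$ klt, giving (1). Take $Y'$ to be a log resolution of the normalization of the main component of $Y\times_M Y^\sharp$; then $\psi\colon Y'\to Y$ is finite, and after removing a small closed subset of $Y$ the total space $Y'$ is Cohen-Macaulay, so miracle flatness yields (5). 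The map $\eta\colon Y'\to Y^\sharp$ is the pull-back of the generically smooth $s$, hence smooth after removing a further small closed subset by Lemma~\ref{L:log smooth out of small}, giving (4). Now define $(X'_{2,c},D^{X'_{2,c}})$ as the base change of $(X^\sharp_c,D^{X^\sharp_c})$ along the smooth $\eta$, which preserves klt-ness and makes $\square_2$ Cartesian; and define $(X'_{1,c},D^{X'_{1,c}})$ as the relative canonical model over $Y'$ of a log resolution of $(X_c,D^{X_c})\times_Y Y'$. Over $\psi^{-1}(V)$ both families agree with the pull-back of $f_{V,c}$, yielding the partial Cartesian property (2) and inducing the birational map $\rho$ by universality of the relative canonical model.

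For (6), smooth base change for higher direct images of log relative pluricanonical sheaves applied to $\eta$ gives the first isomorphism, and compatibility with $\det$ is formal. For (7), passing to a Galois closure of $\psi$ (which does not affect the earlier conditions), we may assume $\psi$ is Galois with group $G$; the line bundle $\cL_m := \det f'_{2,c*}\omega^m_{X'_{2,c}/Y'}(mD^{X'_{2,c}})$ is then $G$-invariant (using (6) together with $G$-equivariance of $\eta$ by construction) up to a character of $G$, and choosing $N_m$ divisible both by the order of this character and by $|G|$ produces a genuinely $G$-equivariant line bundle descending to the desired $\cA_m$ on $Y$. The main obstacle will be verifying that $\square_1$, Cartesian only over $V$, interacts sufficiently well with the fully Cartesian $\square_2$ through $\rho$ for the applications to constructing refined Viehweg-Zuo sheaves in Section~\ref{S:refined VZ}; in particular one must check that the birational modifications needed to make $X'_{1,c}$ klt do not destroy the comparison of pluricanonical push-forwards that links the two models. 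A secondary technical point is the flatness of $\psi$, which forces one to delete the (small) closed locus of $Y$ where the ramification of $Y^\sharp\to M$ pulled back to $Y$ fails to sit inside a simple normal crossing divisor.
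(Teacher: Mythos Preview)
Your construction has a genuine gap at condition (4), the smoothness of $\eta$. You argue that $\eta$ is the base change of the generically smooth Stein map $s\colon Y\to M$ along a finite $Y^\sharp\to M$, and then invoke Lemma~\ref{L:log smooth out of small} to make it smooth after removing a small closed subset. This fails on two counts. First, that lemma concerns \emph{log} smoothness of morphisms of log smooth pairs and yields only log smoothness over a big open subset, not smoothness. Second, and more fundamentally, the non-smooth locus of $s$ in $M$ can be a divisor, so the non-smooth locus of $\eta$ in $Y^\sharp$ is still a divisor, and deleting a codimension-$\ge 2$ subset cannot repair this. The paper handles this with substantially more input and in a different order: it first builds $Y'$ as a resolution of the main component of $Y\times_\fM S$ for the finite cover $S\to\fM$ of \cite[Theorem 16.6]{LMB} carrying a stable family, then defines $Y^\sharp$ via the Stein factorization of $Y'\to S$, flattens $\eta\colon Y'\to Y^\sharp$ by Raynaud-Gruson, and finally performs a Kawamata cover of $Y^\sharp$ together with semistable reduction in codimension one \cite{AK00}, so that after removing a small closed subset $\eta$ becomes genuinely smooth. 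This flattening-plus-semistable-reduction package is the missing technical ingredient in your sketch.

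There are also smaller divergences worth noting. For (7) the paper does not use Galois descent; it instead compares $\det f'_{2,c*}\omega^m(mD)$ with $\psi^*\det f_{c*}\omega^m(mD)$ and shows the discrepancy divisor on $Y'$ is pulled back from $Y$ by a local curve-and-cyclic-cover argument in the style of \cite[Corollary ix)]{VZ03}, which avoids any Galois hypothesis on $\psi$. Your approach would need the determinant line bundle on $Y^\sharp$ to be $G$-invariant (plausible, since a power of $\lambda_m$ descends to $\fM$), but ``passing to a Galois closure of $\psi$'' can disturb (4) unless the Galois structure is built in from the outset. Finally, the paper takes $(X'_{1,c},D^{X'_{1,c}})$ to be the normalization of the main component of $X_c\times_Y Y'$ with the pulled-back boundary, which is klt by \cite[Proposition 5.20]{KM98}, rather than a relative canonical model; either choice works, but the former is more direct and makes the Cartesian-over-$V$ condition (2) immediate.
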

\begin{proof}
By \cite[Theorem 16.6]{LMB}, $\fM$ has a finite covering $S\to \fM$ which is induced by a stable family over $S$ in the moduli stack of $\fM$. We then take $Y'$ to be a desingularization of the main component of $Y\times_\fM S$ (the component dominant $Y'$). Then we got a generic finite map $\psi\colon Y'\to Y$. 
%By replacing $Y'$ by a generic finite covering of itself, we can assume $\psi$ is Galois over the \'etale locus. By the Raynaud-Gruson flattening theorem (see \cite[Th\'eor\`em 5.2.2]{RG71}), after replacing $\psi$ by a birational model (hence we also replace $f_c$ by a birational model after base-change), we can assume that $\psi$ is flat and hence finite. Replacing $Y$ by a further resolution, we can assume that $\psi$ is branched along a normal crossing divisor containing $D^Y$. Then using Kawamata covering (see for instance \cite[Corollary 2.6]{Vie95}) we obtain a finite covering $Y''\to Y'$ with $Y''$ being smooth projective. We then replace $Y'$ by $Y''$ and $\psi$ by the composition of the two finite coverings.
We then take the Stein factorization of the induced morphism $Y'\to S$ and obtain a fibration $\eta: Y'\to Y^\sharp$. Thanks to Raynaud-Gruson flattening theorem (\cite[Th\'eor\`em 5.2.2]{RG71}), after replacing $Y^\sharp$ by a further resolution, we can assume that $\eta\colon Y'\to Y^\sharp$ is a flat fibration, with $Y^\sharp$ being projective and smooth. Note that $Y'$ might have more than 1 components, and we then replace $Y'$ by the main component.
%After this, we can assume that the largest reduced divisor $E$ with codim$(\eta(E))\ge 2$ has an image in $Y$ (under $\psi$) of codimension at least 2. This property remains true if we replace $Y^\sharp$ by a finite covering. 
Now we take a Kawamata covering ${Y^\sharp}'\to Y^\sharp$ so that there exists a $Y''$, a desingularization of the main component of $Y'\times_{Y^\sharp}{Y^\sharp}'$ satisfying that $Y''\to {Y^\sharp}'$ is semistable in codimension 1; semistable means that the morphism is flat with reduced fibres, see for instance \cite{AK00} for a stronger result. In particular, we can remove a small closed subset of $Y''$ to make the morphism be smooth. Replace $Y'$ by $Y''$ and $Y^\sharp$ by ${Y^\sharp}'$. To finish the construction of the bases, we remove the largest reduced divisor $F$ on $Y'$ such that $\codim \psi(F)\ge 2$, and replace $Y$ by the image of $Y'$ (with $F$ removal) under $\psi$. Note that we only removed a small subset of $Y$.
%As we mentioned above, we still have that the largest reduced divisor $E$ with codim$(\eta(E))\ge 2$ has a image in $Y$ of codimension at least 2. We also know that $\psi$ becomes a generically finite map. We then take $C_Y$ to be the union of $\psi(E)$ and the non-flat locus of $\psi$, which is of codimension at least 2.
%In particularly, Part (4) and (5) follows. 

Now we start to construct the $X^\bullet$ level. We first take a klt log smooth pair $(X,D^X)$ with a projective morphism $f_Y:(X,D^X)\to Y$ so that $f_Y|_V=f_V$. Since general fibers of $f$ are of log general type, by \cite[Theorem 1.2]{BCHM} we take $f_c$ to be the relative log canonical model of $f_Y$. In particular, $(X,D^{X_c})$ is a klt pair. We then set $X'_{1,c}$, the main component of the normalization of $X_c\times_Y Y'$, with boundary divisor $D^{X'_{1,c}}$ given by $\omega_{X'_{1,c}}(D^{X'_{1,c}})= \psi_c^* \omega_{X_c}(D^{X_c})$. 
Since $\psi$ is finite, so is $\psi_c$, hence $(X'_{1,c}, D^{X'_{1,c}})$ is klt by \cite[Proposition 5.20]{KM98}. The induced morphism $Y^\sharp \to S$ is generic finite, so it induces a stable family $f_c^\sharp:(X^\sharp_c, D^{X^\sharp_c})\to Y^\sharp$. $f'_{2,c}: (X'_{2,c}, D^{X'_{2,c}})\to Y'$ is defined as the stable family induced by $f^\sharp_c$ under the smooth base change $\eta$. 
Note that by construction, we have that $f'_{1,c}$ and $f'_{2,c}$ coincide on $\psi^{-1} V$, hence we can blowup $(X'_{1,c}, D^{X'_{1,c}})$ to make $\rho$ a morphism, without changing $f'_{1,c}$ over $\psi^{-1} V$. Now the morphisms satisfy (1) to (5), and (6) follows by the flat base change. 

%For the proof of (7), it has been shown in \cite{VZ03} in a similar situation, but we still briefly recall it here.
To prove (7), we use the argument similar to the proof of \cite[Corollary ix)]{VZ03}.
Note that, over an open set $V_0\subset Y$, since $f'_{2,c}$ can be induced by $f_c$ using a flat base change $\psi$, we can canonically identifying $\psi^*\det f_*\omega^m_{X_{c}/Y}$ and $\det f'_{2,c*}\omega^m_{X'_{2,c}/Y'}(mD^{X'_{2,c}})$ over $V'_0:=\psi^{-1}V_0$, and set $B$ the divisor support on $Y'\setminus V'_0$, satisfying
$$\psi^*\det f_*\omega^m_{X_{c}/Y}(mD^{X_c})= \det f'_{2,c*}\omega^m_{X'_{2,c}/Y'}(mD^{X'_{2,c}})(B),$$
that is also canonically defined. Now we only need to show that $B$ is the pullback of some $\Q$-divisor on $Y$, so only need to show that, for any two components $B_1$ and $B_2$, if they have a same image under $\psi$, then they share a same coefficient. Now we only need to show that over a general point $p$ of $\psi(B_1)$. Take a local curve $Q$ that only intersect with $\psi(B)$ at $p$. Now $f_{Q}$, the restriction of $f_c$ onto $Q$, is stable over $Q\setminus p$, so we can find a cyclic cover $\pi: \tilde Q\to Q$ that is totally ramified at $p$, such that the induced new family $\tilde f_{Q}$ over $\tilde Q$ is stable and compatible with the base change of $f_Q$ over $Q\setminus p$. Denote $Q'$ any irreducible component of $\psi^{-1} Q$, and let $\tilde Q'=\tilde Q\times_Q Q'$. Although we can induce two stable families over $\tilde Q'$, but since they are the same over general point, hence they are the same family, by the properness of the moduli functor, and we denote the family by $f_{\tilde Q'}$. Note that $f_{\tilde Q}, f_{Q'}$ and $f_{\tilde Q'}$ are all compatible with flat base change, so we only need to verify the proposition on $f_Q$ and $f_{\tilde Q}.$ This is true, since $\pi$ is totally ramified at $p$.
\end{proof}

Since $f^\sharp_c$ is a stable family with maximal variation, we have $\det f^\sharp_{c*}\omega^m_{X_c^\sharp/Y^\sharp}(mD^{X_c^\sharp})$ is big by \cite[Theorem 7.1]{KP16}.
We now fix a pair of $m$ and $N_m$ as in the previous proposition. To simplify notation, we set 
\begin{align*}
    \cA^\sharp&:= (\det f^\sharp_{c*}\omega^m_{X_c^\sharp/Y^\sharp}(mD^{X_c^\sharp}))^{N_m}\\
    \cA'&:= (\det f'_{2,c*}\omega^m_{X'_{2,c}/Y'}(mD^{X'_{2,c}}))^{N_m}\\
    \cA&:=\cA_m.
\end{align*}
 %we find three line bundles $\cA^\bullet$ on $Y^\bullet$, respectively, satisfying
By Part (6) and (7) in Proposition \ref{P:stable reduction}, we have
\begin{equation}\label{eq:3lbdle}
    \psi^*\cA=\cA'=\eta^*\cA^\sharp \textup{ and }\kappa (\cA) = \kappa (\cA')= \kappa(\cA^\sharp)=\Var(f_V).
\end{equation}
%and 
%\begin{equation}
%\kappa (\cA) = \kappa (\cA')= %\kappa(\cA^\sharp)=\Var(f_V).
%\end{equation}

%\subsection{Viewhweg's fiber-product trick}

%\subsection{Further log resolutions}
\subsection{Birational refinement of the stable reduction}\label{SS:Further log resolutions}
%To connect the two squares $\square_1$ and $\square_2$ in Proposition \ref{P:stable reduction}, we need to further resolve $X^\bullet_c$, and add vertical boundary divisors on $X^\bullet$,  with $\bullet=\emptyset, '_1$, $'_2$ and $\sharp$, to make morphisms log smooth, outside of those vertical boundary divisors, which will be used to construct Viehweg-Zuo sheaf in later section.
In this subsection, we refine the stable reduction in Proposition \ref{P:stable reduction}, which will be used to construct Viehweg-Zuo sheaf in later section. 
We first show the following lemma.
%, which connect the pushforward of the relative pluri-log canonical bundles of birationally equivalent fibrations of log families.

\begin{lemma}
Assume that we have two log smooth projective morphisms 
$$f_i: (X_i, D^{X_i})\to (Y, D^Y), \text{with } i=1,2,$$
whose generic fibers share the same birational equivalent model, i.e. we have log resolutions over $\xi$, $\phi_{i,\xi}: (X_\xi, D^{X_\xi})\to (X_{i,\xi}, D^{X_{i,\xi}}),$ satisfying $$\phi^*_{i,\xi}\omega_{X_\xi}(D^{X_{i,\xi}})\simeq \omega_{X_\xi}(D^{X_\xi}),$$ 
where $\xi$ is the generic point of $Y$. After adding components to $D^Y$ if necessary (cf. \S \ref{subsect:3.4}), forgetting the information over a small closed subset of $Y$, and replacing $D^{X_i}_v$ by $f^{-1}(D^Y)$, we have $f_{1*}\omega_{f_1}^m= f_{2*}\omega_{f_2}^m$.
\end{lemma}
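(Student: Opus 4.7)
The plan is to construct a common log resolution $g\colon(X,D^X)\to(Y,D^Y)$ dominating both $(X_i,D^{X_i})$ via projective birational morphisms $\phi_i\colon X\to X_i$ over $Y$, and then to identify both pluri-canonical direct images $f_{i*}\omega_{f_i}^m$ with $g_*\omega_g^m$ using the projection formula applied to each $\phi_i$.

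First, starting from the given generic log resolutions, I would spread out and take further blowups as needed to obtain, over a big open subset of $Y$, a log smooth projective morphism $g\colon(X,D^X)\to(Y,D^Y)$ and projective birational morphisms $\phi_i\colon X\to X_i$ over $Y$ whose restrictions over the generic point $\xi$ agree with the given $\phi_{i,\xi}$ (possibly after further blowups contained in the vertical locus). Let $T\subseteq Y$ denote the closed union of the images of all $\phi_i$-exceptional divisors together with the loci over which any of $f_i$, $g$, or $\phi_i$ fails to be log smooth. Adding the irreducible components of $T$ to $D^Y$ (permitted by the hypothesis, cf.\ \S\ref{subsect:3.4}) and simultaneously replacing $D^{X_i}_v$ by $f_i^{-1}(D^Y)$ and $D^X_v$ by $g^{-1}(D^Y)$, I may assume: (a) every $\phi_i$-exceptional prime divisor on $X$ is a component of $D^X_v$ with coefficient $1$; (b) every prime component of $D^{X_i}_v$ and of $D^X_v$ has coefficient $1$; (c) $g=f_i\circ\phi_i$ is log smooth.

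Next, I would analyze the $\Q$-divisor
\[
E_i\;:=\;(K_X+D^X)\;-\;\phi_i^*(K_{X_i}+D^{X_i})
\]
on $X$. By the generic pullback identity, $E_i$ restricts to zero on the generic fibre, hence it is supported on $g^{-1}(D^Y)=D^X_v$. A local computation at each prime of $D^X_v$, using the standard discrepancy formula for a projective birational morphism between smooth varieties and the log canonicity of the log smooth pair $(X_i,D^{X_i})$, shows that all coefficients of $E_i$ are non-negative, and that along the strict transforms of the prime components of $D^{X_i}_v$ the coefficient vanishes (since $\phi_i$ is an isomorphism at their generic points and the reduced-coefficient normalisation from step one makes both sides match). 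Hence $E_i$ is an effective $\Q$-divisor supported on the $\phi_i$-exceptional locus.

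Finally, choosing $m$ sufficiently divisible so that $mE_i$ is integral, the standard identity $\phi_{i*}\cO_X(F)=\cO_{X_i}$ for any effective $\phi_i$-exceptional integral divisor $F$, combined with the projection formula, yields
\[
\phi_{i*}\bigl(\omega_X^m(mD^X)\bigr)
=\phi_{i*}\bigl(\cO_X(mE_i)\otimes\phi_i^*\omega_{X_i}^m(mD^{X_i})\bigr)
=\omega_{X_i}^m(mD^{X_i}).
\]
Applying $f_{i*}$, using $f_i\circ\phi_i=g$, and twisting by $f_i^*\omega_Y^{-m}(-mD^Y)$ via the projection formula once more, I obtain $g_*\omega_g^m\simeq f_{i*}\omega_{f_i}^m$ for both $i=1,2$, whence $f_{1*}\omega_{f_1}^m=f_{2*}\omega_{f_2}^m$ as desired.

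The main obstacle is the bookkeeping in the first two steps: one must enlarge $D^Y$ and renormalise the vertical boundaries so that \emph{every} $\phi_i$-exceptional divisor is absorbed into $D^X_v$ with coefficient $1$, since this is precisely what upgrades the generic pullback identity to the effectivity and $\phi_i$-exceptionality of $E_i$ on all of $X$; the freedom in the hypothesis to enlarge $D^Y$ and to replace $D^{X_i}_v$ by $f_i^{-1}(D^Y)$ (leaving $f_{i*}\omega_{f_i}^m$ unchanged, by \S\ref{subsect:3.4}) is exactly the flexibility one needs.
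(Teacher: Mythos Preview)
Your proposal is correct and follows essentially the same approach as the paper: construct a common log resolution $(X,D^X)$ dominating both $(X_i,D^{X_i})$, enlarge $D^Y$ so that every $\phi_i$-exceptional divisor lies in $D^X_v$ with coefficient $1$, and then verify that the discrepancy $\Q$-divisor $E_i=(K_X+D^X)-\phi_i^*(K_{X_i}+D^{X_i})$ is effective and $\phi_i$-exceptional (using the generic pullback hypothesis to force $E_i$ vertical, and log canonicity of $(X_i,D^{X_i})$ together with the coefficient-$1$ normalisation to force effectivity), concluding via the projection formula. The paper's argument differs only in presentation: it phrases the exceptionality check by arguing directly that no $\phi_i$-exceptional prime can dominate $Y$, whereas you first observe $E_i$ is vertical and then peel off the non-exceptional vertical components; the content is the same.
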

\begin{proof}
According to the assumption, we can have a common log resolution, hence the following commutative diagram, with all pairs are log smooth, 
\begin{equation*}
\begin{tikzcd}
(X_1, D^{X_1})\arrow[rd, swap, "f_1"]& (X, D^{X})\arrow[l, "\phi_1"]\arrow[r,"\phi_2"]\arrow[d, "f"] &(X_2, D^{X_2})\arrow[ld, "f_2"]
\\
 & (Y, D^{Y}),
\end{tikzcd}
\end{equation*}
so that $D^X|_{X_\xi}=D^{X_\xi}$. We can assume that every irreducible component of $D^X$ is dominant over $Y$, i.e. the vertical part of $D^X$ over $Y$ is zero. 
After we extend $D^Y$ and forget a small closed subset of $Y$, we can assume that $f$ is smooth away from $D^Y$ without changing log smoothness of $f_1$ and $f_2$. We then replace the vertical parts of $D^{X_1}$, $D^X$ and $D^{X_2}$ by $f_1^{-1}(D^Y)$, $f^{-1}(D^Y)$ and $f_2^{-1}(D^Y)$ respectively.
%and extend the vertical boundaries by the inverse image of $D^Y$, to make all morphisms be morphisms of log smooth pairs (cf. \S \ref{subsect:3.4}). 
Using Lemma \ref{L:log smooth out of small}, we can assume that all vertical morphisms are projective log smooth morphisms.

To prove the statement, by symmetry we only need to show that $\phi_{1*}(\omega_X(D^X))^m=(\omega_{X_1}(D^{X_1}))^m$. Consider a Cartier divisor $B$ with
$$\cO(B)=(\omega_X(D^X))^{m}\otimes \phi_1^*(\omega_{X_1}(D^{X_1}))^{-m},$$
which is an $\phi_1$-exceptional divisor. Since $f$ is log smooth, any irreducible $\phi_1$-exceptional divisor $E$ on $X$ maps to $D^Y$ or dominates $Y$. By the assumption of $f_i$ over the generic point of $Y$, $E$ cannot dominant $Y$.
%If it dominates $Y$, then it is controlled by the assumption that is generic over $Y$, and hence it will not appear in $B$. 
Then $E$ maps to $D^Y$ and $E$ is $f$-exceptional. Hence the coefficient of $E$ in $D^X$ is $1$ by construction (see (Assumption. 0) in \S \ref{subsec:3.1}). Since $(X_1, D^{X_1})$ is log canonical (by (Assumption. 0) in \S \ref{subsec:3.1}), the coefficient of $E$ in $B$ is positive, which concludes the proof.
\end{proof}

\begin{prop}\label{P:stable reduction log smooth}
Assume that we have a commutative diagram (\ref{E:reddiagram}) as in Proposition \ref{P:stable reduction}, we can always find a log resolution $\pi:(X, D^X)\to (X_c, D^{X_c})$, so that we can have the following commutative diagram, allowing forgetting a small closed subset of $Y$, $Y'$ and $Y^\sharp$.
\begin{equation}\label{E:reddiagram log}
\begin{tikzcd}
(X,D^X) \arrow[d, "f"]& (X'_1, D^{X'_1})\arrow[dl, phantom, "\square_1"] \arrow[rd, swap, "f'_1"]\arrow[l, "\bar\psi"]& (X', D^{X'})\arrow[l, "\phi_1"]\arrow[r,"\phi_2"]\arrow[d, "f'"] &(X'_2, D^{X'_2})\arrow[dr, phantom, "\square_2"] \arrow[ld, "f'_2"]\arrow[r,"\bar\eta"] & (X^\sharp, D^{X^\sharp})\arrow[d, "f^\sharp"]
\\
(Y, D^Y) & & (Y', D^{Y'})\arrow[ll, "\psi"] \arrow[rr, "\eta"] & & (Y^\sharp, D^{Y^\sharp })
\end{tikzcd}
\end{equation}

such that
\begin{enumerate}
   \item all the log pairs are log smooth;
   \item $f$ is a projective compactification of $f_V$;
   \item all the downwards morphisms are log smooth, with vertical boundaries are reduced and the coefficients of horizontal boundaries are in $(0, 1)$;
   \item the square $\square_1$ is Cartesian over $Y\setminus D^Y$ and $\psi$ is \'etale over $Y\setminus D^Y$;
   \item the square $\square_2$  is Cartesian;
   \item all $\phi_\bullet$ are birational, with $\bullet=1,2$;
   \item $\bar\psi$ factors through the natural morphism $\psi_0:X'_0\to X$, where $X'_0$ is the normalization of the main component of $X\times_Y Y',$ and $\phi_0:X'_1\to X'_0$ is birational, and denote $f'_0:X'_0\to Y'$ the induced morphism; 
   \item for all $m$ sufficiently positive and divisible, we have isomorphisms 
    \begin{equation*}
            f'_{0*}\psi_0^*\omega^m_f\simeq f'_{1*}\bar\psi^*\omega^m_f\simeq f'_{1*}\omega^m_{f'_1} \simeq f'_*\omega^m_{f'}\simeq f'_{2*}\omega^m_{f'_2}\simeq  \eta^*f^\sharp_* \omega^m_{ f^\sharp};
    \end{equation*}
   \item denoting $\lceil f^\bullet\rceil:(X^\bullet, \lceil D^{X^\bullet}\rceil)\to (Y^\bullet, D^{Y^\bullet})$, with $\bullet= \emptyset, '_1, '_2, ^\sharp$, i.e. only considering the boundary with reduced structure, for all $m$ sufficiently positive and divisible, we have isomorphisms 
       \begin{equation*}
            f'_{0*}\psi_0^*\omega^m_{\lceil f\rceil}\simeq f'_{1*}\bar\psi^*\omega^m_{\lceil f\rceil}\simeq f'_{1*}\omega^m_{\lceil f'_1\rceil} \simeq f'_*\omega^m_{\lceil f'\rceil}\simeq f'_{2*}\omega^m_{\lceil f'_2\rceil}\simeq \eta^*f^\sharp_* \omega^m_{\lceil f^\sharp\rceil}.
    \end{equation*}
\end{enumerate}

\end{prop}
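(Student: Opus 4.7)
The plan is to start from the diagram (\ref{E:reddiagram}) of Proposition \ref{P:stable reduction} and construct log resolutions of each total space, arranged so that the resulting downward morphisms become log smooth while the generic fibres remain birationally equivalent. The preceding lemma then produces all the required isomorphisms of direct images of pluricanonical sheaves.

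First, I would enlarge the boundary divisors on the bases. Choose a SNC divisor $D^{Y^\sharp}$ on $Y^\sharp$ outside which $f^\sharp_c$ is smooth; take $D^{Y'}$ to be a SNC divisor containing $\eta^{-1}D^{Y^\sharp}\cup \psi^{-1}D^Y$; and enlarge $D^Y$ so that $\psi$ is \'etale over $Y\setminus D^Y$. These arrangements are possible after discarding small closed subsets of the respective bases, thanks to Proposition \ref{P:stable reduction}(4),(5). Next, build the total-space models: take a log resolution $\pi^\sharp:(X^\sharp, D^{X^\sharp})\to (X^\sharp_c, D^{X^\sharp_c})$ that is an isomorphism over the smooth locus of $f^\sharp_c$, with $D^{X^\sharp}_v = (f^\sharp)^{-1}D^{Y^\sharp}$ reduced and $D^{X^\sharp}_h$ the strict transform of $D^{X^\sharp_c}$ (coefficients still in $(0,1)$ by kltness). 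By Remark \ref{R:codim 2} and Lemma \ref{L:log smooth out of small}, upgrade generic log smoothness of $f^\sharp$ to log smoothness over a big open subset of $Y^\sharp$. Do the same for $\pi:(X,D^X)\to (X_c,D^{X_c})$ giving the compactification $f$ of $f_V$. Since $\eta$ is smooth (Proposition \ref{P:stable reduction}(4)), set $(X'_2, D^{X'_2}):=(X^\sharp, D^{X^\sharp})\times_{Y^\sharp} Y'$, which is automatically log smooth and realizes $\square_2$ as Cartesian. Let $X'_0$ be the normalization of the main component of $X\times_Y Y'$, and take a log resolution $\phi_0:(X'_1, D^{X'_1})\to X'_0$ making $f'_1$ log smooth (again via Lemma \ref{L:log smooth out of small}). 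Finally take $(X', D^{X'})$ to be a common log resolution of $X'_1$ and $X'_2$ producing $\phi_1, \phi_2$ and the log smooth $f'$. Conclusions (1)--(7) are then a matter of bookkeeping about the construction.

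The main obstacle, and the heart of the proof, is the verification of the pushforward identities (8) and (9). Here the preceding lemma is exactly the right tool. The key input is that over the generic point $\xi$ of $Y'$, the fibres of $f'_0, f'_1, f', f'_2$ are all birationally equivalent as log pairs with a common log crepant resolution: by Part (2) of Proposition \ref{P:stable reduction} the generic fibres of $(X'_{1,c})_\xi$ and $(X_c)_{\psi(\xi)}$ agree after pulling back along the finite flat $\psi$, while by the birationality of $\rho$ in (\ref{E:reddiagram}) and the uniqueness of the relative canonical model one has $(X'_{1,c})_\xi=(X'_{2,c})_\xi$ as the canonical model of this common fibre, which is in turn identified via $\eta_c$ with the corresponding fibre of $(X^\sharp_c)_{\eta(\xi)}$ by Part (3). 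Since all these pairs are klt, they share a common log crepant resolution in a neighbourhood of $\xi$, and applying the preceding lemma to each pair of log smooth morphisms in the chain $f'_0 \psi_0^*\omega_f$, $f'_1$, $f'$, $f'_2$, together with Part (6) of Proposition \ref{P:stable reduction} and the smooth base change along $\eta$, gives the entire chain of isomorphisms in (8). Identity (9) follows by running the proof of the preceding lemma \emph{verbatim} with each $D^{X^\bullet}$ replaced by $\lceil D^{X^\bullet}\rceil$: the argument there only uses that exceptional divisors which map into $D^Y$ have coefficient $1$ in $D^X$ and that $(X_1, D^{X_1})$ is log canonical, both of which are preserved (in fact strengthened) after rounding up. The technically most subtle point is ensuring that all five downward arrows can be made simultaneously log smooth in codimension $1$ on the respective bases, but this follows from successive applications of Lemma \ref{L:log smooth out of small} and the freedom to discard small closed subsets granted by Remark \ref{R:codim 2}.
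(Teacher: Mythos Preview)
Your outline follows the paper's approach closely: take log resolutions of the stable families in diagram~(\ref{E:reddiagram}), enlarge the base boundaries, invoke Lemma~\ref{L:log smooth out of small} to get log smoothness in codimension one, and use the preceding lemma for the middle identities in (8). Two points deserve attention.

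First, the horizontal boundaries $D^{X^\bullet}_h$ should be defined as the positive horizontal part of the crepant pullback $\pi^{\bullet*}(K_{X^\bullet_c}+D^{X^\bullet_c})-K_{X^\bullet}$, not merely the strict transform. With only the strict transform, exceptional divisors of negative discrepancy would be omitted from the boundary, and the hypothesis of the preceding lemma (existence of a common \emph{log crepant} resolution of the generic fibres) could fail between $f'_1$ and $f'_2$.

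Second, and more substantively, the preceding lemma does not give the second isomorphism $f'_{1*}\bar\psi^*\omega^m_f\simeq f'_{1*}\omega^m_{f'_1}$ in (8). That lemma compares the pushforwards of $\omega_{f_1}^m$ and $\omega_{f_2}^m$ for two log smooth morphisms to the \emph{same} base; here one is comparing the pullback sheaf $\bar\psi^*\omega_f^m$ with the intrinsic $\omega_{f'_1}^m$, and $f'_0$ is not log smooth. The paper handles this separately: since $\psi$ is \'etale off $D^Y$ one has $\psi^*\omega_Y(D^Y)\simeq\omega_{Y'}(D^{Y'})$, and the ramification of $\psi_0:X'_0\to X$ is contained in the reduced divisor $D^X_v$; hence the divisor $D^{X'_0}$ defined by $\psi_0^*\omega_X(D^X)=\omega_{X'_0}(D^{X'_0})$ has reduced vertical part, and the discrepancy $\omega_{X'_1}(D^{X'_1})\otimes\bar\psi^*\omega_X(D^X)^{-1}$ is effective and $\phi_0$-exceptional (using that $(X,D^X)$ is log canonical and $D^{X'_1}_v$ is reduced). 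This is the step your sketch elides. The first isomorphism is the projection formula along the birational $\phi_0$, as you implicitly use; the last is flat base change along $\eta$, as you say.
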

\begin{proof}
From Proposition \ref{P:stable reduction}, we first replace $(X^\bullet_c, D^{X^\bullet_c})$ by their log  resolutions $(X^\bullet, D^{X^\bullet})$, with $\bullet= \emptyset, '_1, '_2, ^\sharp$, 
and the horizontal part of the boundary divisor $D^{X^\bullet}$ is defined by taking the positive and horizontal part of $\omega^{-1}_{X^\bullet}\otimes \pi^{\bullet *}\omega_{X^\bullet_c}(D^{X^\bullet_c})$, where $ \pi^{\bullet}$ are the corresponding log resolutions. Since $(X^\bullet_c, D^{X^\bullet_c})$ are klt, so are $(X^\bullet, D^{X^\bullet})$. Moreover, by the construction of $f_c$ in Proposition \ref{P:stable reduction}, we can assume $f:(X,D^X)\to Y$ is a projective compactification of the initial log smooth family $f_V:(V,D^U)\to V$. In particular, we can keep track the initial boundary divisor $E^Y=Y\setminus V$ because ultimately we are interested in the log Kodaira dimension of $V$. 

We do not consider the vertical parts of $D^{X^\bullet}$ at this stage, but they will be specified after we fixed the boundary divisors on $Y^\bullet$. Since base changing by an étale or smooth morphism will keep the log smoothness, we can keep $\square_1$ and $\square_2$ are Cartesian over the assigned locus, and make $\bar\psi$ and $\bar\eta$ are morphisms of log smooth pairs. By the construction of $f'_{1,c}$ in Proposition \ref{P:stable reduction}, we have (7).

Set $D^Y$ and $D^{Y^\sharp}$ be divisors on $Y$ and $Y^\sharp$ respectively, so that $f$ and $f^\sharp$ are log smooth over the base outside of the boundary (cf. Lemma \ref{L:log smooth equivalence}). 
By expanding $D^Y$, we can assume that $\psi$ is \'etale over $Y\setminus D^Y$.
Set $D^{Y'}$ a divisor on $Y'$, so that it contains both $\psi^{-1} D^Y$ and $\eta^{-1} D^{Y^\sharp}$, and both $f'_1$ and $f'_2$ are log smooth over $Y\setminus D^{Y'}$. By Lemma \ref{L:log smooth out of small}, also Remark \ref{R:codim 2}, we can assume that $f'_1$ and $f'_2$ are log smooth, by removing a small subset of $Y'$. Hence, we can apply the previous lemma, by further expending $D^{Y'}$, constructing $f'$ as in the proof of the lemma, and replacing all vertical boundaries by the inverse image $D^{Y'}$, we have the identities of the third term to the fifth term of (8). Then, we replace $D^Y$ and $D^{Y^\sharp}$, by $\psi_* D^{Y'}$ and $\eta_* D^{Y'}.$ Now we expanding $D^{X^\bullet}$ by setting $D^{X^\bullet_v}=f^{\bullet-1}D^{Y^\bullet}$. Meanwhile we take further log resolutions of $(X^\bullet, D^{X^\bullet})$ if needed, to keep them being log smooth, but do not change the locus that are already log smooth, so that we can keep the Cartesian condition. At last, remove some small subsets of $Y$, $Y'$, and $Y^\sharp$, we have all the downwards morphisms are log smooth. 

\begin{equation*}
\begin{tikzcd}
(X,D^X) \arrow[d, "f"]& (X'_0, D^{X'_0})\arrow[rd, swap, "f'_0"]\arrow[l, "\bar\psi_0"]& (X'_1, D^{X'_1})\arrow[l, "\phi_0"]\arrow[d, "f'_1"]\arrow[ll, bend right=20, swap, "\bar\psi"]
\\
(Y, D^Y) & & (Y', D^{Y'})\arrow[ll, "\psi"].
\end{tikzcd}
\end{equation*}
The first identity in (8) is due to the projection formula. Note that $\psi$ is \'etale  outside of the boundary, so we have $\psi^*\omega_Y(D^Y)\simeq \omega_{Y'}(D^{Y'})$. Set $D^{X'_0}$ by $\bar\psi_0^*\omega_X(D^X)=\omega_{X'_0}(D^{X'_0})$. Since $\square_1$ is Cartesian over $Y\setminus D^Y$, so we have that the ramification locus of $\bar\psi_0$ is contained in $D^X_v$, which is reduced. This implies that $D^{X'_0}_v$ is also reduced and $\phi_{0*}(D^{X'_1})=D^{X'_0}$.  Hence the support of $(\bar\psi^{*}\omega_X(D^X))^{-1}\otimes \omega_{X'_1}(D^{X'_1}) $ is contained in the exceptional divisor respect to $\phi_0$, hence also contained in $D^{X'_1}_v$, and it is an effective $\Q$-divisor, due to $(X, D^X)$ is log canonical, and $D^{X'_1}_v$ is reduced. This implies that $f'_{1*}\bar\psi^{*}\omega^m_X(mD^X)\simeq f'_{1*}\omega^m_{X'_1}(mD^{X'_1})$, which concludes the second identity. 

The last identity of (8) can be deduced by flat base change, combining the fact that we have $\eta$ is smooth and $X'_2=X^\sharp \times_{Y^\sharp} Y'$, so $\bar\eta^* D^{X^\sharp}= D^{X'_2}$ and $\eta^*D^{Y^\sharp}=D^{Y'}.$

Since we only need that all log pairs are log canonical in proving all identities in (8), we get (9).
\end{proof}

\subsection{Viehweg's fiber product trick and construction of cyclic covering}
\label{SS:Product trick and cyclic covering}
We continue assuming that we are in the situation of Proposition \ref{P:stable reduction}.
We then have three line bundles $\cA^\bullet$ on $Y^\bullet$ respectively, with $\bullet= \emptyset, '$ or $\sharp$, satisfying 
\begin{equation}\label{E: A's relation}
    \psi^*\cA= \cA'=\eta^*\cA^\sharp
\end{equation}
as in \eqref{eq:3lbdle}

Recall that  $\cA^\sharp= (\det f^\sharp_{c*}\omega^m_{X_c^\sharp/Y^\sharp}(mD^{X^\sharp}))^{N_m}$, and $\cA^{\sharp}$ is a big line bundle. Hence, we can assume that $\cA^{\sharp N} (-D^{Y^\sharp})$ has a global section, for some positive integer $N$, where $D^{Y^\sharp}$ is the divisor as in \eqref{E:reddiagram log}. Since $f_c$ is stable, we see that $f^\sharp_{c*}\omega_{X^\sharp_c/Y^\sharp}^m(mD^{X^\sharp_c})$ is reflexive. 
Then, we have a natural inclusion 
$$\det f^\sharp_{c*}\omega_{X^\sharp_c/Y^\sharp}^m(mD^{X^\sharp_c}) \to (f^\sharp_{c*}\omega_{X^\sharp_c/Y^\sharp}^m(mD^{X^\sharp_c}))^{\otimes r_0},$$
where $r_0$ is the rank of $f^\sharp_{c*}\omega_{X^\sharp_c/Y^\sharp}^m(mD^{X^\sharp_c})$.
These give us a sequence of inclusions
$$(\cA^{\sharp}(D^{Y^\sharp}))^m\to \cA^{\sharp m(N+1)} \to (f^\sharp_{c*}\omega_{X^\sharp_c/Y^\sharp}^m(mD^{X^\sharp_c}))^{\otimes r},$$
where $r=m\cdot N_m(N+1).$
In particular, we have 
$$H^0(Y^\sharp, (f^\sharp_{c*}\omega_{X^\sharp_c/Y^\sharp}^m(mD^{X^\sharp_c}))^{\otimes r} \otimes (\cA^{\sharp}(D^{Y^\sharp}))^{-m})\neq 0.$$ 

We recall Viehweg's fiber product trick (see \cite{Vie83} and also \cite[Section 4]{WW19} for the log case). Suppose that we have a morphism $f:X\to Y$ with $X$ and $Y$ both smooth. We then define 
\[X^r_Y:=X\times_YX\times_Y\cdots\times_YX,\]
the $r$-th fiber product, and 
\[f^r:X^r_Y\to Y\]
the induced morphism. If $D^X$ is a $\Q$-Cartier divisor on $X$, we write
\[(D^{X})^r_Y=\sum_{i=1}^rp_i^*D^X,\]
where $p_i\colon X^r_Y\rarr X$ is the $i$-th projection; $X^r_Y$ and $(D^{X})^r_Y$ are also denoted by $X^r$ and $D^r$ respectively, if $f$ is obvious from the context. 

We first apply the fiber product trick on $f^\sharp_c$, and have 
$$(f^{\sharp}_c)^r:((X^{\sharp}_c)^r, (D^{X^{\sharp}_c})^r)\to Y^\sharp.$$ 
By \cite[Lemma 4.2]{WW19}, we have that $(f^{\sharp}_c)^r$ is also a stable family. Then by the flat base change, we have $$(f^\sharp_{c*}\omega_{X^\sharp_c/Y^\sharp}^m(mD^{X^\sharp_c}))^r \otimes (\cA^{\sharp}(D^{Y^\sharp}))^{-m}\simeq (f^\sharp_{c})^r_*\omega^m_{(X^{\sharp}_c)^r/Y^\sharp}(m(D^{X^{\sharp}_c})^r)\otimes (\cA^{\sharp}(D^{Y^\sharp}))^{-m}.$$
Therefore, 
$$H^0(Y^\sharp, (f^\sharp_{c})^r_*\omega^m_{(X^{\sharp}_c)^r/Y^\sharp}(m(D^{X^{\sharp}_c})^r)\otimes (\cA^{\sharp}(D^{Y^\sharp}))^{-m})\neq 0.$$ 
Note that, by the construction of (\ref{E:reddiagram log}), $D^{Y^\sharp}$ does not depend on $r$. 

Now we apply the fiber product trick on the log smooth morphism $f^\sharp$ in (\ref{E:reddiagram log}) and obtain 
\[(f^\sharp)^r: ((X^\sharp)^r, (D^{X^\sharp})^r)\to Y^\sharp.\]
We then replace $(X^\sharp)^r$ by a log resolution of the main component of the normalization of $(X^{\sharp})^r$ and $(f^\sharp)^r$ by the induced morphism. Note that these operations do not change $(f^\sharp)^r$ over $Y^\sharp\setminus D^{Y^\sharp}.$ Hence, we replace $(D^{X^\sharp})^r$ by the closure of $(D^{X^\sharp})^r|_{((f^\sharp)^r)^{-1}(Y^\sharp\setminus D^\sharp)}$ inside the new $(X^\sharp)^r$. We can assume that $(D^{X^\sharp})^r$ is normal crossing by taking further log resolutions. We then add  $((f^\sharp)^r)^{-1}(D^{Y^\sharp})$ to  $(D^{X^\sharp})^r$ and obtain a morphism of log smooth pairs
\[(f^\sharp)^r: ((X^\sharp)^r, (D^{X^\sharp})^r)\to (Y^\sharp,D^{Y^\sharp}).\]
By Lemma \ref{L:log smooth out of small}, $(f^\sharp)^r$ is log smooth after getting rid of a small closed subset of $Y^\sharp$. 

In the diagram \eqref{E:reddiagram log}, we replace $f^\sharp$ by the new induced morphism $(f^\sharp)^r:(X^\sharp, D^{X^\sharp})\to (Y, D^{Y^\sharp}),$ and replace $f^\sharp_c$ by $(f^{\sharp}_c)^r$. Then, by a local computation, we have that
$$f^\sharp_*\omega^m_{f^\sharp}\otimes \cA^{\sharp-m} \supset f^\sharp_{c*}\omega^m_{X^\sharp_c/Y^\sharp}(mD^{X^\sharp_c})\otimes (\cA^{\sharp}(D^{Y^\sharp}))^{-m}$$
In particular, $f^\sharp_*\omega^m_{f^\sharp}\otimes \cA^{\sharp-m}$
has non-trivial global sections. 

We similarly apply the fiber product trick on all vertical morphisms in (\ref{E:reddiagram log}), and then, replace all $(X^\bullet,D^{X^\bullet})$ by $(X^{\bullet\times r}_{Y^\bullet}, D^{X^\bullet\times r}_{Y^\bullet})$, as we did in the $\bullet= ^\sharp$ case. We end up with a new commutative diagram as (\ref{E:reddiagram log}), satisfying all properties in Proposition \ref{P:stable reduction log smooth}. Hence, we have that for any sufficient positive and divisible $m$,
$$
(f'_{0*}\psi_0^*(\omega^m_{f}\otimes f^*\cA^{-m}))
\simeq (f'_{1*}\omega^m_{f'_1}\otimes \cA'^{-m})
\simeq (f'_{2*}\omega^m_{f'_2}\otimes \cA'^{-m})
\simeq \eta^*(f^\sharp_*\omega^m_{f^\sharp}\otimes \cA^{\sharp-m}),
$$
and they all have non-trivial global sections.

Since $\psi_0$ is finite, we can find $l>0,$ such that $(\omega_f\otimes f^* \cA^{-1})^{ml}$, has non-trivial global sections, thanks to Lemma \ref{lm:decffinite}. From now on, we replace $ml$ by $m$, and denote $\cA'_1:=\cA'_2:=\cA'$ on $Y'$, hence we can find four sections
$$    \bar s^\bullet \in H^0(X^\bullet, (\omega_{f^\bullet}\otimes f^{\bullet*}(\cA^\bullet)^{-1})^{m}),
$$
with $\bullet= '_1, '_2, ^\sharp$ or $ ^\emptyset $, and satisfying that, on $f'^{-1}_1(Y'\setminus D^{Y'})$ and $f'^{-1}_2(Y'\setminus D^{Y'})$,
 $\bar\psi^*\bar s=\bar s'_1$ and $\bar s'_2 =\bar \eta^*\bar s^\sharp$, respectively.
 
Moreover, since all coefficients of $D^{X^\bullet}_h$ are in $(0,1)$, we can find four sections  
\begin{equation}\label{eq:4sections}
s^\bullet \in H^0(X^\bullet, (\omega_{\lceil f^\bullet\rceil}\otimes f^{\bullet*}(\cA^\bullet)^{-1})^{nm}),    
\end{equation}
for some integral $n>0$, with $\bullet= '_1, '_2, \sharp$ or $ \emptyset $, and satisfying that, on $f'^{-1}_1(Y'\setminus D^{Y'})$ and $f'^{-1}_2(Y'\setminus D^{Y'})$, $\bar\psi^* s= s'_1$ and $s'_2 = \bar\eta^* s^\sharp$, and the support of the divisor $(s^\bullet=0)$ contains the support of $D^{X^\bullet}_h$ respectively, by the definition of $\omega_{\lceil f^\bullet\rceil}$.

We now take the $nm$-th cyclic coverings given by $s^\bullet$. Then we have generic finite morphisms $\pi^\bullet:(Z^\bullet, D^{Z^\bullet}) \to (X^\bullet, D^{X^\bullet})$, with $\bullet=\emptyset, '_1, '_2, ^\sharp$, where $Z^\bullet$ is a log resolution of the main component of the normalization of the cyclic covering given by $s^\bullet$ and $D^{Z^\bullet}$ is given by $\pi^{\bullet -1}D^{X^\bullet}$.
They induce the following commutative diagram:
\begin{equation}\label{E:over Y'}
\begin{tikzcd}
    (Z, D^Z)\arrow[d, "g_0"] & (Z'_1, D^{Z'_1})\arrow[l]\arrow[dr, "g'_{01}"]\arrow[dl, phantom, "\square_1"]&(Z', D^{Z'})\arrow[l]\arrow[r]\arrow[d, "g'_0"]& (Z'_2, D^{Z'_2})\arrow[ld, "g'_{02}"]\arrow[r]\arrow[dr, phantom, "\square_2"] & (Z^\sharp, D^{Z^\sharp}) \arrow[d, "g^\sharp_0"] \\
    (Y, D^Y)& & (Y', D^{Y'})\arrow[ll, "\psi"]\arrow[rr, "\eta"] & &(Y^\sharp, D^{Y^\sharp}), 
\end{tikzcd}
\end{equation}
where $Z'$ is a common log resolution of $Z_1'$ and $Z_2'$, and $D^{Z'}$ is given by the union of the inverse image of $D^{Z'_1}$ and $D^{Z'_2}$ (so $D^{Z'}$ is a reduced divisor). We also have $\square_1$ and $\square_2$ are Cartesian over $Y \setminus D^Y$ and $Y^\sharp \setminus D^{Y^\sharp}$. Furthermore, we have sections 
$$\theta^\bullet\in H^0(\pi^{\bullet *}\omega_{\lceil f^\bullet\rceil}\otimes g_0^{\bullet*}(\cA^\bullet)^{-1}) $$ 
on $(Z^\bullet, D^{Z^\bullet})$, with $\bullet=\emptyset, '_1, '_2$ and $^\sharp$, such that their zero locus contains $D^{Z^\bullet}_h$.

%\subsection{Adding vertical boundary and summarizing}
\subsection{Extra boundaries and summary of the geometric construction}\label{SS:summarizing}
We make the final modification of the diagram \eqref{E:over Y'}, by adding more boundaries and summarize the geometric construction.
%We construct two commutative diagrams, which summarize the constructions in this section and they will be frequently referred in the next section when we construct the Viehweg-Zuo sheaf. 

We continue assuming that we are in the situation of the diagram \eqref{E:over Y'}. Suppose that $f_V:(U, D^U) \to V$ is the log smooth family from where Proposition \ref{P:stable reduction} starts. Then the morphism $f^r_V:(U^r, (D_U)^r)\to V$ remains log smooth. 
%To simplfy notation, we replace $f_V$ by $f_V^r$.  
Then by the construction of \eqref{E:over Y'}, we know that 
\[
\begin{tikzcd}
U^r\arrow[r,"\subseteq"]\arrow[d,"f^r_V"]& X\arrow[d,"f"]\\
V\arrow[r,"\subseteq"]& Y,
\end{tikzcd}
\]
where $f$ is the first vertical morphism (forgetting log structure) in \eqref{E:reddiagram log} in Proposition \ref{P:stable reduction log smooth} after taking the fiber product tricks as in \S \ref{SS:Product trick and cyclic covering}.
We now set $E^Y:=Y\setminus V$, $E^{X}_h$ the closure of $(D^U)^r$ in $X$ and $E^{X}_v:=f^{-1}E^Y$ and $E^{X}=E^{X}_h+E^{X}_h$. Remeber that $V$ is important, since we are interested in the log Kodaira dimension of it. We then obtain the following commutative diagram:
\begin{equation}\label{E:over Y log smooth}
\begin{tikzcd}
    & (Z, D^{Z})\arrow[d, "\pi"]\arrow[dd, bend left=60, "g_0"]\\
    (X, E^{X})\arrow[d, "f_0"]  & (X, D^X) \arrow[d, "f"]\arrow[l, "id"]\\
    (Y, E^Y) & (Y, D^Y), \arrow[l, "id"]&
\end{tikzcd}
\end{equation}
where $f_0$ is the same as $f$ as scheme-morphisms but with different log structures. 

%After using fiber product tricks for Proposition 
%Given a projective log smooth morphism $f_V:(U, D^U) \to V$ as in the question, we first construct the associated commutative diagram (\ref{E:reddiagram}), then replacing it by its self product as in \ref{SS:Product trick and cyclic covering}, and resolving the singular locus to get a similar commutative diagram as (\ref{E:reddiagram}). Take a log smooth compactification $f_0:(X, E^{X}) \to (Y, E^Y)$, with $E^Y:=Y\setminus V$ and $E^{X}_h$ given by the closure of $D^U$ and $E^{X}_v:=f_0^{-1}E^Y$. Then we replace it by a birational base-change that will fit into the construction in \ref{P:stable reduction log smooth}. After taking the cyclic cover, as constructed in \ref{SS:Product trick and cyclic covering}, we get the following commutative diagram.
%\begin{equation}\label{E:over Y log smooth}
%\begin{tikzcd}
%    & (Z, D^{Z})\arrow[d, "\pi"]\arrow[dd, bend left=60, "g_0"]\\
%    (X, E^{X})\arrow[d, "f_0"]  & (X, D^X) \arrow[d, "f"]\arrow[l, "id"]\\
%    (Y, E^Y) & (Y, D^Y) \arrow[l, "id"]&
%\end{tikzcd}
%\end{equation}
%Note that, we use $f_0$ to keep track the initial log smooth locus, since taking self-product will keep the initial log smooth locus. In particular, we will keep $E^Y$ to remember the initial non-log smooth locus, and may extent $D^Y$, if it is needed. We use $g_0$ to connect this commutative diagram to the next commutative diagram.

We now make the final modification of the diagram \eqref{E:over Y'} as follows. We add extra boundary divisors on $Y^\bullet$ so that all the down morphisms are log smooth over $Y^\bullet \setminus D^{Y^\bullet}$, and both $\psi$ and $\eta$ are morphisms of pairs, with $\psi$ being finite with ramification locus contained in $D^Y$, and $\eta$ being smooth. Then we add all vertical divisors onto those $D^{Z^\bullet}$, and make further resolutions if needed, to make all down arrows are morphisms of log smooth pairs, but keeping $\square_1$ and $\square_2$ are still Cartesian over $(Y\setminus D^Y)$ and $Y^\sharp\setminus D^{Y^\sharp}$. We then use $g^\bullet$ to denote the new down morphisms respectively. After ignoring small closed subsets on those $Y^\bullet$ respectively, we can have that all of those $g^\bullet$ are log smooth, and $D^{Y^\bullet}$ are smooth divisors on $Y^\bullet$. In summary, we end up with the following commutative diagram:

\begin{equation}\label{E:over Y' log smooth}
\begin{tikzcd}
    (Z, D^Z_v)\arrow[d, "g"]& (Z'_1, D^{Z'^1}_v)\arrow[l]\arrow[dr, "g'_1"]\arrow[dl, phantom, "\square_1"]&(Z', D^{Z'}_v)\arrow[l]\arrow[r]\arrow[d, "g'"] & (Z'_2, D^{Z'_2}_v)\arrow[ld, swap, "g'_2"]\arrow[r]\arrow[dr, phantom, "\square_2"] & (Z^\sharp, D^{Z^\sharp}_v) \arrow[d, "g^\sharp"]\\
    (Y, D^Y)& & (Y', D^{Y'})\arrow[ll, "\psi"]\arrow[rr, "\eta"] & &(Y^\sharp, D^{Y^\sharp}). 
\end{tikzcd}
\end{equation}

\section{Construction of the refined Viehweg-Zuo sheaf}\label{S:refined VZ}
\subsection{Construction of the Viehweg-Zuo sheaf}
%We assume that we are in the situation of the diagram \eqref{E:over Y' log smooth}.
%Fix $f_0:(X,E^X)\to (Y,E^Y)$ that is smooth over $Y\setminus E^Y$. 
The goal of this section is the construction of the so-called Viehweg-Zuo sheaf in our setting, as indicated in the following theorem. 
%The goal of this section is to show the following theorem, which is also known as the existence of the Viehweg-Zuo sheaf. 
%Note that it is a weaker version of Theorem \ref{T:refined VZ}.

We assume that we are in the situation of the diagram \eqref{E:over Y' log smooth} constructed in the previous section.
\begin{theorem}\label{T:VZ}
There exists an invertible subsheaf $\cA\subset \Omega^{\otimes n}_Y(\log E^Y)$, for some positive integer $n$, with $\kappa(\cA)\geq \Var(f_V)$.
\end{theorem}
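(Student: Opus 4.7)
The plan is to mimic the Viehweg--Zuo construction via Saito's theory of Hodge modules on the base $Y$, using the section $\theta\in H^0(Z,\ \pi^*\omega_{\lceil f\rceil}\otimes g^*\cA^{-1})$ produced in \S \ref{SS:Product trick and cyclic covering} (with $g:(Z,D^Z_v)\to (Y,D^Y)$ the log smooth projection from \eqref{E:over Y' log smooth}), and then bootstrapping from the analogous construction on $Y^\sharp$, where bigness of $\cA^\sharp$ is available, through the stable-reduction diagram \eqref{E:over Y' log smooth}.

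First I would apply the section-to-Higgs-morphism construction of \S \ref{SS:Sub higgs sheaves from a section} to $\theta$ with $\cL=\omega_{\lceil f\rceil}$, obtaining a morphism of graded $\sA^\bullet_{(Y,D^Y)}$-modules
\[\eta_\theta:\cM_{f,\omega_{\lceil f\rceil}}\longrightarrow \cM_{g, g^*\cA}\]
whose lowest-graded piece is the injection $\cO_Y\hookrightarrow g_{*}\omega_{g}\otimes\cA^{-1}$, equivalently $\cA\hookrightarrow g_{*}\omega_{g}=\cM_{g,0}$. Composing with the iterated Higgs fields of the graded module $\cM_{g,\bullet}$ (after untwisting by $\cA$) then gives maps
\[\tau^k:\cA\longrightarrow \cM_{g,k}\otimes \Sym^k\Omega_Y(\log D^Y),\qquad k\ge 0,\]
with $\tau^0$ the initial inclusion from $\theta$.

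Second, since $\cM_{g,\bullet}$ is supported in finitely many graded degrees, I let $n$ be the largest integer with $\tau^n\neq 0$; then the image of $\tau^n$ lies inside $\cK_n\otimes\Sym^n\Omega_Y(\log D^Y)$, where $\cK_n\subseteq \cM_{g,n}$ is the Higgs kernel. To rule out the possibility that $n=0$ (in which case the image would miss $\Omega_Y(\log D^Y)$ altogether), I pull back via $\psi$ (Proposition \ref{P:morphism due to base change}) and $\eta$ (Proposition \ref{P:id from smooth base change}) to obtain the analogous $\tau_\sharp^k$ on $Y^\sharp$. If every $\tau_\sharp^k$ for $k\ge 1$ vanished, the image of $\tau_\sharp^0$ would lie in $\cK_0^\sharp$, i.e.\ $\cA^\sharp\hookrightarrow \cK_0^\sharp$; but $(\cK_0^\sharp)^\vee$ is weakly positive by Theorem \ref{P:qe for -N-L} (applied to the strict log-smooth $g^\sharp$ coming from the stable reduction), and no big line bundle (such as $\cA^\sharp$) can embed into a sheaf whose dual is weakly positive. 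Hence some $\tau_\sharp^k$ with $k\ge 1$ is non-zero, and by the compatibility of Higgs data under base change provided by Propositions \ref{P:id from smooth base change}--\ref{P:morphism due to base change} and Theorem \ref{T:common HM}, non-vanishing lifts back to $Y$.

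Third, from the non-zero map $\tau^n:\cA\to \cK_n\otimes\Sym^n\Omega_Y(\log D^Y)$ with $\cK_n^\vee$ weakly positive (again transported from $Y^\sharp$), I extract an invertible subsheaf of a tensor power of $\Omega_Y(\log D^Y)$ by the standard Viehweg--Zuo trick: tensoring by $\det\cK_n^\vee$ and taking suitable symmetric/tensor powers, and using that the determinant of a weakly positive sheaf is pseudo-effective, one produces a line bundle $\cA_0\hookrightarrow \Omega_Y^{\otimes N}(\log D^Y)$ for some $N>0$ with $\kappa(\cA_0)\ge \kappa(\cA)=\Var(f_V)$. The equality $\kappa(\cA)=\Var(f_V)$ follows from $\psi^*\cA\simeq \eta^*\cA^\sharp$ and bigness of $\cA^\sharp$, since $\psi$ is finite and $\eta$ is surjective and smooth. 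The two anticipated difficulties are (a) transferring the weak-positivity input from $Y^\sharp$ to $Y$ (mediated by Theorem \ref{T:common HM} and the base-change propositions), and (b) reducing the log pole of the final invertible subsheaf from $\log D^Y$ down to $\log E^Y$. The latter step should follow because the components of $D^Y\setminus E^Y$ were added only to force log smoothness of $\psi$, $\eta$ and the $g^\bullet$, so neither $\theta$ nor its iterated Higgs images genuinely acquire poles along them, and the extracted $\cA_0$ in fact sits inside $\Omega_Y^{\otimes N}(\log E^Y)$ as required.
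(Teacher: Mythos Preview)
Your overall architecture is correct and close to the paper's, but the third step---extracting the invertible subsheaf from $\tau^n:\cA\to\cK_n\otimes\Sym^n\Omega_Y(\log D^Y)$---has a genuine gap. Knowing only that $\cK_n^\vee$ is weakly positive (equivalently, that $\det\cK_n^\vee$ is pseudo-effective) is not enough: dualizing $\tau^n$ gives $\cK_n^\vee\to\cA^{-1}\otimes\Sym^n\Omega_Y(\log D^Y)$, and the image is a weakly positive rank-one quotient $L$, so you obtain $\cA\otimes L\hookrightarrow\Sym^n\Omega_Y(\log D^Y)$; but pseudo-effectivity of $L$ does \emph{not} guarantee $\kappa(\cA\otimes L)\ge\kappa(\cA)$, since $L$ may well have $\kappa(L)=-\infty$. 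Your ``determinant trick'' does not get around this. The paper avoids the problem by proving a strictly stronger positivity statement, Proposition~\ref{P:gg for -N-A}: for every $m$ there is $N_m$ with $\psi^*\cA^{-m}\otimes\Sym^{N_m}\cC_{g'}^\vee$ generically globally generated. This is exactly what is needed to compose with $\Sym^{N_m}$ of the dual map and land on $\psi^*\cA\hookrightarrow\psi^*\Sym^\bullet\Omega_Y(\log E^Y)$; it is obtained by pulling back \emph{bigness} of $\cK_{g^\sharp}^\vee\otimes\cA^\sharp$ from $Y^\sharp$ via $\eta$, not merely weak positivity. Note also that $\cK_{g}^\vee$ is already weakly positive on $Y$ by Theorem~\ref{P:qe for -N-L} (since $g$ is strict), so nothing is ``transported'' at the level of weak positivity; what genuinely requires transport through $Y'$ and $Y^\sharp$ is the twist by $\cA$.

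Relatedly, your step~2 and the base-change bookkeeping are underspecified. One cannot pass directly between $Y$ and $Y^\sharp$; both comparisons live on $Y'$, where $\psi^*\cM_g\hookrightarrow\cM_{g'_1}$ (Proposition~\ref{P:morphism due to base change}) and $\eta^*\cM_{g^\sharp}\simeq\cM_{g'_2}$ (Proposition~\ref{P:id from smooth base change}) are linked only through the common Hodge-module summand $\hat\cM$ of Proposition~\ref{P:bir invar of M}. This is why the paper runs the entire argument on $Y'$ (Lemma~\ref{lem:seqmAm}) and only descends to $Y$ at the very end via finiteness of $\psi$ and Lemma~\ref{lm:decffinite}. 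Finally, your pole-reduction intuition is right but should be made precise: it is the identification $\cM_{f,\cL}\simeq\cM_{f_0,\cL}$ of \eqref{E:id for extra boundary} that equips the source with an $\sA^\bullet_{(Y,E^Y)}$-module structure, and this is what forces the Higgs field on the image $\cN_{g'}$ to factor through $\psi^*\Omega_Y(\log E^Y)$ rather than $\psi^*\Omega_Y(\log D^Y)$, as in \eqref{E:correct log pole}.
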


To prove the above theorem, we also use the terminology and notations established in \S \ref{subsec:3.1} and \S \ref{SS:Direct image of graded log D-modules}. We apply Proposition \ref{P:bir invar of M} for $g_1'$ and $g'$ in the diagram \eqref{E:over Y' log smooth}, and obtain graded $\sA^\bullet_{(Y',D^{Y'})}$-modules $\hat\cM$ and $\cM_{g_1'}$.
%Following the construction in Section \ref{SS:summarizing}, we achieve the commutative diagram (\ref{E:over Y' log smooth}).
We then prove the following lemma. 
%where we also use the terminology and notations established in \S \ref{subsec:3.1} and \S \ref{SS:Direct image of graded log D-modules}.
\begin{lemma}\label{lem:seqmAm}
We have the following sequence of morphisms, as graded $\sA^\bullet_{(Y',D^{Y'})}$-modules:
\begin{equation}\label{E:seq of mor}
    \psi^*\cM_{f_0,\cL} \xrightarrow{\simeq}
\psi^*\cM_{f,\cL} \xrightarrow{}
\psi^*\cM_{g,g^*\cA} \xrightarrow{}\cM_{g'_1,g'^{*}_1\cA'}  \xrightarrow{}
 \hat{\cM}_{\cA'},
\end{equation}
away from a small closed subset of $Y$, where $\mathcal{L}=\omega_{\lceil f\rceil}=\omega_{\lceil f_0\rceil},$ $\cA^\bullet$ are defined in \eqref{eq:3lbdle}, and $\hat{\cM}_{\cA'}:= \hat\cM\otimes_{\cO_{Y'}} \cA'^{-1}$. Moreover, the lowest graded pieces of those morphisms are inclusions. In particular, the composition of those morphisms is not trivial. 
\end{lemma}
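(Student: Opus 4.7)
The plan is to construct each of the four morphisms in \eqref{E:seq of mor} separately by applying the appropriate tool from Sections \ref{S:twisted} and \ref{S:Geo const}, then to verify injectivity on the lowest graded piece at each step, which forces non-triviality of the composition. Throughout I restrict to a big open subset of the base, which suffices in view of Remark \ref{R:codim 2}.

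For the first arrow, $f_0$ and $f$ share the same underlying morphism of schemes and the same horizontal boundary; they differ only by extra vertical components ($D^Y$ versus $E^Y$, and correspondingly $D^X$ versus $E^X$). The identity \eqref{E:id for extra boundary} in \S \ref{subsect:3.4} then supplies a canonical isomorphism $\cM_{f_0,\cL}\simeq \cM_{f,\cL}$ over a big open subset of $Y$, and pulling back by $\psi$ yields the first arrow, which is an isomorphism on every graded piece.

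For the second arrow I will invoke the section mechanism of \S \ref{SS:Sub higgs sheaves from a section}, applied to $\pi:(Z,D^Z)\to (X,D^X)$ and the section $\theta\in H^0(Z,\pi^*\omega_{\lceil f\rceil}\otimes g_0^*\cA^{-1})$ constructed in \S \ref{SS:Product trick and cyclic covering}: by construction, the support of $(\theta)$ contains $D^Z_h$, so both (Assumption 1) and (Assumption 2) there hold. This produces a morphism $\eta_\theta:\cM_{f,\cL}\to \cM_{g,g^*\cA}$ of graded $\sA^\bullet_{(Y,D^Y)}$-modules whose lowest graded piece is injective, as noted at the end of \S \ref{SS:Sub higgs sheaves from a section}; pulling back by $\psi$ gives the second arrow, still injective in the bottom grade. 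The third arrow will come from Proposition \ref{P:morphism due to base change} applied to the finite cover $\psi:Y'\to Y$ (branched along $D^Y$) and the strict log smooth morphism $g:(Z,D^Z_v)\to (Y,D^Y)$; this yields a natural inclusion $\psi^*\cM_g\hookrightarrow \cM_{g'_1}$ of graded $\sA^\bullet_{(Y',D^{Y'})}$-modules, which I tensor with $\cA'^{-1}$, using $\psi^*\cA=\cA'$ from \eqref{eq:3lbdle}. For the fourth arrow I invoke Proposition \ref{P:bir invar of M} applied to the birational pair $g'_1$ and $g'$ in \eqref{E:over Y' log smooth}: the resulting Hodge-theoretic summand $\hat\cM$ is a direct summand of $\cM_{g'_1}$, and the projection $\cM_{g'_1}\twoheadrightarrow \hat\cM$ tensored with $\cA'^{-1}$ yields the arrow; since $\hat\cM$ and $\cM_{g'_1}$ share the same lowest graded piece, this projection is an isomorphism in grade zero.

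To conclude non-triviality, the lowest graded piece of $\psi^*\cM_{f_0,\cL}$ equals $\psi^*(f_0)_*(\omega_{\lceil f_0\rceil}\otimes \cL^{-1})=\psi^*\cO_Y=\cO_{Y'}$, which is non-zero; since all four induced maps on the bottom grade are injections, the composition is automatically non-trivial. I expect the main bookkeeping obstacle to be verifying that the strictness and smooth-boundary hypotheses required by Propositions \ref{P:morphism due to base change} and \ref{P:bir invar of M} survive after all the modifications of \S \ref{SS:summarizing}; this reduces to restricting to a sufficiently big open subset of each $Y^\bullet$, and is the only delicate point in the argument.
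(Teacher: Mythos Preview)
Your proposal is correct and follows essentially the same approach as the paper's proof: the four arrows are obtained, in order, from \eqref{E:id for extra boundary}, the section mechanism of \S\ref{SS:Sub higgs sheaves from a section} (with $\theta$ as you describe), Proposition \ref{P:morphism due to base change} tensored with $\cA'^{-1}$ via $\psi^*\cA=\cA'$, and the direct-summand projection from Proposition \ref{P:bir invar of M}; the injectivity on lowest graded pieces is verified in the same way. Your closing remark about the bookkeeping for strictness and smooth boundaries is exactly the right caveat and is handled in the paper by the modifications of \S\ref{SS:summarizing} together with Remark \ref{R:codim 2}.
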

\begin{proof}
We construct \eqref{E:seq of mor} one by one from the left to the right. By \eqref{E:id for extra boundary}, we obtain the first isomorphism. To obtain the second morphism, we need the construction in \S \ref{SS:Sub higgs sheaves from a section}. More precisely, by \eqref{eq:3lbdle} and the construction of $Z$ in the \S \ref{SS:Product trick and cyclic covering}, we see that (Assumption. 1) and (Assumption. 2) in \S \ref{SS:Sub higgs sheaves from a section} are fulfilled. Taking $\theta=s$ there in \S \ref{SS:Sub higgs sheaves from a section}, we see that the second morphism is induced by $\eta_s$. We apply projection formula and see that 
\[\cM_{g,g^*\cA}=\cM_{g}\otimes_\sO \cA \textup{ and } \cM_{g_1',g_1'^*\cA'}=\cM_{g_1'}\otimes_\sO \cA'.\]
We then obtain the third morphism by Proposition \ref{P:morphism due to base change}. By Proposition \ref{P:bir invar of M}, we know that $\hat\cM$ is a direct summand of $\cM_{g_1'}$. Hence, the last morphism is defined to be the natural projection 
\[\cM_{g'_1,g'^{*}_1\cA'}  \xrightarrow{}
 \hat\cM_{\cA'}.\]
The second morphism on the lowest graded pieces is injective by construction. The third morphism is injective. The last morphism on the lowest graded pieces is identical thanks to Proposition \ref{P:bir invar of M} again. Therefore, the lowest graded pieces of those morphisms are inclusions.
\end{proof}

%By using the diagram (\ref{E:over Y' log smooth}),
%we start to build the following sequence of morphisms, as naturally graded $\sA^\bullet_{(Y',D^{Y'})}$-modules:
%\begin{equation}\label{E:seq of mor}
%    \psi^*\cM_{f_0,\cL} \xrightarrow{\simeq}
%\psi^*\cM_{f,\cL} \xrightarrow{}
%\psi^*\cM_{g,g^*\cA} \xrightarrow{}\cM_{g'_1,g'^{*}_1\cA'}  \xrightarrow{}
% \hat{\cM}_{g',g'^{*}\cA'} ,
%\end{equation}
%where $\mathcal{L}=\omega_{\lceil f\rceil}=\omega_{\lceil f_0\rceil},$ and $\cA^\bullet$ are from \ref{SS:Ass to max}. We also show that the lowest graded pieces of those morphisms are inclusions. In particular, the composition of those morphisms is not trivial. 

%We work from the left to the right. The first identity is due to (\ref{E:id for extra boundary}). 
%The second one is due to the construction in \ref{SS:Sub higgs sheaves from a section}. More precisely, by the construction of $Z$, we have 
%$$H^0(Z, \pi^*\cL\otimes g^*\cA^{-1})\neq 0,$$
%and we have a section $\theta\in H^0(Z, \pi^*\omega_{\lceil f\rceil}) \otimes g^*\cA^{-1})$,
%with the support of $(\theta)$ contains $D^Z_h$. 
%For the third one, it is due to Proposition \ref{P:morphism due to base change}. The last morphism is the projection to a direct summand  
%$$\cM_{g'_1,g'^{*}_1\cA'} \to \hat\cM\otimes \cA'^{-1},$$
%which is constructed in Proposition \ref{P:bir invar of M}, twisted by $\cA'^{-1}$, and we set $\hat{\cM}_{g',g'^{*}\cA'}:= \hat\cM\otimes \cA'^{-1}$. 

Denote by $\cN_{g'}$, the image of $\psi^*\cM_{f_0,\cL}$ in $\hat{\cM}_{\cA'}$, under the morphisms in \eqref{E:seq of mor}. Note that it is naturally a sub-$\sA^\bullet_{(Y', D^{Y'})}$-module of $\hat{\cM}_{\cA'}$. However, since $\cM_{f_0,\cL}$ is a $\sA^\bullet_{(Y, E^Y)}$-module, we have the following commutative diagram,
\begin{equation*}
    \begin{tikzcd}
    \psi^*\cM_{f_0,\cL}\arrow[r]\arrow[d]& \psi^*\cM_{f_0,\cL}\otimes \psi^*\Omega_Y(\log E^Y)\arrow[d]\\
    \hat{\cM}_{\cA'} \arrow[r] & \hat{\cM}_{\cA'}\otimes  \Omega_{Y'}(\log D^{Y'}).
    \end{tikzcd}
\end{equation*}
Consequently, the Higgs-type morphism $\cN_{g'}\to \cN_{g'} \otimes \Omega_{Y'}(\log D^{Y'})$ factorizes through 
\begin{equation}\label{E:correct log pole}
    \delta: \cN_{g'}\to \cN_{g'} \otimes \psi^*\Omega_Y(\log E^Y).
\end{equation}
We denote by $\cC_{g'}$ the kernel of $\delta$ and denoted by 
\[\delta^k: \cN_{g'}\to \cN_{g'} \otimes \psi^*\Sym^k\Omega_Y(\log E^Y),\]
the $k$-th iterated morphism of $\delta$.

\begin{prop}\label{P:gg for -N-A}
For any positive integer $m$, we can find another positive integer $N_m$, such that
$\psi^*\cA^{-m}\otimes \Sym^{N_m}\cC_{g'}^\vee$ is generically globally generated.
\end{prop}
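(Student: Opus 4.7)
The strategy is to propagate weak positivity of Higgs kernels from $Y^\sharp$, where the moduli-type line bundle $\cA^\sharp$ is big, down to $Y'$ via the smooth fibration $\eta$, and then transfer this positivity onto $\cC_{g'}^\vee$ through a quotient map. The main inputs are Proposition \ref{P:bir invar of M}, Proposition \ref{P:id from smooth base change}, Theorem \ref{P:qe for -N-L}, and the bigness of $\cA^\sharp$ from \cite[Theorem 7.1]{KP16}.

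First I would identify where $\cC_{g'}$ lives inside $\hat{\cM}_{\cA'}$. Since $\cN_{g'}$ is a sub-$\sA^\bullet_{(Y',D^{Y'})}$-module of $\hat{\cM}_{\cA'}=\hat{\cM}\otimes (\cA')^{-1}$, the natural Higgs morphism of $\hat{\cM}$ restricts to a morphism $\cN_{g'}\to \cN_{g'}\otimes \Omega_{Y'}(\log D^{Y'})$, and $\delta$ from \eqref{E:correct log pole} is this restricted Higgs composed with the inclusion $\psi^*\Omega_Y(\log E^Y)\hookrightarrow \Omega_{Y'}(\log D^{Y'})$. Hence $\cC_0:=\cC_{g'}\otimes \cA'$ embeds into the Higgs kernel $\cK_{\hat{\cM},\bullet}$ of $\hat{\cM}$. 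By Proposition \ref{P:bir invar of M}, $\hat{\cM}$ is a direct summand of $\cM_{g'_2}$ as a graded $\sA^\bullet_{(Y',D^{Y'})}$-module, so $\cK_{\hat{\cM},\bullet}$ is a direct summand of $\cK_{g'_2,\bullet}$; by Proposition \ref{P:id from smooth base change} applied to the Cartesian square $\square_2$, we have $\cM_{g'_2}\simeq \eta^*\cM_{g^\sharp}$, and because the Higgs field of a VHS pulled back along the smooth morphism $\eta$ has trivial relative part, $\cK_{g'_2,\bullet}=\eta^*\cK_{g^\sharp,\bullet}$. Thus $\cC_0\hookrightarrow \eta^*\cK_{g^\sharp,\bullet}$.

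Over a big open subset of $Y'$ on which all relevant sheaves are locally free, this inclusion dualizes to a surjection $\eta^*\cK_{g^\sharp,\bullet}^\vee\twoheadrightarrow \cC_0^\vee$. Taking $\Sym^{N_m}$ and twisting by $(\cA')^{N_m-m}=\eta^*(\cA^\sharp)^{N_m-m}$, and using $\Sym^{N_m}\cC_{g'}^\vee=\Sym^{N_m}\cC_0^\vee\otimes (\cA')^{N_m}$ together with $\psi^*\cA=\cA'$, we obtain a surjection
\[\eta^*\bigl(\Sym^{N_m}\cK_{g^\sharp,\bullet}^\vee\otimes (\cA^\sharp)^{N_m-m}\bigr)\twoheadrightarrow \psi^*\cA^{-m}\otimes \Sym^{N_m}\cC_{g'}^\vee.\]
Since pullback by the dominant $\eta$ and sheaf quotients preserve generic global generation, it suffices to show that $\Sym^{N_m}\cK_{g^\sharp,\bullet}^\vee\otimes (\cA^\sharp)^{N_m-m}$ is generically globally generated on $Y^\sharp$ for $N_m\gg m$. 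The morphism $g^\sharp$ is log smooth and strict by the construction in \S\ref{SS:summarizing}, so Theorem \ref{P:qe for -N-L} gives that $\cK_{g^\sharp,\bullet}^\vee$ is weakly positive on $Y^\sharp$; and $\cA^\sharp$ is big on $Y^\sharp$ by \cite[Theorem 7.1]{KP16}. The standard combination of weak positivity with a big twist---$\Sym^N\cF\otimes \cL^{N-m}$ being generically globally generated for $N\gg m$ whenever $\cF$ is weakly positive and $\cL$ is big---then concludes the proof.

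The main technical point is ensuring that the Higgs structures are preserved both under the pullback identification $\cM_{g'_2}\simeq \eta^*\cM_{g^\sharp}$ and under the direct-summand decomposition of $\hat{\cM}$ inside $\cM_{g'_2}$, so that the corresponding statements for Higgs kernels hold at each graded piece. Both reduce to the smoothness of $\eta$ together with the functoriality of Saito's decomposition theorem for pure Hodge modules used to define $\hat{\cM}$ in Proposition \ref{P:bir invar of M}.
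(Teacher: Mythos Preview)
Your proposal is correct and follows essentially the same route as the paper: weak positivity of $\cK_{g^\sharp}^\vee$ from Theorem \ref{P:qe for -N-L} plus bigness of $\cA^\sharp$ gives generic global generation of $\Sym^{N_m}\cK_{g^\sharp}^\vee\otimes(\cA^\sharp)^{N_m-m}$, this is pulled back along the smooth $\eta$ using Proposition \ref{P:id from smooth base change} to get the analogous statement for $\cK_{g'_2}$, and then the inclusion $\cC_{g'}\hookrightarrow\cK_{g'_2,g_2'^*\cA'}$ coming from Proposition \ref{P:bir invar of M} yields the generically surjective dual map onto $\Sym^{N_m}\cC_{g'}^\vee$. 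The only cosmetic slip is your description of $\delta$ (the restricted Higgs field is $\delta$ followed by the inclusion $\psi^*\Omega_Y(\log E^Y)\hookrightarrow\Omega_{Y'}(\log D^{Y'})$, not the other way around), but since that inclusion is injective your conclusion $\cC_0\hookrightarrow\cK_{\hat\cM}$ is unaffected.
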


\begin{proof}
We first focus on the morphism $f^\sharp$ in \eqref{E:reddiagram log} in Proposition \ref{P:stable reduction log smooth} after taking the fiber product tricks as in \S \ref{SS:Product trick and cyclic covering}. We apply Theorem \ref{P:qe for -N-L} and get that $\mathcal{K}^\vee_{g^\sharp}$ is weakly positive. Since $\cA^\sharp$ is big, we hence have that $$\mathcal{K}^\vee_{g^\sharp,g^{\sharp*}\cA^\sharp}\simeq \mathcal{K}^\vee_{g^\sharp}\otimes \cA^\sharp$$ 
is also big. By the definition of bigness (see for instance \cite[Definition 4.7(2)]{KP16}), we then have that for any integer $m$, there exists an integer $N_m$, such that
$$\cA^{\sharp-m}\otimes \Sym^{N_m}\cK^\vee_{g^\sharp,g^{\sharp*}\cA^\sharp}$$ 
is generically globally generated. Since $\eta$ is smooth, we have $\eta^*\cM_{g^\sharp}\simeq \cM_{g'_2}$ and hence  $\eta^* \cK_{g^\sharp}\simeq \cK_{g'_2}.$ Since $\eta^*\cA^{\sharp}\simeq \cA'\simeq \psi^*\cA$ (see \eqref{eq:3lbdle}), 
we hence have an isomorphism
$$\eta^*(\cA^{\sharp-m}\otimes \Sym^{N_m}\cK^\vee_{g^\sharp,g^{\sharp*}\cA^\sharp})\to \psi^*\cA^{-m}\otimes \Sym^{N_m}\cK^\vee_{g'_2,g^{'*}_2\cA'}.$$
Therefore, $\psi^*\cA^{-m}\otimes \Sym^{N_m}\cK^\vee_{g'_2,g^{'*}_2\cA'}$ is also generically globally generated. 

By applying Proposition \ref{P:bir invar of M} for $g'$ and $g'_2$, we see that $\hat{\cM}_{\cA'}$ is a direct summand of $\cM_{g'_2,g'^{2*}\cA'}$. We have a natural inclusion $\cC_{g'}\to \cK_{g'_2,g^{*}_2\cA'}$, which induces a morphism 
$$\psi^*\cA^{-m}\otimes \Sym^{N_m}\cK^\vee_{g'_2,g'^{*}_2\cA'}\to \psi^*\cA^{-m}\otimes \Sym^{N_m}\cC_{g'}^\vee,$$
which is generically surjective. Hence we can conclude the proof.

%We first focus on the $f^\sharp$ part, which is of maximal variation. Apply Proposition \ref{P:qe for -N-L} to get that $\mathcal{K}^\vee_{g^\sharp}$ is weakly positive. Since $\cA^\sharp$ is big, we have $\mathcal{K}^\vee_{g^\sharp,g^{\sharp*}\cA^\sharp}$ is also big. This implies that for any integer $m$, there exists an integer $N_m$, such that
%$\cA^{\sharp-m}\otimes \Sym^{N_m}\cK^\vee_{g^\sharp,g^{\sharp*}\cA^\sharp}$ is generically globally generated. Since $\eta$ is smooth, we have $\eta^*\cM_{g^\sharp}\simeq \cM_{g'_2}$, hence  $\eta^* \cK_{g^\sharp}\simeq \cK_{g'_2},$ we have $\eta^*\cA^{\sharp}\simeq \cA'\simeq \psi^*\cA,$ 
%hence they induce an isomorphism
%$$\eta^*(\cA^{\sharp-m}\otimes \Sym^{N_m}\cK^\vee_{g^\sharp,g^{\sharp*}\cA^\sharp})\to \psi^*\cA^{-m}\otimes \Sym^{N_m}\cK^\vee_{g'_2,g^{'*}_2\cA'},$$
%which is also generically globally generated. 

%Recall that $\hat{\cM}_{g',g'^{*}\cA'}$ is a direct summand of $\cM_{g'_2,g'^{2*}\cA'}$. We have a natural inclusion $\cC_{g'}\to \cK_{g'_2,g^{*}_2\cA'}$, which induces a morphism 
%$$\psi^*\cA^{-m}\otimes \Sym^{N_m}\cK^\vee_{g'_2,g'^{*}_2\cA'}\to \psi^*\cA^{-m}\otimes \Sym^{N_m}\cC_{g'}^\vee,$$
%which is generically surjective. Hence we can conclude the proof.
\end{proof}

The following techinical lemma is needed in the proof of Theorem \ref{T:VZ}. It is well-know by experts, but we still give a proof here.
\begin{lemma}\label{lm:decffinite}
Given a quasi-coherent sheaf $\cF$ on a variety $X$, and a finite covering $f:X'\to X$, assuming that $f^*\cF$ has a non-trivial global section, then there exists a positive integer $n$, such that $H^0(X, \cF^{\otimes n})\neq 0.$ 
\end{lemma}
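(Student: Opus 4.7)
The plan is to employ the classical Galois closure plus norm construction. First, I would pass to a Galois closure $\pi: X'' \to X$ of the finite cover $f: X' \to X$, so that $\pi$ factors as $X'' \xrightarrow{g} X' \xrightarrow{f} X$ and $\pi$ is Galois with group $G$. Pulling back the given nonzero section $s \in H^0(X', f^*\cF)$ along $g$ yields $\tilde{s} := g^*s \in H^0(X'', \pi^*\cF)$; since $g$ is finite and faithfully flat (passing to reasonable models if necessary), $\tilde{s} \neq 0$.

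Next, I would form the norm-type element
$$N(\tilde{s}) := \bigotimes_{\sigma \in G} \sigma^*\tilde{s} \in H^0\bigl(X'', \pi^*\cF^{\otimes |G|}\bigr),$$
which is tautologically $G$-invariant. For the application in this paper, $\cF$ is a line bundle (hence torsion-free and flat), so $H^0(X'', \pi^*\cF^{\otimes |G|})$ injects into the stalk at the generic point of $X''$, and each $\sigma^*\tilde{s}$ is a nonzero element of the one-dimensional vector space $(\pi^*\cF)_\eta$ over $K(X'')$. The product of such elements is nonzero, hence $N(\tilde{s}) \neq 0$.

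Finally, I would descend $N(\tilde{s})$ to $X$. Using projection formula, $\pi_*\pi^*\cF^{\otimes |G|} \simeq \cF^{\otimes |G|} \otimes_{\mathcal{O}_X} \pi_*\mathcal{O}_{X''}$. The identity $(\pi_*\mathcal{O}_{X''})^G = \mathcal{O}_X$ is valid in characteristic zero, and since $\cF^{\otimes |G|}$ is flat it commutes with taking $G$-invariants, so $(\pi_*\pi^*\cF^{\otimes |G|})^G = \cF^{\otimes |G|}$. Therefore $N(\tilde{s})$ descends to a nonzero element of $H^0(X, \cF^{\otimes |G|})$, and we may take $n = |G|$.

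The main technical obstacle is verifying nonvanishing of $N(\tilde{s})$. In the paper's application, where $\cF$ is a line bundle, this is immediate from the generic stalk argument above; more generally one could first replace $\cF$ by a torsion-free quotient through which $s$ still factors, reducing the general quasi-coherent case to the torsion-free one where the generic-point argument applies.
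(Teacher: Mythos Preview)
Your approach is essentially identical to the paper's: pass to the Galois closure, tensor the pulled-back section over the Galois group to obtain a $G$-invariant section, and descend. The paper's proof is a terse three-line sketch that omits the nonvanishing and descent verifications you spell out, so your version is strictly more complete. One small correction: in the paper the lemma is also applied with $\cF = \cA^{-1}\otimes \Sym^{mk}\Omega_Y(\log E^Y)$, which is a vector bundle rather than a line bundle; your generic-point argument still works there (a tensor product of nonzero vectors in a vector space is nonzero), so nothing is lost.
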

\begin{proof}
Let $\bar f:\bar X\to X$ be the Galois closure of $f$, with Galois group $G$ of order $n$. Pick any non-trivial global section $s$ in $H^0(\bar X, \bar f^*\cF)$. Then $\bigotimes_{g\in G}g\cdot s$ gives a $G$-invariant global section of $H^0(\bar X, \bar f^*\cF^{\otimes n})$, and hence induces a global section of $\cF^{\otimes n}$.
\end{proof}

\begin{proof}[Proof of Theorem \ref{T:VZ}]
By construction, the lowest graded piece of $\cM_{f_0,\cL}$ is $f_*\cO_X$, which has a non-trivial global section (generating $\sO_Y$), and denote by $t$ its image in $\cN_{g'}$. By the inclusions obtained in Lemma \ref{lem:seqmAm}, we have that $t$ is not a trivial section. Let $k$ be the largest number, such that $\delta^k$ does not map $t$ to $0$. By Proposition \ref{P:gg for -N-A}, we have that $k\geq 1$. 
Hence $\delta^k$ maps $t$ to $\cC_{g'}\otimes \psi^*\Sym^k \Omega_Y(\log E^Y)$,
which induces a non-trivial morphism 
$$\cC_{g'}^\vee \to \psi^*\Sym^k \Omega_Y(\log E^Y).$$
Since $\psi^*\cA^{-1}\otimes \Sym^m\cC_{g'}^\vee$ is generically globally generated, for some $m$, we can find a non-trivial morphism 
$$\psi^*\cA \to \psi^*\Sym^{mk} \Omega_Y(\log E^Y).$$
Since $\psi$ is finite, we can find a positive integer $n$, such that we have a non-trivial morphism
$$\cA^n \to \Sym^{nmk} \Omega_Y(\log E^Y),$$
thanks to Lemma \ref{lm:decffinite}.
Since $\sA$ is a line bundle, the above non-trivial morphism is an inclusion. 
%By the construction of $\delta^k$, the above morphsim factors through $\Omega^{\otimes n}_Y(\log E^Y)$. 
We conclude the proof, by replacing $\cA$ by $\cA^n$. 
\end{proof}

\subsection{Refinement of Viehweg-Zuo sheaves}

We first denote 
$$\cQ_{(Y,D^Y)}:=\ker(\cT_Y(-\log D^Y)\to \cB^\vee_{(Y,D^Y)}).
$$
For a graded $\mathscr{A}^\bullet_{(Y,D^Y)}$-module $\cM$, to show that the Higgs-type morphism 
$$\cM \to \cM \otimes \Omega_Y(\log D^Y)$$
factors through 
$$\cM\to \cM \otimes \cB_{(Y,D^Y)},$$
we only need to show that $\cQ_{(Y,D^Y)}$ acts on $\cM$ trivially.
If we further have $\cM$ being $\cO_Y$-torsion-free, we only need to show that it acts trivially, over a dense open subset of $Y$. 

\begin{proof}[Proof of Theorem \ref{T:refined VZ}]
We first show that the higgs morphism (\ref{E:correct log pole})
$$\delta: \cN_{g'}\to \cN_{g'} \otimes \psi^*\Omega_Y(\log E^Y).
$$
factors through 
\begin{equation*}
    \delta_0: \cN_{g'}\to \cN_{g'} \otimes \psi^*\cB_{(Y, E^{Y})}.
\end{equation*}
Using the two stable families $f'_{2,c}$ and $f^\sharp_c$ constructed in Proposition \ref{P:stable reduction}, over $Y'$ and $Y^\sharp$, respectively, we have the induced moduli maps $\mu^\bullet: Y^\bullet\to \mathfrak{M}$, with $\bullet=', \sharp$. Hence, following Definition \ref{D:def of B}, we can similarly define $\cB_{(Y',D^{Y'})}$ and  $\cB_{(Y^\sharp,D^{Y^\sharp})}$, as well as $\cQ_{(Y', D^{Y'})}$ and $\cQ_{(Y^\sharp, D^{Y^\sharp})}$ as above.

Note that, since $\psi$ is finite, over a dense open subset of $Y'$, we have $\psi^*\cQ_{(Y, E^{Y})}= \cQ_{(Y', D^{Y'})}$.
Hence, we only need to show that $\cQ_{(Y', D^{Y'})}$ acts on $\cN_{g'}$ trivially, over some dense open subset of $Y'$. 

The isomorphism $\eta^*\cM_{g^\sharp,g^{\sharp*}\cA^\sharp}\simeq \cM_{g'_2,g'^{*}_2\cA'}$ implies that the kernel of the natural morphism 
$d\eta^*:\cT_{Y'}(-\log D^{Y'})\to \eta^*\cT_{Y^\sharp}(-\log D^{Y^\sharp})$ acts trivially on $\cM_{g'_2,g'^{*}_2\cA'}$, hence on $\cN_{g'}$. 
Since $f^\sharp$ is of maximal variation, we have $\Omega_{Y^\sharp}(\log D^{Y^\sharp})=\cB_{(Y^\sharp,D^{Y^\sharp})}$, and that the inclusion $\eta^*\Omega_{Y^\sharp}(\log D^{Y^\sharp})\subset \cB_{(Y',D^{Y'})}$ actually is an isomorphism, over a dense open subset of $Y'$. By taking the dual, we have that, over a dense open subset, $\cQ_{(Y',D^{Y'})} $ is isomorphic to the kernel of $d\eta^*$, and hence it acts trivially on $\cN_{g'}$.

Furthermore, we have that $\cC_{g'}$, the kernel of $\delta$, is also the kernel of $\delta_0$, since the induced morphism 
$$ \cN_{g'} \otimes \psi^*\cB_{(Y, E^{Y})}\to \cN_{g'} \otimes \psi^*\Omega_Y(\log E^Y)$$
is an inclusion, thanks to the fact that $\cN_{g'} \otimes \psi^*\cB_{(Y, E^{Y})}$ is torsion-free, as $\psi$ is flat. 

Now, we can apply the argument in the proof of Theorem \ref{T:VZ},
by replacing $\Omega_Y(\log E^Y)$ by $\cB_{(Y,E^Y)}$, and $\delta$ by $\delta^0$, from which, the proof is accomplished.
%Now we use the argument as the proof of Theorem \ref{T:VZ}, by replacing $\Omega_Y(\log E^Y)$ by $\cB_{(Y,E^Y)}$ in (\ref{E:correct log pole}).
\end{proof}

\bibliographystyle{alpha}
\bibliography{mybib}
\end{document}